\newtheorem{theorem}{Theorem}[section]
\newtheorem{lemma}[theorem]{Lemma}
\newtheorem{proposition}[theorem]{Proposition}
\newtheorem{corollary}[theorem]{Corollary}
\theoremstyle{definition}
\newtheorem{definition}[theorem]{Definition}
\newtheorem{example}[theorem]{Example}
\newtheorem{question}[theorem]{Question}
\newtheorem{remark}[theorem]{Remark}
\newcommand{\Tr}{\text{Tr}}
\newcommand{\id}{\text{id}}
\newcommand{\FPdim}{\text{FPdim}}
\newcommand{\Fun}{\text{Fun}}
\renewcommand{\Vec}{\text{Vec}}
\newcommand{\Hom}{\text{Hom}}
\newcommand{\Aut}{\text{Aut}}
\newcommand{\Rep}{\mathrm{Rep}}
\newcommand{\rev}{\text{rev}}
\newcommand{\op}{\text{op}}
\newcommand{\B}{\mathcal{B}}
\newcommand{\C}{\mathcal{C}}
\newcommand{\D}{\mathcal{D}}
\newcommand{\E}{\mathcal{E}}
\newcommand{\Z}{\mathcal{Z}}
\newcommand{\M}{\mathcal{M}}
\newcommand{\A}{\mathcal{A}}
\newcommand{\W}{\mathcal{W}}
\newcommand{\QM}{\mathcal{Q}\mathfrak{M}}
\newcommand{\FC}{\mathfrak{FC}}
\renewcommand{\O}{\mathcal{O}}
\newcommand{\be}{\mathbf{1}}
\newcommand{\fg}{{\mathfrak g}}
\newcommand{\fL}{{\mathfrak L}}
\newcommand{\g}{\mathfrak{g}}
\newcommand{\h}{\mathfrak{h}}
\renewcommand{\be}{\mathbf{1}}
\newcommand{\cS}{{\mathcal S}}
\newcommand{\CC}{{\mathbb{C}}}
\newcommand{\BR}{{\mathbb{R}}}
\newcommand{\BZ}{{\mathbb Z}}
\newcommand{\BQ}{{\mathbb Q}}
\newcommand{\bt}{\boxtimes}
\newcommand{\ot}{\otimes}
\newcommand{\kk}{\mathbb{k}}
\newcommand{\iso}{\buildrel{\sim}\over{\longrightarrow}}
\begin{document}
\title{The Witt group of non-degenerate braided fusion categories}

\author{Alexei Davydov}
\address{A.D.: Department of Mathematics and Statistics,
University of New Hampshire,  Durham, NH 03824, USA}
\email{alexei1davydov@gmail.com}
\author{Michael M\"uger}
\address{M.M.: Institute of Mathematics, Astrophysics and Particle Physics,
Radboud University, Nijmegen, The Netherlands}
\email{mueger@math.ru.nl}
\author{Dmitri Nikshych}
\address{D.N.: Department of Mathematics and Statistics,
University of New Hampshire,  Durham, NH 03824, USA}
\email{nikshych@math.unh.edu}
\author{Victor Ostrik}
\address{V.O.: Department of Mathematics,
University of Oregon, Eugene, OR 97403, USA}
\email{vostrik@uoregon.edu}

\begin{abstract}
We give a characterization of Drinfeld centers of fusion categories as non-degenerate
braided fusion categories containing a Lagrangian algebra. Further we study the quotient
of the monoid of non-degenerate braided fusion categories modulo the submonoid of the
Drinfeld centers and show that its formal properties are similar to those of the classical
Witt group.
\end{abstract}
\maketitle


\section{Introduction}
Tensor categories are ubiquitous in many areas of mathematics and it seems worthwhile 
to study them deeper. The simplest class of tensor categories is formed by so called
fusion categories (\cite{ENO1}, see \ref{fuscat} below for a definition). It is known (\cite{ENO1}) that 
over an algebraically closed field $\kk$ of characteristic zero there are only countably many 
equivalence classes of fusion categories and that the classification of these equivalence
classes is essentially independent from the field $\kk$ (namely, an embedding of fields 
$\kk \subset \kk'$ induces a bijection between the sets of equivalence classes of fusion categories 
over $\kk$ and over $\kk'$). Thus the classification of fusion categories seems to be 
a natural and interesting problem. This problem is very far from its solution at the moment.

An interesting additional structure that one might impose on a tensor category is a {\em braiding}
(\cite{JS2}). For a fusion category $\A$, its {\em Drinfeld center} $\Z(\A)$ is a braided fusion
category, see Section~\ref{dcenter}. Our first main result addresses the following question: when
is a braided fusion category $\C$ equivalent to the Drinfeld center of some fusion category?
The answer we give is as follows: $\C$ should be {\em non-degenerate} in the sense of \cite{DGNO}
and $\C$ should contain a {\em Lagrangian} algebra, that is, a  connected \'etale algebra of 
maximal possible size, see Section~\ref{qmp}. More precisely, we show that the 2-groupoid of fusion 
categories is equivalent to the 2-groupoid of {\em quantum Manin pairs}, where a quantum Manin
pair consists of a non-degenerate braided fusion category and a Lagrangian algebra in this category.
This result can be considered as (a step in) a reduction of the classification of
all fusion categories to the classification of braided fusion categories. 

The problem of classification of all braided fusion categories (even of non-degene\-rate ones)
seems to be very interesting but is almost as inaccessible as a classification of all fusion categories.
The second main result of this paper is an observation that there is an interesting algebraic
structure in this classification. Namely, we prove that the quotient of the monoid of non-degenerate
braided fusion categories by the submonoid of Drinfeld centers has formal properties similar to those
of the classical Witt group of the quadratic forms over a field. Moreover, we show that the Witt  group
of finite abelian groups endowed with a non-degenerate quadratic forms embeds naturally into
this quotient. Thus we call it  the {\em Witt group of non-degenerate braided
fusion categories} and consider its computation as a fundamental problem in the study of fusion
categories. Further we show that each Witt equivalence class contains a unique representative
which is {\em completely anisotropic} (Theorem~\ref{unica}); this result is a counterpart of the statement 
that in the classical Witt group each Witt class contains a unique anisotropic quadratic form.

An interesting subgroup of the Witt group is the {\em unitary Witt group} (see Definition \ref{Wun})
consisting of the classes of {\em pseudounitary} braided fusion categories.
A well known source of examples of pseudounitary braided fusion categories is the
representation theory of affine Lie algebras, see, e.g., \cite[Chapter 7]{BaKi}. Namely,
for any simple finite dimensional Lie algebra $\fg$ and a positive integer $k$ one constructs
a pseudounitary non-degenerate braided fusion category $\C(\fg,k)$ consisting of integrable
highest weight modules of level $k$ over the affinization of $\fg$.
We do not know any elements of the unitary
Witt group that are not in the subgroup generated by the classes $[\C(\fg,k)]$. It would be
very interesting to find out whether such elements exist.
The relations between the classes $[\C(\fg,k)]$ (or, more generally, between the classes 
of known braided fusion categories) are of great interest. 
By Corollary~\ref{easywitt}, any such relation produces at least one fusion category; one can hope
to construct new examples of fusion categories in this way (see \cite[Appendix]{CMS} for 
an example of this kind). In Section~\ref{vertex} we give examples of such relations using the theory of
{\em conformal embeddings} and {\em coset models} of central charge $c<1$. It would be
interesting to see whether other relations exist. At this moment even all relations between
the classes $[\C(sl(2),k)]$ are not completely known (see Section~\ref{sl2}).

This paper was written under the influence of Vladimir Drinfeld and Alexei Kitaev. 
We are deeply grateful to them for sharing their ideas with us. M.M.\ also thanks A.~Kitaev 
for two invitations to Microsoft's Station Q and to Caltech, respectively. V.O. is grateful 
to Zhenghan Wang for his interest in this work. The work of D.N.\ was partially supported by 
the NSF grant DMS-0800545. The work of V.O.\ was  partially supported by the NSF grant DMS-0602263.
Finally, we would like to thank the referee for his exceptionally thorough work that led to many
clarifications.

\section{Preliminaries}
Throughout this paper our base field $\kk$ is an algebraically closed field of characteristic zero.

\subsection{Fusion categories} 
\label{fuscat}

By definition (see \cite{ENO1}), a {\em multi-fusion category} over $\kk$
is a $\kk-$linear semisimple rigid tensor category with finitely many simple objects and
finite dimensional spaces of morphisms. A multi-fusion category is called a {\em fusion category}
if its unit object $\be$ is simple.  By a {\em fusion subcategory} of a fusion category 
we always mean a full tensor subcategory that is itself fusion (i.e.\ in particular rigid and semisimple.)
Let $\Vec$ denote the fusion category of finite dimensional vector spaces over $\kk$.
Any fusion category $\A$ contains a trivial fusion subcategory consisting of multiples
of $\be$. We will  identify  this subcategory with $\Vec$. A fusion category $\A$
is called {\em simple} if $\Vec$ is the only proper fusion subcategory of $\A$.

A fusion category is called {\em pointed} if all its simple objects are invertible.
For a fusion category $\A$ we denote  $\A_{pt}$ the maximal pointed fusion subcategory
of $\A$. We say that $\A$ is {\em unpointed} if $\A_{pt} =\Vec$.

We will denote $\A \boxtimes \B$ the tensor product of fusion categories $\A$ and $\B$. 
(Cf.\ \cite[Section 5]{De}. Under the assumptions of this paper, where $\kk$ is algebraically closed 
and $\A,\B$ semisimple, $\A\boxtimes\B$ can be obtained 
as the completion of the $\kk$-linear direct product $\A\otimes_\kk\B$ under direct sums and subobjects.)

For a  fusion category $\A$ we denote by $\O(\A)$ the set of
isomorphism classes of simple objects in $\A$.

Let $\A$ be a fusion category and let $K(\A)$ be its Grothendieck ring. 
There exists a unique ring
homomorphism $\FPdim: K(\A)\to \BR$ such that $\FPdim(X)>0$ for any $0\ne X\in \A$, see
\cite[Section 8.1]{ENO1}. (See also \cite[Section 9]{ENO1} for the observation that the results 
used below are independent of the ground field.)
For a fusion category $\A$ one defines (see \cite[Section  8.2]{ENO1}) its
{\em Frobenius-Perron dimension}:
\begin{equation}
\label{FPdim def}
\FPdim(\A)=\sum_{X\in \O(\A)}\,\FPdim(X)^2.
\end{equation}

For any object $X$ in $\A$ let $[X]$ denote the corresponding element of the
Grothendieck ring $K(\A)$.
One defines the (virtual) {\em regular object} of $\A$ by 
\begin{equation}
\label{reg object}
R_\A= \sum_{X\in \O(\A)}\,\FPdim(X)\,[X]\in K(\A)\ot_\mathbb{Z} \BR,
\end{equation}
see, e.g., \cite[Section 8.2]{ENO1}.
The regular object $R_\A$ has the following properties (see {\em loc.\ cit.}):
\begin{enumerate}
\item[(1)] $\FPdim(R_\A)=\FPdim(\A)$.
\item[(2)] $[X]R_\A=\FPdim(X)R_\A$ for any $X\in \A$;
\end{enumerate}
(The first is obvious. The second is a restatement of the fact that the the positive vector 
$(\FPdim(X_i))_i$ is the (unique up to a scalar) common FP eigenvector, with respect to the canonical basis 
$[X_j]$, of the commuting operators $[X]$ acting on $K(\A)\ot_\mathbb{Z} \BR$ by multiplication. The
proof only uses multiplicativity of the FP dimension. This also shows that $R_\A$ is actually
characterized by the properties (1) and (2).)

Let $\A_1,\, \A_2$ be fusion categories such that $\FPdim(\A_1) =\FPdim(\A_2)$.
By \cite[Proposition 2.19]{EO} any fully faithful tensor functor $F: \A_1 \to \A_2$
is an equivalence.

There is another notion of dimension $\A$, the {\em categorical} (or {\em global})
dimension defined as follows (see \cite{Mu-I}).  For each simple object $X$ in $\A$ pick an
isomorphism $a_X: X \xrightarrow{\sim} X^{**}$ and set 
\begin{equation}
\label{catdim def}
\dim(\A)=\sum_{X\in \O(\A)}\,|X|^2,
\end{equation} 
where $|X|^2 = \Tr_X(a_X) \Tr_{X^*}((a_X^{-1})^*)$. By \cite[Theorem 2.3]{ENO1},
$\dim(\A)$ is a non-zero element in $\kk$.

A fusion category $\A$ over $\kk =\mathbb{C}$ is called {\em pseudo-unitary} if $\dim(\A)=\FPdim(\A)$, 
see \cite[Section~ 8.4]{ENO1}. A pseudo-unitary  fusion category $\A$ has a unique spherical structure 
such that the categorical dimension $\dim(X)$ of any object $X$ in $\A$
equals $\FPdim(X)$, see \cite[Proposition 8.23]{ENO1}.  It is easy to see that if $\A_1$ and $\A_2$ are 
pseudo-unitary then so is  $\A_1\bt \A_2$.

\subsection{Braided fusion categories}  

A {\em braided} fusion category  is a fusion category $\C$
endowed with a braiding $c_{X,Y}: X\ot Y\xrightarrow{\sim} Y\ot X$, see \cite{JS2}. 
For a braided fusion category its {\em reverse} $\C^{\rev}$ 
is the same fusion category with a new braiding
$\tilde{c}_{X,Y}=c_{Y,X}^{-1}$. A braided fusion category is {\em symmetric} if
$\tilde{c} = c$.

Recall from \cite{Mu2} that
objects $X$ and $Y$ of a braided fusion category $\C$ are said to
{\em centralize\,}  each other if
\begin{equation} \label{monodromy-drinf}
c_{Y,X}\circ c_{X,Y} =\id_{X\ot Y}.
\end{equation}
The {\em centralizer\,} $\D'$ of a fusion subcategory $\D\subset\C$ is defined to
be the full subcategory of objects of $\C$ that centralize each object of $\D$.
It is easy to see that $\D'$ is a fusion subcategory of $\C$.
Clearly, $\D$ is symmetric if and only if $\D\subset\D'$.

\begin{definition}
\label{nondegdef}
 (see \cite[Definition 2.28 and Proposition 3.7]{DGNO})
We will say that a  braided fusion category $\C$ is {\em non-degenerate}
if $\C'=\Vec$.
\end{definition}

A non-degenerate braided fusion category $\C\neq \Vec$ is {\em prime} if it has no 
proper non-degenerate braided fusion subcategories other than $\Vec$.
Clearly, a non-trivial simple  braided fusion category is prime.

For a fusion subcategory $\D$ of a non-degenerate braided fusion category $\C$ one 
has the following properties, cf.\ \cite[Theorems 3.10, 3.14]{DGNO}:
\begin{gather}
\D''=\D ,\\
\FPdim(\D)\FPdim(\D')=\FPdim(\C).
\end{gather}

A {\em pre-modular} category is a braided fusion category equipped with a spherical structure. 
A pre-modular category $\C$ is {\em modular} (i.e., its $S$-matrix is invertible) if and only
if $\C$ is non-degenerate \cite[Proposition 3.7]{DGNO}. (Cf.\ also \cite{Mu2}.)

The following statement is well known. We include its proof for the reader's convenience.

\begin{proposition}
\label{needed reference}
Let $\C \neq \Vec$ be a non-degenerate braided fusion category. Then
\begin{equation}
\label{tens product}
\C = \C_1\boxtimes \cdots \bt \C_n,
\end{equation}
where $\C_1,\dots, \C_n$  are prime non-degenerate subcategories of $\C$. 
Furthermore, if $\C$ is unpointed  
then  its decomposition \eqref{tens product} into a tensor product of prime non-degenerate subcategories
is unique up to a permutation of factors.
\end{proposition}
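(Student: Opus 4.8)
I would prove existence and uniqueness separately, each by induction.

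\emph{Existence.} Induct on the number $|\O(\C)|$ of isomorphism classes of simple objects. If $\C$ is prime, take $n=1$. Otherwise choose a non-degenerate braided fusion subcategory $\D$ with $\Vec\neq\D\neq\C$. Then $\D'$ is again non-degenerate (its Müger center $\D'\cap\D''=\D'\cap\D$ equals that of $\D$, hence is $\Vec$) and $\D'\neq\Vec$, since otherwise $\D=\D''=\Vec'=\C$. As $\D'$ centralizes $\D$ and $\D\cap\D'=\Vec$, the canonical functor $\D\boxtimes\D'\to\C$ is fully faithful, and since $\FPdim(\D)\FPdim(\D')=\FPdim(\C)$ it is an equivalence by \cite[Proposition 2.19]{EO}; thus $\C\simeq\D\boxtimes\D'$. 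Both $\D$ and $\D'$ have strictly fewer simple objects than $\C$ (the two counts are at least $2$ and multiply to $|\O(\C)|$), so the inductive hypothesis applies to each; concatenating the resulting prime decompositions yields \eqref{tens product}.

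\emph{Uniqueness.} Assume $\C$ is unpointed. A fusion subcategory of an unpointed fusion category is unpointed, so in any prime decomposition $\C=\C_1\boxtimes\cdots\boxtimes\C_n$ each factor $\C_i$ is unpointed, hence (being $\neq\Vec$) possesses a simple object of Frobenius--Perron dimension $>1$. The statement reduces to the following assertion: \emph{if $\C=\B\boxtimes\B'$ is non-degenerate and $\D\subseteq\C$ is a prime, unpointed braided fusion subcategory, then $\D\subseteq\B$ or $\D\subseteq\B'$.} Granting it, let $\C=\C_1\boxtimes\cdots\boxtimes\C_n=\D_1\boxtimes\cdots\boxtimes\D_m$ be two prime decompositions and induct on $n$. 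Writing $\C=\D_1\boxtimes(\D_2\boxtimes\cdots\boxtimes\D_m)$ and iterating the assertion, $\C_1\subseteq\D_j$ for some $j$; since $\D_j$ is prime and $\C_1$ is a non-degenerate subcategory $\neq\Vec$, primality forces $\C_1=\D_j$. Taking centralizers in $\C$ gives $\C_2\boxtimes\cdots\boxtimes\C_n=\C_1'=\D_j'$, the $\boxtimes$-product of the $\D_i$ with $i\neq j$; this is an unpointed non-degenerate category with a prime decomposition into $n-1$ factors, so by induction its two decompositions agree up to permutation, whence so do the original ones. If $n=1$, then $m\geq 2$ would make $\D_1$ a proper non-degenerate subcategory $\neq\Vec$ of the prime category $\C$, which is impossible, so $m=1$ and $\C_1=\D_1$.

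\emph{Proof of the reduced assertion.} Set $\E_1=\D\cap\B$ and $\E_2=\D\cap\B'$; they centralize each other and $\E_1\cap\E_2\subseteq\B\cap\B'=\Vec$. If $\E_1=\D$ then $\D\subseteq\B$, and likewise for $\E_2$; so assume both $\E_i$ are proper in $\D$. The crux is that each $\E_i$ is nevertheless \emph{non-degenerate}; granting this, primality of $\D$ forces $\E_1=\E_2=\Vec$, and I obtain a contradiction as follows. In $\C\simeq\B\boxtimes\B'$ every simple object has the form $P\boxtimes Q$ with $P\in\O(\B)$, $Q\in\O(\B')$. For $X=P\boxtimes Q\in\O(\D)$ we have $X\otimes X^*=(P\otimes P^*)\boxtimes(Q\otimes Q^*)\in\D$. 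Since the unit object appears in $P\otimes P^*$ with multiplicity one, write $P\otimes P^*=\be\oplus A$ with $A$ having no unit summand; then the summand $A\boxtimes\be$ of $X\otimes X^*$ lies in $\D\cap\B=\Vec$, which forces $A=0$, i.e.\ $P$ is invertible. Symmetrically $Q$ is invertible, so $X$ is invertible. As $X$ was arbitrary, $\D=\D_{pt}=\Vec$, contradicting that $\D$ is prime. Hence $\E_1=\D$ or $\E_2=\D$.

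\emph{Main obstacle.} The hard step is exactly the non-degeneracy of $\E_1=\D\cap\B$ (and of $\E_2$): it is not a formal consequence of lattice considerations — the lattice of fusion subcategories of $\C$ need not be distributive — and genuinely uses the non-degeneracy of $\B$ and $\B'$ together with the rigidity of the Deligne decomposition. From $\D''=\D$ and $\B''=\B$ one gets $(\D'\vee\B')'=\D\cap\B$, hence $(\D\cap\B)'=\D'\vee\B'$; this subcategory contains $\B'$, and any fusion subcategory of $\B\boxtimes\B'$ containing $\B'$ is of the form $\H\boxtimes\B'$ for a unique $\H\subseteq\B$. Intersecting, the Müger center of $\D\cap\B$ equals $(\D\cap\B)\cap\H$, a fusion subcategory of $\B$, and I would show it is $\Vec$ by pushing the identities $\FPdim(\mathcal G)\FPdim(\mathcal G')=\FPdim(\C)$ and $\mathcal G''=\mathcal G$ through the product decomposition $\B\boxtimes\B'$. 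Everything else in the argument is the bookkeeping carried out above.
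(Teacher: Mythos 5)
Your existence argument is correct (it in effect re-proves \cite[Theorem 4.2]{Mu2}, resp.\ \cite[Theorem 3.13]{DGNO}, which the paper simply cites), and the skeleton of your uniqueness argument --- reduction to the assertion that a prime unpointed $\D\subseteq\B\boxtimes\B'$ lies in one of the two factors, followed by induction with cancellation by centralizers --- is sound. The genuine gap is exactly the step you defer: non-degeneracy of $\E_1=\D\cap\B$. This cannot be supplied by ``pushing $\mathcal{G}''=\mathcal{G}$ and $\FPdim(\mathcal{G})\FPdim(\mathcal{G}')=\FPdim(\C)$ through the product decomposition'': those identities hold for \emph{every} fusion subcategory of a non-degenerate category and never see the hypothesis that $\D$ is unpointed, while the statement you need is false without that hypothesis. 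Concretely, take $\B=\C(\BZ/4\BZ,q)$ and $\B'=\C(\BZ/4\BZ,q^{-1})$ with $q(a)=e^{\pi i a^{2}/4}$, and let $\D\subset\B\boxtimes\B'$ be the pointed subcategory supported on the cyclic subgroup of order $4$ generated by $(1,2)$. Then $\D$ is non-degenerate and prime (its only proper nontrivial fusion subcategory, supported on $(2,0)$, is the category of super vector spaces, hence degenerate), $\D$ is contained in neither factor, and $\D\cap\B$ is precisely that symmetric subcategory: nontrivial and degenerate. All the centralizer and dimension identities of your sketch are satisfied here, so any correct proof of your crux must use unpointedness of $\D$ in an essential way; note also that unpointedness alone does not kill a symmetric subcategory of $\D$, since symmetric fusion categories need not be pointed.

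The input that closes the hole is the one the paper uses, and your own contradiction computation already contains its germ. For every simple $X=P\boxtimes Q\in\D$, the summands $A\boxtimes\be$ and $\be\boxtimes B$ of $X\ot X^{*}=(P\ot P^{*})\boxtimes(Q\ot Q^{*})$ (where $P\ot P^{*}=\be\oplus A$, $Q\ot Q^{*}=\be\oplus B$) lie in $\D$, hence in $\E_1$ and $\E_2$, \emph{without} assuming anything about the $\E_i$; therefore $\D_{ad}\subseteq\E_1\vee\E_2$. The missing ingredient is \cite[Corollary 3.27]{DGNO}: for non-degenerate $\D$ one has $\D_{ad}=(\D_{pt})'\cap\D$, so unpointedness gives $\D_{ad}=\D$. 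Hence $\D=\E_1\vee\E_2\simeq\E_1\boxtimes\E_2$ (the two pieces centralize each other and meet in $\Vec$, so your Hom-dimension count makes the comparison functor an equivalence onto $\E_1\vee\E_2$), each $\E_i$ is automatically non-degenerate because its M\"uger centre lies in $\D\cap\D'=\Vec$, and primality of $\D$ forces $\E_1=\D$ or $\E_2=\D$. This is precisely the paper's argument, which shows directly that $\D_{ad}$, hence $\D$, factorizes along $\C_1\boxtimes\cdots\boxtimes\C_n$, so that every prime non-degenerate subcategory of an unpointed $\C$ equals some $\C_i$. If you run your $X\ot X^{*}$ computation \emph{before} (rather than after) assuming $\E_1=\E_2=\Vec$ and add the citation above, your proof closes and essentially coincides with the paper's.
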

\begin{proof}
Existence of the decomposition \eqref{tens product} is established in \cite[Theorems 4.2, 4.5]{Mu2} for 
modular categories. Up to one argument that requires generalization, given by \cite[Theorem 3.13]{DGNO},
the same proof works for non-degenerate fusion categories.

It remains to prove uniqueness.  If $\D\subset \C$ is a fusion subcategory, let $\D_i \subset \C_i$
be the fusion subcategory generated by all simple objects $X_i\in \C_i$ such that 
there is a simple $X =X_1\bt \cdots \bt X_i \bt \cdots \bt X_n \in \D$. Clearly we have
$\D\subset\D_1\boxtimes\cdots\boxtimes\D_n$, but the converse need not hold. If it does, we say that $\D$ 
factorizes. Denoting by
$\D_{ad}$ the fusion subcategory of $\D$ generated by $X\ot X^*$, where $X$ runs through simple objects of $\D$,
the fact that 
$X\ot X^*=(X_1\ot X_1^*)\boxtimes\cdots\boxtimes(X_n\ot X_n^*)$ has
$\be\boxtimes\cdots\boxtimes\be\boxtimes(X_i\otimes X_i^*)\boxtimes\be\boxtimes\cdots\boxtimes\be$
as direct summand for each $i$ implies that $\D_{ad}\supset(\D_{ad})_i$, thus $\D_{ad}$ factorizes. 
Let $\D\subset \C$  be a non-degenerate fusion subcategory. Since $\C$ is unpointed, i.e., $\C_{pt} =\Vec$, $\D$ is
unpointed and by   \cite[Corollary 3.27]{DGNO} we have $\D_{ad} =(\D_{pt})'\cap \D = \D$. Thus $\D$ factorizes, i.e.\
$\D = \D_1\bt \cdots \bt  \D_n$,  where each $\D_i$ is non-degenerate.
Since $\C_i$ is prime, we must have either $\D_i=\Vec$
or $\D_i =\C_i$  for each $i=1,\dots,n$.  In particular, every prime non-degenerate fusion subcategory 
$\D \subset \C$ coincides with some $\C_i$. Hence,  \eqref{tens product} is unique up to a permutation of factors.
%
%
%
\end{proof}

\begin{remark} The proof actually also shows the following stronger result: If $\D\subset\C$ is an unpointed
and non-degenerate fusion subcategory then $\D = \D_1\bt \cdots \bt  \D_n$,  where each $\D_i$ is either 
$\D_i=\Vec$ or $\D_i =\C_i$. This means that the prime factors $\C_i$ that are unpointed appear in every 
prime factorization of $\C$, whether or not $\C$ itself is unpointed.
\end{remark}

\subsection{Drinfeld center of a fusion category} 
\label{dcenter}
For any fusion category $\A$ its {\em Drinfeld center}
$\mathcal{Z}(\A)$ is defined as the category whose objects are
pairs $(X, \gamma_X)$, where $X$ is an object of $\A$ and
$\gamma_X : V\ot X
\simeq X \ot V$, $V\in\A$
is a natural family of isomorphisms, satisfying a certain
compatibility condition, see \cite[Definition 3]{JS1} or
\cite[Defini\-tion~XIII.4.1]{Ka}.
It is known that $\Z(\A)$ is a non-degenerate braided fusion category and that
\begin{equation}
\label{dimZ}
\dim(\Z(\C))=\dim(\C)^2, \quad \FPdim(\Z(\C))=\FPdim(\C)^2.
\end{equation}
(See \cite[Theorems 3.16, 4.14, Proposition 5.10]{Mu4} 
for $\C$ semisimple spherical and 
\cite[Theorem 2.15, Proposition 8.12]{ENO1}, \cite[Corollary 3.9]{DGNO} for $\C$ fusion.)

For a braided fusion category $\C$ there are two braided functors 
\begin{eqnarray}
\C \to \Z(\C) &:& X\mapsto (X,\, c_{-,X}),\label{C_ZC}\\
\C^{\rev} \to \Z(\C) &:& X\mapsto  (X,\, \tilde c_{-,X}).
\end{eqnarray}
These functors are fully faithful and so we can identify
$\C$ and $\C^{\rev}$ with their images in $\Z(\C)$. These images 
centralize each other, i.e., $\C' =\C^{\rev}$. (Cf.\ \cite[Proposition 7.3]{Mu4}.)
This allows to define a braided tensor functor 
\begin{equation}
\label{Gfun}
G: \C \boxtimes \C^{\rev}\to \Z(\C).
\end{equation}
It was shown in \cite[Theorem 7.10]{Mu4} and \cite[Proposition 3.7]{DGNO}
that $G$ is a braided equivalence if and only if $\C$ is
non-degenerate. 

Let $\C$ be a braided fusion category and let $\A$ be a fusion
category. 

\begin{definition}
\label{def centfun} 
If $F: \C \to \A$ is a tensor functor, a structure of a {\em central functor}
on $F$ is a braided tensor functor $F':\C\to\Z(\A )$ whose composition with the forgetful functor
$\Z(\A)\to\A$ equals $F$.
\end{definition}

Equivalently, a structure of central functor on $F$ is a natural family of isomorphisms
$Y\ot F(X) \xrightarrow{\sim} F(X)\ot Y$, $X\in\C$, $Y\in\A$, satisfying certain compatibility conditions,
see \cite[Section  2.1]{Be}.

\subsection{Separable algebras} 

Let $\A$ be a fusion category. In this paper an {\em algebra} $A\in \A$
is an associative algebra with unit, see e.g., \cite[Definition 3.1]{O}.

\begin{definition}
\label{defsep}
An algebra $A\in \A$ is said to be {\em separable} if the
multiplication morphism $m: A\ot A\to A$ splits as a morphism of $A$-bimodules.
\end{definition}

\begin{remark}
\begin{enumerate}
\item[(i)] The morphism $m$ is surjective (due to the existence of unit in $A$), so the definition
makes sense.
\item[(ii)] Observe that if $F:\A \to \B$ is a tensor functor then $F(A)\in \B$ is a separable algebra for
a separable algebra $A\in \A$.
\end{enumerate}
\end{remark}

For an algebra $A\in \A$ let $\A_A$, ${}_A\A$, ${}_A\A_A$ denote, respectively, abelian categories
of right $A-$modules, left $A-$modules, $A-$bimodules, see e.g., \cite[Definition 3.1]{O}.

\begin{proposition} 
\label{semis eq}
For an algebra $A\in \A$ the following conditions are equivalent:
\begin{enumerate}
\item[(i)]  $A$ is separable;
\item[(ii)]  the category $\A_A$ is semisimple;
\item[(iii)] the category ${}_A\A$ is semisimple;
\item[(iv)] the category ${}_A\A_A$ is semisimple.
\end{enumerate}
\end{proposition}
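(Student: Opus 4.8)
The plan is to prove the chain of equivalences by establishing $(i)\Leftrightarrow(iv)$ directly and then deducing $(i)\Leftrightarrow(ii)$ and $(i)\Leftrightarrow(iii)$ by essentially the same argument applied to one-sided modules, which are module categories over $\A$ (or $\A^{\op}$) in the sense that $\A_A$ is a left $\A$-module category via $X\otimes(-)$. The key technical input throughout is that in a fusion category, which is in particular $\kk$-linear abelian with finite-dimensional $\Hom$-spaces, an abelian category of the form $\A_A$ (resp. ${}_A\A$, resp. ${}_A\A_A$) is automatically finite, has enough projectives, and every object has finite length; so \emph{semisimplicity is equivalent to the statement that every short exact sequence of modules splits}, equivalently that every module is projective.

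First I would treat $(i)\Rightarrow(iv)$. Assume $m\colon A\otimes A\to A$ splits as a bimodule map, say $e\colon A\to A\otimes A$ with $m\circ e=\id_A$. The object $A\otimes A$, viewed as an $A$-bimodule, is a direct summand of a free bimodule (indeed it \emph{is} the free $A$-bimodule on the unit object $\be$), and more generally for any $A$-bimodule $M$ the object $A\otimes M\otimes A$ is a free, hence projective, $A$-bimodule, and the multiplication $A\otimes M\otimes A\to M$ is a surjection of bimodules. The separability idempotent $e$ lets one build a bimodule splitting of $A\otimes M\otimes A\to M$: explicitly, the composite $M\xrightarrow{\ \cong\ } A\otimes_A M\otimes_A A \xrightarrow{e\otimes\id\otimes e}$-type construction, or more cleanly, $M\to A\otimes M\otimes A$ given on elements by $m\mapsto e(1)\otimes m\otimes e(1)$ (suitably interpreted categorically via $e$ and the unit/multiplication), is a bimodule map splitting the action. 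Hence every $A$-bimodule is a direct summand of a projective, so projective, so ${}_A\A_A$ is semisimple. The converse $(iv)\Rightarrow(i)$ is immediate: if ${}_A\A_A$ is semisimple then the surjection $m\colon A\otimes A\to A$ of bimodules splits.

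Next, for $(i)\Leftrightarrow(ii)$ and $(i)\Leftrightarrow(iii)$, I would argue as follows. If $A$ is separable, then for any right $A$-module $M$, the canonical surjection $M\otimes A\to M$ (action) is a map of right $A$-modules, $M\otimes A$ is free hence projective, and the separability idempotent $e\colon A\to A\otimes A$ produces a right-$A$-module splitting $M\to M\otimes A$ (apply $\id_M\otimes e$ after identifying $M\cong M\otimes_A A$, then use $m$ on the first two $A$-factors appropriately). Thus every right $A$-module is projective and $\A_A$ is semisimple; this gives $(i)\Rightarrow(ii)$, and $(i)\Rightarrow(iii)$ is the mirror-image statement. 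For the converses, $(ii)\Rightarrow(i)$: the multiplication $m\colon A\otimes A\to A$ is in particular a surjection of right $A$-modules, so if $\A_A$ is semisimple it admits a right-$A$-module splitting $s\colon A\to A\otimes A$; one then shows that $s$ can be modified to a \emph{bimodule} splitting. The standard trick: given a right-module splitting $s$, the element $\sum$-type average $A\to A\otimes A$, $a\mapsto \sum a_{(1)}\, s(a_{(2)})$... more precisely $e := (m\otimes\id)\circ(\id\otimes s)\circ \Delta$-flavoured construction — but since $A$ need not be a coalgebra, the correct categorical statement is that a one-sided splitting of $m$ can always be symmetrized to a two-sided one because $m$ is itself an algebra map's section; this is the classical argument that one-sided separability implies two-sided separability. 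Similarly $(iii)\Rightarrow(i)$.

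The main obstacle I anticipate is the converse directions $(ii)\Rightarrow(i)$ and $(iii)\Rightarrow(i)$: passing from a \emph{one-sided} module splitting of $m$ to a genuine \emph{bimodule} splitting. The cleanest route is probably to observe that ${}_A\A_A$ can be identified with right $A^{\op}\otimes A$-modules inside $\A\boxtimes\A^{\rev}$ (or with $A\otimes A^{\op}$-modules), so that separability of $A$ in $\A$ is equivalent to projectivity of $A$ as a module over $A^{\op}\otimes A$; one then shows that $\A_A$ semisimple forces $A^{\op}\otimes A$ to have semisimple module category as well — using that $\A_A$ semisimple implies $A$ is a "semisimple algebra" in the sense that its category of modules is, and that this property is inherited by $A\otimes A^{\op}$ via the fact that $(A\otimes A^{\op})$-$\mathrm{Mod}\simeq \A_A$-bimodule-type structures, or directly via the tensor-product-of-semisimple-is-semisimple phenomenon over the algebraically closed field $\kk$. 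Concretely I would reduce to: \emph{if $\A_A$ is semisimple, then so is $\A_{A\otimes A^{\op}}$}, after which $(iv)$ and hence $(i)$ follow from the already-proven $(iv)\Leftrightarrow(i)$. Establishing that reduction carefully — keeping track of the braiding/opposite-algebra bookkeeping in $\A\boxtimes\A^{\rev}$ — is where the real work lies; everything else is the formal "projective $\Leftrightarrow$ semisimple in a finite abelian category" dictionary together with the elementary manipulation of separability idempotents.
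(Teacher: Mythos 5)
Your forward directions are fine and essentially coincide with the paper's: the separability idempotent exhibits $M\cong M\ot_A A$ as a bimodule direct summand of the free (hence projective) module $M\ot A$ (resp.\ $A\ot M\ot A$), giving (i)$\Rightarrow$(ii),(iii),(iv), and (iv)$\Rightarrow$(i) is immediate. The genuine content of the proposition is the converse (ii)$\Rightarrow$(i) (equivalently (ii)$\Rightarrow$(iv)), and this is exactly where your proposal has a gap. Your first idea --- take a right-$A$-module splitting of $m$ supplied by semisimplicity of $\A_A$ and ``symmetrize'' it --- cannot work: a one-sided module splitting of $m$ exists for \emph{every} algebra (e.g.\ $u_A\ot\id_A:A\to A\ot A$ splits $m$ as right $A$-modules), so semisimplicity of $\A_A$ gives no new splitting, and there is no formal procedure upgrading a one-sided splitting to a bimodule one. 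Indeed no characteristic-free formal argument can exist: as the paper itself points out in the remark following Lemma~\ref{centraletale}, for $\A=\Vec_G$ over a field of characteristic $p$ dividing $|G|$ and $A$ the regular algebra one has $\A_A\simeq\Vec$ semisimple while ${}_A\A_A\simeq\Rep(G)$ is not, so $A$ is not separable; any correct proof of (ii)$\Rightarrow$(i) must use characteristic zero (or the fusion-category machinery available over such fields).

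Your fallback reduction is also not set up correctly and, as you concede, its key step is left unproven. In a plain (non-braided) fusion category $\A$ there is no braiding with which to make ``$A^{\op}\ot A$'' an algebra in $\A$, and ``$\A^{\rev}$'' is undefined; the correct formulation uses $\A$ as a module category over $\A\boxtimes\A^{\op}$ with ${}_A\A_A$ the category of modules over $A\boxtimes A^{\op}$ in that module category --- but then deducing its semisimplicity from that of $\A_A$ is precisely the nontrivial point, and the classical fact that a tensor product of semisimple algebras over an algebraically closed field is semisimple does not transfer formally. The paper closes this gap with a different input: ${}_A\A_A$ is identified with the category of $\A$-module endofunctors of the semisimple module category $\A_A$ (\cite[Remark 4.2]{O}), and \cite[Theorem 2.16]{ENO1} asserts that the dual category of a fusion category with respect to a semisimple module category is again semisimple; this gives (ii)$\Rightarrow$(iv) and (iii)$\Rightarrow$(iv), after which (iv)$\Rightarrow$(i) finishes the proof. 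To repair your write-up, replace your converse step by this module-functor argument (or prove the semisimplicity of the relevant dual/endofunctor category directly, which is where the characteristic-zero hypothesis genuinely enters).
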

\begin{proof} Assume that $A$ is separable. Note that $A$ considered as a  bimodule over itself 
is a direct summand of the $A-$bimodule $A\ot A$. Thus any $M=M\ot_AA\in \A_A$ is a direct summand of $M\ot_AA\ot A=M\ot A$.
The object $M\ot A\in \A_A$ is projective (see e.g. \cite[Section  3.1]{O}). Thus any $M\in \A_A$ is
projective and we have implication (i)$\Rightarrow$(ii). The implication (i)$\Rightarrow$(iii) is
proved similarly.

The implications (ii)$\Rightarrow$(iv) and (iii)$\Rightarrow$(iv) follow from \cite[Theorem 2.16]{ENO1}
and \cite[Remark 4.2]{O}. Finally, the implication (iv)$\Rightarrow$(i) is obvious.
\end{proof}

Let $\C$ be a braided fusion category. Recall that an algebra $A$ in $\C$ is called
{\em commutative} if $m \circ c_{A,A} =m$, where $m: A\ot A \to A$ is the multiplication
of $A$, see e.g., \cite[Definition 1.1]{KiO}.

\begin{example}
\label{regular}
Let $G$ be a finite group and let $\A =\Rep(G)$ be the fusion category of finite
dimensional representations of $G$. Let $A=\Fun(G)$ be the algebra of $\kk-$valued functions on $G$.
The group $G$ acts on $A$ via left translations, so $A$ can be considered as a
commutative  algebra in $\A$.
The algebra $A$ is called the {\em regular} algebra of the category $\A=\Rep(G)$. 
Associating to $f\in A$ the function $\mu(f): G\times G\rightarrow\kk, (g,h)\mapsto \delta_{g,h}f(g)$,
easy computations show that $\mu:A\rightarrow A\otimes A$ is a splitting of 
$m:A\otimes A\rightarrow A$ and a bimodule map. Thus $A$ is separable. (Cf.\ \cite[p.\ 227]{Br} for
a similar argument.)

More generally we say that a braided fusion category $\E$ is {\em Tannakian} \cite{De} if there is a
braided equivalence $F: \E \simeq \Rep(G)$; in this case the algebra $F^{-1}(A)$ (with $A\in \Rep(G)$
as above) is called a regular algebra $A_\E$ of $\E$. It is known that the algebra $A_\E$ is unique
up to isomorphism. (Such an isomorphism is non-unique, in particular $\Aut\,A_\E\cong G$.) 
See, e.g., \cite[Section 2.13]{DGNO}. 
\end{example}

\subsection{Equivariantization and de-equivariantization}
\label{eq and deeq}

Let $\A$ be a fusion category with an action of a finite 
group $G$. In this case one can define the fusion category $\A^G$ of 
$G$-equivariant objects in $\A$. 
Objects of this category are objects $X$ of $\A$ equipped with 
an isomorphism $u_g: g(X)\to X$ for all $g\in G$, such that
$$
u_{gh}\circ \gamma_{g,h}=u_g\circ g(u_h),
$$
where $\gamma_{g,h}: g(h(X))\to gh(X)$ is the natural isomorphism
associated to the action. Morphisms and tensor
product of equivariant objects 
are defined in an obvious way. This category is called
the {\em $G$-equivariantization} of $\A$. One has 
$\FPdim(\A^G)=|G|\FPdim(\A)$.  See \cite{Br, Mu3} and \cite[Section 4]{DGNO}
for details.

\begin{example} 
\label{GHequiv}
Let $H$ be a normal subgroup of $G$. Then there is a natural action of $G/H$ on
$\A^H$ and $(\A^H)^{G/H}\cong \A^G$.
\end{example}

There is a procedure opposite to equivariantization, called the 
{\em de-equivariantiza\-tion}.
Namely, let $\A$ be a fusion category  and let
$\E= \Rep(G) \subset \Z(\A)$  be a Tannakian subcategory which embeds into $\A$
via the forgetful functor $\Z(\A)\to \A$. Let $A=\mbox{Fun}(G)$ be the regular algebra of $\E$.
It is a separable commutative algebra in $\Z(\A)$ and so
the category $\A_G$ of left $A$-modules in $\A$ is a
fusion category  with the tensor product $\ot_A$, 
called {\em de-equivariantization} of $\A$. 
One has $\FPdim(\A_G) = \FPdim(\A)/ |G|$.

The above constructions are canonically inverse to each other, i.e., there are
canonical equivalences $(\A_G)^G \cong \A$ and $(\A^G)_G \cong \A$,
see \cite[Section 4.2]{DGNO}.

\subsection{Module categories over fusion categories} \label{dualmcat}

Let $\A$ be a fusion category.
A left {\em $\A$-module category} is a finite semisimple Abelian  $\kk$-linear
category $\M$ together with  a bifunctor $\ot: \A \times \M \to \M$ and a natural family
of isomorphisms 
\[
(X\ot Y) \ot M  \xrightarrow{\sim}  X\ot (Y \ot M)
\quad  \mbox{and} \quad
\be\ot M  \xrightarrow{\sim}  M
\]
for $X,\,Y\in \A, \, M\in \M$, satisfying certain coherence
conditions. See \cite{O} for details and for the definitions of
$\A$-module functors and their natural transformations.
A typical example of a left $\A$-module category is the category $\A_A$
of  right modules over a separable algebra $A$ in $\A$ \cite{O}.
An $\A$-module category is called {\em indecomposable} if it is not
equivalent to a direct sum of two non-trivial $\A$-module categories.

The category of $\A$-module endofunctors of a right $\A$-module category
$\M$ will be denoted by $\A^*_\M$.  It is known that $\A^*_\M$
is a multi-fusion category, see \cite[Theorem 2.18]{ENO1} (it
is a fusion category if and only if $\M$ is indecomposable).

Let $\M$ be an indecomposable right $\A$-module category.
We can regard $\M$ as an $(\A^*_\M,\A)$-bimodule category.
Its $(\A^*_\M, \A)$-bimodule endofunctors can be identified, on the one hand,
with functors of left multiplication by objects of  $\Z(\A_\M^*)$, and on the other hand,
with functors of right multiplication by objects of $\Z(\A)$. Combined, these identifications
yield a canonical equivalence of braided categories
\begin{equation}
\label{Schauenburg}
\Z(\A)\xrightarrow{\sim} \Z(\A_\M^*).
\end{equation}
This result is due to Schauenburg, see  \cite{Sch}.

\section{\'Etale algebras and central functors}

\subsection{\'Etale algebras in braided fusion categories}

\begin{definition} 
\label{etale}
An algebra $A\in \C$ is said to be {\em \'etale} if
it is both commutative and separable.
We say that an \'etale algebra $A\in \C$ is {\em connected} if $\dim_\kk \Hom_\C(\be,\, A)=1$.
\end{definition}

\begin{remark} \label{disconn}
\begin{enumerate}
\item[(i)] The terminology of Definition~\ref{etale} is justified by the fact that  \'etale algebras 
in the usual sense can be characterized by the property from Definition~\ref{etale}.
\item[(ii)] Any \'etale algebra canonically decomposes as a direct sum of connected ones.
\end{enumerate}
\end{remark}

\begin{example}\label{regularce}
\begin{enumerate}
\item[(i)]  Let $\E \subset \C$ be a Tannakian subcategory. Then a regular algebra
$A_\E\in \C$ (see Example \ref{regular}) is connected \'etale.
\item[(ii)] Let $\C$ be a pre-modular category. Let $A$ be a commutative algebra  in
$\C$ such that $\dim_\kk \Hom_\C(\be ,A)=1$, the pairing $A\otimes A\xrightarrow{m}
A\twoheadrightarrow\be$ is non-degenerate, $\theta_A=\id_A$, and $\dim(A)\ne 0$. It is proved
in \cite[Theorem 3.3]{KiO} that such an $A$ is connected \'etale.
\end{enumerate}
\end{example}

\begin{remark} \label{etsd}
In general if $A\in \C$ is a connected \'etale algebra and $A\twoheadrightarrow\be$ is 
a nonzero homomorphism (it is unique up to a scalar) then the pairing $A\ot A\xrightarrow{m}
A\twoheadrightarrow\be$ is non-degenerate. Indeed the kernel of this pairing would be a non-trivial
ideal of $A$ (= non-trivial subobject in the category $\C_A$); but the category $\C_A$ is
semisimple and $\dim_\kk \Hom_{\C_A}(A,\,A)=\dim_\kk \Hom_\C(\be,\,A)=1$. In particular, this implies
that any \'etale algebra is a self-dual object of $\C$ (use Remark \ref{disconn} (ii) for disconnected
\'etale algebras).
\end{remark}

\subsection{From central functors to \'etale algebras}
\label{catCA1}

\begin{lemma}
\label{centraletale}
Let $\C$ be a braided fusion category, let $\A$ a fusion category, and let $F: \C \to \A$
be a central functor. Let $I : \A \to \C$ be the right adjoint functor of $F$. Then the object
$A=I(\be)\in \C$ has a canonical structure of connected \'etale algebra.

The category of right $A$-modules in $\C$ is monoidally equivalent to the image of $F$, i.e.\ the
smallest fusion subcategory of $\A$ containing $F(\C)$.
\end{lemma}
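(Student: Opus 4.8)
The plan is to exhibit the algebra structure on $A=I(\be)$ by transporting the obvious one from $\A$ through the adjunction, then to verify commutativity using the central structure, separability using semisimplicity of $\A$, and connectedness by Frobenius reciprocity; finally, the equivalence $\C_A\simeq\Image(F)$ will follow from the standard theory of modules over the algebra associated to an adjunction. First I would observe that since $F$ is a central functor, it is in particular a tensor functor between fusion categories, so the right adjoint $I$ exists and $I$ is automatically a lax tensor functor (the oplax structure on $F$ dualizes to a lax structure on $I$). Hence $A=I(\be)$ carries a canonical algebra structure, with multiplication $I(\be)\ot I(\be)\to I(\be\ot\be)=I(\be)$ coming from the lax structure and unit $\be\to I(F(\be))=I(\be)$ coming from the unit of the adjunction. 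This is entirely formal.

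The substantive points are commutativity and \'etaleness. For commutativity I would use the central structure: the isomorphisms $\gamma_{X,Y}\colon Y\ot F(X)\xrightarrow{\sim}F(X)\ot Y$ furnished by $F'\colon\C\to\Z(\A)$, together with the braiding $c$ of $\C$, give a compatibility that translates under the adjunction into the identity $m\circ c_{A,A}=m$ on $A=I(\be)$; concretely one checks that the two composites $A\ot A\to I(\be)$ differ precisely by the half-braiding on $F(A)$, which acts trivially on $\be\in\A$. This is the step I expect to require the most care, since it involves chasing the hexagon-type axioms for the half-braiding through the unit and counit of $F\dashv I$; it is the main obstacle. For separability, I would argue that $I$ sends the regular $A$-bimodule in $\A$, which splits off $\be\ot\be$ trivially since $\be$ is a unit, appropriately — more simply, one notes that the category $\C_A$ of $A$-modules is equivalent (via $I$ and the free-module functor) to a module category over $\A$, which is semisimple because $\A$ is a fusion category; then Proposition~\ref{semis eq} gives separability of $A$. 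Connectedness is immediate: $\Hom_\C(\be,A)=\Hom_\C(\be,I(\be))\cong\Hom_\A(F(\be),\be)=\Hom_\A(\be,\be)=\kk$.

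For the final assertion, the standard yoga of adjunctions between semisimple categories applies. The functor $F$ factors as $\C\to\C_A\to\A$, where the first arrow is $X\mapsto F(X)\ot A$ viewed with its evident $A$-module structure (equivalently, induction along $\be\to A$), and the second is the functor $\C_A\to\A$ sending an $A$-module $M$ to... rather, one uses that $I$ lifts to an equivalence between $\A$ and $\C_A$ onto its image: the monad $I F$ on $\C$ is isomorphic to $-\ot A$, so by the Barr--Beck-type description $\C_A\simeq\A_{\text{something}}$; concretely $\C_A$ is monoidally equivalent, via the functor sending an $A$-module to $F$ of its underlying object with the induced structure, to the full tensor subcategory of $\A$ generated by $F(\C)$. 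The monoidal structure matches because $\ot_A$ on $\C_A$ goes to $\ot$ on $\A$ under this functor, which again follows from $F$ being a (central, hence in particular) tensor functor. I would cite or reprove the essential surjectivity onto $\Image(F)$ by noting every object of $\Image(F)$ is a summand of $F(X)$ for some $X\in\C$, and $F(X)$ is the image of the free $A$-module $X\ot A$.
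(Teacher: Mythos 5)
Your overall route coincides with the paper's: the algebra structure on $A=I(\be)$ via the lax monoidal structure of $I$ (the paper phrases this through the representable functor $X\mapsto\Hom_\A(F(X),\be)$, which yields the same multiplication), connectedness by adjunction, commutativity from the central structure $F'$, and separability from semisimplicity of $\C_A$ together with Proposition~\ref{semis eq}. One caveat on commutativity, which you rightly flag as the delicate step: in the paper the crux of the diagram chase is that the half-braiding of $F'(A)$ is natural in its second argument with respect to the morphism $l\colon F(A)\to\be$ even though $l$ is a morphism in $\A$ only, not in $\Z(\A)$; your sketch (``the half-braiding acts trivially on $\be$'') names the right mechanism, but that naturality point is exactly what makes the computation close, so it cannot be elided.

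The genuine gap is in your final paragraph. The functor you offer as the monoidal equivalence, $M\mapsto F(M)$ ``with the induced structure'', is neither an equivalence $\C_A\to\Image(F)$ nor monoidal: $F(M\ot_A N)\cong F(M)\ot_{F(A)}F(N)$ rather than $F(M)\ot F(N)$, and the unit $A\in\C_A$ goes to $F(A)\ncong\be$ (for $\C=\Rep(G)$, $F$ the forgetful functor to $\Vec$ and $A=\Fun(G)$ one has $\C_A\simeq\Vec$ while $F(A)$ is $|G|$-dimensional). Likewise $I$ does not give an equivalence of all of $\A$ with $\C_A$: the Barr--Beck comparison functor for the monad $IF\cong -\ot A$ fails to be faithful whenever $F$ is not surjective, which is why your placeholder for the Eilenberg--Moore description cannot be filled in as written. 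The correct statement, which the paper simply quotes from \cite[Theorem 3.17]{EO} (cf.\ \cite[Theorem 3.1]{O}), is that $I$ \emph{restricted to} $\Image(F)$ --- i.e.\ the internal Hom functor $\uHom(\be,-)$ for the $\C$-module category $\A$ --- is an equivalence onto $\C_A$; equivalently, the inverse sends $M$ to $F(M)\ot_{F(A)}\be$, where $\be$ is an $F(A)$-module via the counit $F(A)=F(I(\be))\to\be$, and with this functor your essential-surjectivity remark ($X\ot A\mapsto F(X)$) becomes correct. Since your separability argument obtains semisimplicity of $\C_A$ precisely from this identification (your phrase ``module category over $\A$'' should be: a full subcategory of $\A$ closed under direct summands, hence semisimple), repairing this step is also needed to complete that part.
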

\begin{proof}
Let $\phi: \C \to \Vec$ be the contravariant representable functor corresponding to $A$,
that is, $\phi(X)=\Hom_\C(X,\,A) \cong \Hom_\A(F(X),\,\be)$. The linear map
\begin{multline*}
\Hom_\A(F(X_1),\,\be)\ot_\kk \Hom_\A(F(X_2),\,\be)\to  \\
\Hom_\A(F(X_1)\ot F(X_2),\,\be \ot \be) \cong  \Hom_\A(F(X_1\ot X_2),\be)
\end{multline*}
defines a natural family
\begin{equation}
\label{nu}
\nu_{X_1,X_2}: \phi(X_1)\ot_\kk \phi(X_2)\to \phi(X_1\ot X_2)
\end{equation}
such that the compositions
\begin{equation}\begin{array}{ccccl}
\phi(X_1)\ot \phi(X_2)\ot \phi(X_3) &\to& \phi(X_1\ot X_2)\ot \phi(X_3)&\to& \phi(X_1\ot X_2\ot X_3), \\
\phi(X_1)\ot \phi(X_2)\ot \phi(X_3) &\to& \phi(X_1)\ot \phi(X_2\ot X_3)&\to& \phi(X_1\ot X_2\ot X_3)
\end{array}\label{assoc}  \end{equation}
are equal. We claim that a morphism \eqref{nu} is the same thing as an associative multiplication 
$m: A\ot A\to A$. Namely, we define $m\in\Hom(A\ot A,A)=\phi(A\ot A)$ by $m:=\nu_{A,A}(\id_A\ot \id_A)$,
where $\id_A$ is considered as an element of $\phi(A)$. Now by naturality of $\nu$ one has
\[ \nu_{X_1,X_2}(f\otimes g)=m\circ(f\otimes g), \]
and associativity of $m$ follows from (\ref{assoc}).

By definition, $\Hom_\C(\be,\,A)=\Hom_\A(F(\be),\be)=\Hom_\A(\be,\be)=\kk$. It is
easy to see that the image of $1\in \kk$ in $\Hom_\C(\be,A)$ is a unit of the algebra $A$.

Next we want to prove the commutativity of $m$. By its definition, $m$
is the image of a certain morphism $\tilde{m}\in\Hom_\A(F(A\ot A),\be)$
under the bijection 
\[
\Hom_\A(F(A\ot A),\be)\cong\Hom_\C(A\ot A,A). 
\]
By naturality of
the adjunction bijections, $m\circ c_{A,A}$ corresponds to 
$\tilde{m}\circ F(c_{A,A})\in\Hom_\A(F(A\ot A),\be)$. The equality
$\tilde{m}=\tilde{m}\circ F(c_{A,A})$ follows from commutativity of
the following diagram, where $F'$ is the central structure,
i.e.\ a braided tensor functor $F':\C\rightarrow \Z(\A)$ lifting $F:\C\rightarrow\A$. 

\begin{equation*}
\xymatrix{F'(A\ot A)\ar[rr]^{\sim} \ar[d]_{F'(c_{A,A})}
&& F'(A)\ot F'(A)\ar[rr]^{\quad \quad l\ot l} \ar[d]_{c_{F'(A),F'(A)}}
&& \be\ot\be \ar[d]^{c_{\be,\be}} \ar[rr]^{\sim} &&\be \ar[d]_{\id_\be}\\
F'(A\ot A) \ar[rr]^{\sim} && F'(A)\ot F'(A)\ar[rr]^{\quad \quad l\ot l}&& \be\ot\be\ar[rr]^{\sim} &&\be}
\end{equation*}
Here 
$l\in\Hom_\C(F(A),\be)$ is the
image of $\id_A$ under $\Hom_\C(A,A)\cong \Hom_\A(F(A),\be)$. The left
square commutes since $F'$ is a braided functor, and the right one
since $c_{\be,\be}=\id$. That the middle square commutes is more
subtle, since $l:F(A)\rightarrow\be$ only is a morphism in $\A$, but not in $\Z(\A)$.
It commutes nevertheless since the braiding of $\Z(\A)$ is natural for
such morphisms w.r.t.\ the second argument. (Since
$c_{(X,e_X),(Y,e_Y)}=e_X(Y)$ and the half-braiding $Y\mapsto e_X(Y)$ is natural w.r.t.\ all
morphisms $Y\rightarrow Y'$ in $\A$.)

That the category of right $A$-modules in $\C$ identifies with the image of $F$ in $\A$
follows from \cite[Theorem 3.17]{EO} (cf.\ also \cite[Theorem 3.1]{O}).
Thus $\C_A$ is semisimple. By Proposition~\ref{semis eq}
semisimplicity of the category of $A$-modules implies the semisimplicity of
the category of $A$-bimodules. In particular, the morphism of $A-$bimodules $m: A\ot A\to A$, thus
$A$ is separable.
\end{proof}

\begin{example}
\begin{enumerate}
\item[(i)] Let $\C=\Rep (G)$ and $F:\C\to\Vec$ the forgetful functor. Then the \'etale algebra $A$
from Lemma \ref{centraletale} is the regular algebra, see Example~\ref{regular}.
\item[(ii)]  Let $\Vec_G^\omega$ be the fusion category of finite dimensional $G$-graded
vector spaces with the associativity constraint twisted by a $3$-cocycle
$\omega\in Z^3(G,\, \kk^\times)$.
Let $\C =\Z(\Vec_G^\omega)$ and $F: \C \to \Vec_G^\omega$ the forgetful functor.
Then the  \'etale algebra $A$ from Lemma~\ref{centraletale} is the regular algebra of
$\Rep(G)\subset \C$.
\item[(iii)] Let $\C =\Z(\Rep(G))\cong \Z(\Vec_G)$ and $F: \C \to \Rep(G)$ the forgetful functor.
Then the \'etale algebra $A$ from Lemma \ref{centraletale} is the group algebra of $G$ considered
as an algebra in $\C$. Notice that in this case the algebra $F(A)$ in the symmetric
tensor category $\Rep(G)$ is non-commutative unless $G$ is commutative.
\end{enumerate}
\end{example}

\begin{remark} Lemma \ref{centraletale} fails over fields of characteristic $p>0$.
Namely the algebra $A=I(\be)$ is still commutative (with the same proof) but it can
fail to be separable. Here is a
counter-example. 
Let $G$ be a finite abelian group of order divisible by $p$. Take $\C=\Vec_G$,
i.e., $\C$ is the category of finite-dimensional $G$-graded vector spaces
with the obvious symmetric braided structure. Let $\D=\Vec$ and let $F:\C \to \D$ be the
functor of forgetting the grading. Then $A$ is the group algebra of $G$, which is not
\'etale. In this example the category of $A$-bimodules identifies with $\Rep(G)$ and is not semisimple.
\end{remark}

\subsection{The tensor category $\C_A$ corresponding to an \'etale algebra $A$}
\label{catCA2}

Let $\C$ be a braided fusion category and let $A\in \C$ be a connected \'etale algebra. 
Let $\C_A$ be the category of right $A$-modules and let 
\begin{equation}
\label{functor FA}
F_A: \C \to \C_A : X \mapsto X\ot A
\end{equation}
be the free module functor.
The category $\C_A$ is semisimple by Proposition~\ref{semis eq}.

Using the braiding we can define two left $A$-module structures on a right $A$-module $M$ by
\begin{equation}
\label{pm}
 A\otimes M\xrightarrow{c_{A,M}}M\otimes
A\to M \quad \mbox{ or by } \quad  A\otimes M\xrightarrow{c_{M,A}^{-1}}M\otimes
A\to M.
\end{equation}
Both structures make $M$ 
an $A$-bimodule, and we will denote the results by $M_+$ and $M_-$, respectively.
Clearly, the functors $M\mapsto M_\pm$  are sections
of the forgetful functor ${}_A \C_A\to \C_A$.

Since the category  ${}_A \C_A$ of $A$-bimodules in $\C$ is a tensor category, we obtain in this way 
two tensor structures $\ot_\pm$  on $\C_A$ which are opposite to each other. For definiteness,
when we consider $\C_A$ as a tensor category, the tensor structure $\ot_-$ is understood. By definition, we have 
tensor functors $\C_A\to{}_A\C_A$ and $\C_A^{\rev}\to{}_A\C_A$. 

Now the functor $F_A:\C\to\C_A$ has an obvious structure of tensor functor. The category
$\C_A$ is rigid since any object $M$ in $\C_A$ is a direct summand of the rigid object
$F_A(M)=M\ot A=M\ot_A(A\ot A)$. The unit object of $\C_A$ is $A=F_A(\be)$ and the connectedness
of $A$ implies that $A\in \C_A$ is simple. Thus, $\C_A$ is a fusion category. Alternatively, this
follows from the fact that ${}_A\C_A$ is fusion, cf.\ e.g.\ \cite{O}, and the fact that the functors
$M\mapsto M_\pm$ from $\C_A$ and $\C_A^{\rev}$ to ${}_A \C_A$ are tensor embeddings.


\begin{example}
\label{regulardeeq}
 Let $\C$ be a braided fusion category and let $\E \subset \C$ be a Tannakian
subcategory. Let $A\in \E$ be the regular algebra (which is connected \'etale by
Example \ref{regularce} (i)). In the terminology of  \cite[Section 4.2]{DGNO}
the fusion category $\C_A$ introduced above is 
the de-equivariantization of $\C$ (cf.\ Section~\ref{eq and deeq}) viewed as a fusion category over~$\E$.
\end{example}

\subsection{The central functor $\C\to\C_A$}\label{eacf}
Observe that the free module functor \eqref{functor FA}
admits a natural structure of a central functor, see Definition~\ref{def centfun}. 
Indeed, we have $F_A(X)=X\otimes A$, and, hence,
$F_A(X)\otimes_A Y = X\otimes Y$. Similarly, $Y\otimes_A
F_A(X)=Y\otimes X$. These two objects are isomorphic via the
braiding of $\C$ (using the commutativity of $A$, one can  check that the braiding gives an
isomorphism of $A$-modules) and,
hence, $F_A$ lifts to a braided tensor functor
\begin{equation}
\label{tilde functor FA}
 F'_A: \C \to \Z(\C_A)
\end{equation}
whose composition with the forgetful functor $\Z(\C_A) \to \C_A$ equals $F_A$.
This construction is in  a sense converse to Lemma~\ref{centraletale}:

\begin{lemma} \label{eacfl}
Let $A\in\C$ be a connected \'etale algebra and let $F_A: \C\rightarrow\C_\A$ 
be the central functor as above. Then the algebra object $A_{F_A}=I(\be)$ obtained from $F_A$ 
according to Lemma \ref{centraletale} is isomorphic to $A$.
\end{lemma}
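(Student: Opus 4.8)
The plan is to identify the right adjoint $I$ of the free module functor $F_A\colon\C\to\C_A$ explicitly and then compute $I(\be)=I(A)$, where $A\in\C_A$ is the unit object. First I would recall the standard adjunction: the free module functor $F_A(X)=X\ot A$ is left adjoint to the forgetful functor $U\colon\C_A\to\C$, so $\Hom_{\C_A}(X\ot A,M)\cong\Hom_\C(X,U(M))$. Composing with the adjunction for the forgetful functor on the other side will not immediately give $I$, since $F_A$ here is regarded as a central functor $\C\to\Z(\C_A)$ and $I$ is its right adjoint; but by Lemma~\ref{centraletale} the algebra $A_{F_A}=I(\be)$ only depends on $F_A\colon\C\to\C_A$ as an ordinary tensor functor together with the underlying unit object, and $I(\be)$ is computed as the right adjoint of $F_A\colon\C\to\C_A$ applied to the unit object $A$ of $\C_A$. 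So I need the right adjoint of $X\mapsto X\ot A$.

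The key observation is that $F_A\colon\C\to\C_A$, $X\mapsto X\ot A$, admits a right adjoint given by the forgetful functor $U\colon\C_A\to\C$ itself: indeed for $M\in\C_A$ and $X\in\C$ one has the natural isomorphism $\Hom_{\C_A}(X\ot A,\,M)\cong\Hom_\C(X,\,M)$ (a right $A$-module map out of the free module $X\ot A$ is determined by its restriction along $X=X\ot\be\hookrightarrow X\ot A$, this being exactly the free-forgetful adjunction). Hence $I=U$ and $A_{F_A}=I(\be_{\C_A})=U(A)=A$ as an object of $\C$. Then I would check that this isomorphism of objects is in fact an isomorphism of algebras: the algebra structure on $A_{F_A}=I(\be)$ produced in the proof of Lemma~\ref{centraletale} is the unique one for which the counit $F_A(A_{F_A})\to\be_{\C_A}$, i.e.\ $A\ot A\to A$, is a morphism in $\C_A$ and is ``multiplicative'' in the sense spelled out there; unwinding the adjunction $I=U$ shows that this counit is precisely the multiplication $m\colon A\ot A\to A$ of the original \'etale algebra $A$ (the canonical $A$-module surjection $A\ot A\to A$), so the reconstructed multiplication on $A_{F_A}$ coincides with $m$. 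Similarly the unit $\be\to A_{F_A}=A$ is the unit of $A$.

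I expect the main obstacle to be bookkeeping rather than conceptual: one has to be careful that the right adjoint $I$ appearing in Lemma~\ref{centraletale} is the right adjoint of the \emph{central} functor $F_A'\colon\C\to\Z(\C_A)$, and argue (as the statement of Lemma~\ref{centraletale} implicitly allows, since it only uses $A=I(\be)$ and its algebra structure, which are determined by $F_A$ as a plain tensor functor and the adjunction data) that this coincides with the right adjoint of the underlying functor $\C\to\C_A$ composed appropriately; equivalently, that $\Z(\C_A)\to\C_A$ has a right adjoint and the relevant composite right adjoint sends $\be$ to the same object. The cleanest route is to observe that $\Hom_{\Z(\C_A)}(F_A'(X),\be)\cong\Hom_{\C_A}(F_A(X),\be)$ because $\be\in\Z(\C_A)$ is the unit and a half-braiding contributes nothing for maps into the unit (the half-braiding on $\be$ is trivial), so $\phi(X)=\Hom_\C(X,A)$ with $A=U(\be_{\C_A})$, and then the identification of the multiplication is forced as in the proof of Lemma~\ref{centraletale}. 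Finally I would note that connectedness of $A$ guarantees $\dim_\kk\Hom_\C(\be,A)=1$, consistently with $A_{F_A}$ being connected, so no ambiguity in the unit arises. The whole argument is short; the only subtlety worth stating carefully is the comparison of the two descriptions of the counit $A\ot A\to A$.
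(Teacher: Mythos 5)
Your proposal is correct and follows essentially the same route as the paper: identify the right adjoint of the free module functor $F_A$ with the forgetful functor $\C_A\to\C$, note that the unit of $\C_A$ is $(A,m)$ so $I(\be_{\C_A})=A$, and check that the multiplication reconstructed in Lemma~\ref{centraletale} (via the adjunction counit) is the original $m$. The paper states this last verification as ``straightforward''; your unwinding of the counit is exactly that check, so there is no substantive difference.
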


\begin{proof} 
The adjoint of the functor 
$F_A:\C\rightarrow\C_A$ is 
  given by the forgetful functor $I:\C_A\rightarrow\C$. The unit of
  $\C_A$ being $(A,m)$, we have $I(\be_{\C_A})=A$. It is
  straightforward to see that the construction of the algebra
  structure on $A=I(\be_{\C_A})$ defined in (the proof of) Lemma
  \ref{centraletale} recovers the original algebra structure.
\end{proof}


Let $\A_1,\, \A_2$ be fusion categories.
We will say that a tensor functor $F:\A_1 \to \A_2$ is {\em surjective} if any object in $\A_2$ is
a subobject of some $F(X),\ X\in\A_1$. 

\begin{remark}
Some authors use the term {\em dominant} functor for what we call a surjective functor,
see \cite{Br, BrN}. 
\end{remark}

\begin{lemma} 
\label{dim CAlemma}
For a connected \'etale algebra $A$ in a braided fusion category $\C$ we have
{\em
\begin{equation}
\label{FPdimCA}
\FPdim(\C_A)=\frac{\FPdim(\C)}{\FPdim(A)}.
\end{equation}}
\end{lemma}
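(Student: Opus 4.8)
The plan is to use the fact that the free module functor $F_A: \C \to \C_A$ is surjective, together with multiplicativity of Frobenius--Perron dimensions of objects. First I would recall that $F_A(X) = X \otimes A$, and since every object of $\C_A$ is a direct summand of some $F_A(X)$ (indeed $M = M \otimes_A (A \otimes A)$ is a summand of $M \otimes A = F_A(M)$), the functor $F_A$ is surjective. The key numerical input is that $F_A$ sends the canonical Frobenius--Perron dimension function on $K(\C)$ to (a positive multiple of) the one on $K(\C_A)$: because $F_A$ is a tensor functor, $X \mapsto \FPdim_{\C_A}(F_A(X))$ is a ring homomorphism $K(\C) \to \BR$ taking positive values on nonzero objects, and by uniqueness of such a homomorphism it must coincide with $\FPdim_\C$. (Here one uses that $\FPdim_{\C_A}(F_A(X)) > 0$ for $X \neq 0$ since $F_A(X) \neq 0$, as $A$ is a summand of $F_A(X)$ already in low degree — more carefully, $F_A$ is faithful because $X$ is a summand of $X\otimes A \otimes A = F_A(X)\otimes A$, hence $F_A(X) \ne 0$ whenever $X \ne 0$.) Thus $\FPdim_{\C_A}(F_A(X)) = \FPdim_\C(X)$ for all $X \in \C$.

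Next I would compute $\FPdim_{\C_A}(F_A(R_\C))$ in two ways, where $R_\C$ is the regular object of $\C$ from \eqref{reg object}. On the one hand, by the previous paragraph applied $\BR$-linearly, $\FPdim_{\C_A}(F_A(R_\C)) = \FPdim_\C(R_\C) = \FPdim(\C)$. On the other hand, I claim $F_A(R_\C) = \FPdim(A)\, R_{\C_A}$ in $K(\C_A) \otimes_\BZ \BR$. To see this it suffices, by the characterization of the regular object via properties (1) and (2) following \eqref{reg object}, to check that $F_A(R_\C)$ satisfies the defining property $[M]\cdot F_A(R_\C) = \FPdim(M)\, F_A(R_\C)$ for every $M \in \C_A$: since $F_A$ is surjective it is enough to check this for $M = F_A(X)$, and then $[F_A(X)] \cdot F_A(R_\C) = F_A([X] R_\C) = F_A(\FPdim(X) R_\C) = \FPdim(X) F_A(R_\C) = \FPdim_{\C_A}(F_A(X))\, F_A(R_\C)$, using property (2) for $R_\C$ and the dimension identity from the first paragraph. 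Hence $F_A(R_\C)$ is a positive multiple of $R_{\C_A}$, and comparing the coefficient of the unit object $A = F_A(\be)$ (which appears in $F_A(R_\C) = \sum_X \FPdim(X) [X \otimes A]$ with coefficient at least $\FPdim(\be) = 1$ coming from $X = \be$, and one checks it is exactly $\FPdim(A)$ using that $\Hom_{\C_A}(A, X\otimes A) = \Hom_\C(\be, X\otimes A) = \Hom_\C(X^*, A)$, so the total multiplicity of $A$ in $F_A(R_\C)$ is $\sum_X \FPdim(X)\dim\Hom_\C(X^*,A) = \FPdim(A)$) pins down the scalar as $\FPdim(A)$.

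Finally, taking Frobenius--Perron dimensions: $\FPdim(\C) = \FPdim_{\C_A}(F_A(R_\C)) = \FPdim(A)\,\FPdim_{\C_A}(R_{\C_A}) = \FPdim(A)\,\FPdim(\C_A)$ by property (1), which rearranges to \eqref{FPdimCA}.

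The step I expect to require the most care is the identification $F_A(R_\C) = \FPdim(A)\, R_{\C_A}$, specifically the determination of the scalar: one must be a little careful that the coefficient of the unit object $A$ in $F_A(R_\C)$ really is $\FPdim(A)$ and not something else, which comes down to the computation $\sum_{X \in \O(\C)} \FPdim(X)\, \dim_\kk\Hom_\C(X, A) = \FPdim(A)$ (using self-duality of $A$ from Remark~\ref{etsd}, or just that $A \cong A^*$, to rewrite $\Hom_\C(X^*,A) \cong \Hom_\C(X, A)$). Alternatively one avoids this by noting that both sides of the claimed identity are $\BR_{>0}$-multiples of $R_{\C_A}$ and applying $\FPdim$ directly, using $\FPdim(F_A(R_\C)) = \FPdim(R_\C) = \FPdim(\C)$ on the left — but then one still needs the scalar to conclude, so the honest route is the dimension count above. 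Everything else is a formal consequence of surjectivity of $F_A$, its tensor-functoriality, and the uniqueness properties of $\FPdim$ and of the regular object recalled in Section~\ref{fuscat}.
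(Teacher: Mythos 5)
Your overall route is the paper's route: the paper proves this lemma by citing \cite[Proposition 8.8]{ENO1} --- which is precisely the statement you re-derive, namely that a surjective tensor functor sends $R_\C$ to the corresponding multiple of the regular object of the target --- and then comparing multiplicities of the unit object, i.e.\ the same count $\sum_{X\in\O(\C)}\FPdim(X)\dim_\kk\Hom_\C(X,A)=\FPdim(A)$ that you perform (in the paper this appears as $\FPdim(I(\be))$ with $I$ the right adjoint of $F_A$). So the step you flagged as the delicate one, the determination of the scalar, is fine and coincides with the paper's computation; likewise your observation that $\FPdim_{\C_A}(F_A(X))=\FPdim_\C(X)$ by uniqueness of the positive character is standard and correct.

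The genuine soft spot is earlier, in your inline proof that $F_A(R_\C)$ is proportional to $R_{\C_A}$: the assertion ``since $F_A$ is surjective it is enough to check $[M]\cdot F_A(R_\C)=\FPdim(M)F_A(R_\C)$ for $M=F_A(X)$'' is not justified as written. Surjectivity says every simple of $\C_A$ is a \emph{summand} of some $F_A(X)$, but eigenvector identities do not pass to summands, and the $\BR$-span of the classes $[F_A(X)]$ inside $K(\C_A)\ot_\BZ\BR$ can be a proper subspace (for instance, for the forgetful functor $\Z(\Vec_G)\to\Vec_G$ with $G$ non-abelian the image of the Grothendieck ring is spanned by conjugacy-class sums), so the characterization of $R_{\C_A}$ by property (2) is not formally verified by your check. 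The gap is repairable by Perron--Frobenius, and surjectivity is exactly what makes it work: (a) $F_A(R_\C)$ has strictly positive coordinates in the basis $\O(\C_A)$, since every simple $M$ occurs in some $F_A(X)$; (b) the matrix of multiplication by $[Z]$, where $Z=\bigoplus_{X\in\O(\C)}F_A(X)$, has strictly positive entries, because $\dim_\kk\Hom_{\C_A}(N\ot_A M^*,Z)>0$ for all simples $M,N$; and (c) your computation gives $[Z]\cdot F_A(R_\C)=\FPdim(Z)\,F_A(R_\C)$. A positive eigenvector of a strictly positive matrix is proportional to its Frobenius--Perron eigenvector, which by property (2) is $R_{\C_A}$; hence $F_A(R_\C)$ is a positive multiple of $R_{\C_A}$, and the rest of your argument goes through. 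With that one paragraph added, your proof is a correct self-contained derivation of the special case of \cite[Proposition 8.8]{ENO1} that the paper simply quotes.
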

\begin{proof}
The functor \eqref{functor FA} is surjective. Considering the multiplicity of the unit object
on both sides of the identity proven in \cite[Proposition 8.8]{ENO1}, we obtain
\[
\frac{\FPdim(\C)}{\FPdim(\C_A)} = \sum_{X\in \O(\C)}\, \FPdim(X) [F_A(X):\be] =\FPdim(I(\be)),
\]
where $\O(\C)$ denotes the set of simple objects of $\C$ and $I$ is the right adjoint of $F_A$.
Since $A =I(\be)$, the result follows.
\end{proof}

\subsection{Subcategory $\C_A^0\subset \C_A$ of dyslectic modules}
Let $\C$ be a braided fusion category and $A\in \C$ be a connected \'etale
algebra and recall the discussion of the tensor functors 
$M\mapsto M_\pm$ from $\C_A$ and $\C_A^{\rev}$ to ${}_A \C_A$ in Subsection \ref{catCA2}.


\begin{definition}
\label{def dysl}
A module $M\in\C_A$ is
{\em dyslectic} (or {\em local}, in alternative terminology)
if 
the identity map $\id_M$ is an isomorphism of $A$-bimodules $M_+\simeq M_-$.
\end{definition}

Equivalently, a module $M\in \C_A$ is dyslectic
if the following diagram
\begin{equation}
\xymatrix{
M \ot A \ar[rr]^{c_{A,M}\circ c_{M,A}} \ar[dr]_{\rho} &&
M\ot A \ar[dl]^{\rho} \\
& M &
}
\end{equation}
commutes. Here  $\rho: M\ot A \to M$ denotes the action of $A$ on $M$.

The notion of dyslectic module was introduced by Pareigis in \cite{P}. See also \cite{KiO}.

\begin{remark}
Note that a simple $M \in \C_A$  is dyslectic if and only if $M_+ \simeq M_-$ as $A$-bimodules.
Indeed,  since the functors $M \mapsto M_\pm$ from $\C_A$ to
${}_A \C_A$ are embeddings, for
any simple $M \in \C_A$ any isomorphism between $A$-bimodules $M_+$ and $M_-$
must be a multiple of $\id_M$.
\end{remark}

Dyslectic modules form a  full subcategory of $\C_A$ which will be denoted
by $\C_A^0$.
It is known (see \cite[Section 2]{P} and \cite{KiO}) that $\C_A^0$ is closed
under $\ot_A$ and  that the braiding in $\C$ induces a natural braided structure
in $\C_A^0$.  Thus, $\C_A^0$ is a braided fusion category.


\begin{example} \label{tannaka rep0}
Let $\E \subset \C$ be a Tannakian subcategory and let $A\in \E$
be a regular algebra, see Example~\ref{regular}. Then \cite[Proposition 4.56(i)]{DGNO}
says that $\C_A^0$ is equivalent to the de-equivariantization
of $\E'$, cf.\ Section~\ref{eq and deeq}.
\end{example}

\begin{lemma}
\label{when XA is dys}
Let $\C$ be a braided fusion category, let $A$ be an \'etale
algebra in $\C$, and let $X$ be an object of $\C$. Then
the free module $X\ot A$ is dyslectic if and only if $X$ centralizes $A$.
\end{lemma}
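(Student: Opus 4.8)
The plan is to unwind the definition of dyslectic for the specific module $M = X \otimes A$ and compare it directly with the centralizing condition $c_{A,X} \circ c_{X,A} = \id_{X \otimes A}$. First I would write out the right $A$-action on the free module $X \otimes A$, namely $\rho = \id_X \otimes m$, and then compute the two composites in the dyslecticity diagram: one is $\rho = \id_X \otimes m$ itself, and the other is $\rho \circ (c_{A, X\otimes A} \circ c_{X \otimes A, A})$, where by the hexagon axioms the double braiding $c_{A, X \otimes A} \circ c_{X \otimes A, A}$ on $A$ against $X \otimes A$ factors through the double braidings of $A$ against $X$ and of $A$ against $A$ separately.

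The key computation is that, using naturality of the braiding and the hexagon identities, the double braiding of $A$ past $X \otimes A$ decomposes as $(c_{A,X}\circ c_{X,A}) \otimes \id_A$ followed (in the $A$-slot) by the double braiding of $A$ past $A$; and since $A$ is commutative, the double braiding $c_{A,A}\circ c_{A,A}$ composed with $m$ simplifies — in fact commutativity $m \circ c_{A,A} = m$ gives $m \circ c_{A,A} \circ c_{A,A} = m \circ c_{A,A} = m$. Hence the composite $\rho \circ (c_{A,M}\circ c_{M,A})$ reduces to $(\id_X \otimes m) \circ \big((c_{A,X}\circ c_{X,A}) \otimes \id_A\big)$. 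Comparing with $\rho = \id_X \otimes m$, the dyslecticity diagram commutes if and only if $(c_{A,X}\circ c_{X,A}) \otimes \id_A$ acts trivially after applying $\id_X \otimes m$; since $m$ has a unit section (or simply since $\id_X \otimes m$ is a split epimorphism of the relevant objects, with splitting $\id_X \otimes (\id_A \otimes u)$ given by the unit $u: \be \to A$), this holds if and only if $c_{A,X}\circ c_{X,A} = \id_{X \otimes A}$, i.e.\ $X$ centralizes $A$. This establishes both implications at once.

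For the converse direction (centralizing implies dyslectic) one does not even need the unit trick: if $c_{A,X}\circ c_{X,A} = \id_{X\otimes A}$ then the decomposition above shows $\rho \circ (c_{A,M}\circ c_{M,A}) = \rho$ directly. For the forward direction (dyslectic implies centralizing) one composes the equality of morphisms $M \otimes A \to M$ with $\id_X \otimes \id_A \otimes u : X \otimes A \to X \otimes A \otimes A$ on the right, which after using the unit axiom $m \circ (\id_A \otimes u) = \id_A$ collapses the $A$-multiplication and isolates exactly the relation $c_{A,X}\circ c_{X,A} = \id_{X\otimes A}$.

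The main obstacle I anticipate is purely bookkeeping: correctly tracking which braiding morphisms appear (the double braiding $c_{A,-}\circ c_{-,A}$ is the "monodromy" in the sense of \eqref{monodromy-drinf}, but one must be careful about the order of the two tensor factors $X$ and $A$ inside $M = X \otimes A$ when applying the hexagon axioms), and verifying that the isomorphism $X \otimes A \to X \otimes A \otimes A$ one uses really does split $\id_X \otimes m$ as a morphism in $\C$ — this is where the unit axiom and the coherence isomorphisms enter. Once the monodromy decomposition is set up cleanly, the commutativity of $A$ does the rest, and the statement falls out as an "if and only if" with essentially no further work.
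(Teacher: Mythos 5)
Your proposal is correct and follows essentially the same route as the paper: decompose the monodromy $c_{A,X\ot A}\circ c_{X\ot A,A}$ via the hexagon axioms, use commutativity $m\circ c_{A,A}=m$ to absorb the resulting $c_{A,A}$ factors after composing with $\id_X\ot m$, and then precompose with the unit $\id_{X\ot A}\ot u$ to pass from $(\id_X\ot m)\circ\bigl((c_{A,X}\circ c_{X,A})\ot\id_A\bigr)=\id_X\ot m$ to $c_{A,X}\circ c_{X,A}=\id_{X\ot A}$. The only caveat is that the ``split epimorphism'' phrasing alone would not justify the forward implication (one cannot cancel $\id_X\ot m$ from the left), but the explicit unit-precomposition argument you give immediately afterwards is exactly the paper's step, and the small bookkeeping point you flag (the two $c_{A,A}$'s sandwich the term $(c_{A,X}\circ c_{X,A})\ot\id_A$ rather than follow it, so one of them is cancelled via its inverse or via $m\circ c_{A,A}=m$) is handled in the paper in just that way.
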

\begin{proof}
Consider the following diagram, where we omit identity maps and associativity constraints:
\begin{equation}
\xymatrix{
&&  A \ot X \ot A   \ar[dr]_{c_{A,X}} \ar[drr]^{c_{A,X\ot A}} &&  \\
X\ot A\ot A \ar[r]_{c_{A,A}} \ar[urr]^{c_{X\ot A,A}} \ar[drr]_{m_A} &
X\ot A\ot A \ar[ur]_{c_{X,A}} \ar[dr]^{m_A}  &&
X\ot A\ot A \ar[r]_{c_{A,A}} \ar[dl]_{m_A} & X\ot A\ot A \ar[dll]^{m_A} \\
&& X\ot A. &&
}
\end{equation}
The two upper triangles commute by the hexagon axioms and the two lower
triangles commute since $A$ is commutative.  Therefore,
\[
(\id_X \ot  m_A) \circ (c_{A,X}\circ c_{X,A} \ot \id_A)=
(\id_X \ot  m_A) \circ c_{A, X\ot A} \circ c_{X\ot A,A} \circ (\id_X \ot c_{A,A}^{-1}),
\]
which means that $X\ot A$ is dyslectic if and only if
\begin{equation}
\label{mid diamond}
(\id_X \ot  m_A) \circ (c_{A,X}\circ c_{X,A} \ot \id_A) = \id_X \ot  m_A.
\end{equation}
In other words, commutativity
of the perimeter of the above  diagram is equivalent to commutativity
of the diamond in the middle.   Let $u_A: \be \to A$ denote the unit of $A$.
Suppose that \eqref{mid diamond} holds.  We have
\begin{eqnarray*}
c_{A,X}\circ c_{X,A}
&=&  (\id_X \ot  m_A) \circ (\id_{X\ot A}\ot u_A) \circ  c_{A,X}\circ c_{X,A}  \\
&=&  (\id_X \ot  m_A) \circ (c_{A,X}\circ c_{X,A} \ot \id_A) \circ (\id_{X\ot A}\ot u_A) \\
&=& (\id_X \ot  m_A) \circ  (\id_{X\ot A}\ot u_A)  = \id_{X\ot A}.
\end{eqnarray*}
where the third equality holds by \eqref{mid diamond}.
Thus, \eqref{mid diamond} is equivalent to
$c_{A,X}\circ c_{X,A}=\id_{X\ot A}$. Combining the above equivalences we get the result.
\end{proof}

\subsection{\'Etale algebras in $\C_A^0$ and \'etale algebras over $A$}
Let $\C$ be a braided fusion category and let $A\in \C$ be a connected \'etale algebra. 
An algebra $B\in \C$ equipped with a unital homomorphism $f: A\to B$ is called {\em algebra
over $A$} if the following diagram commutes:
\begin{equation*}
\xymatrix{
A \ot B \ar[r]^{f\ot \id}\ar[d]_{c_{B,A}^{-1}}&B\ot B\ar[r]^{m_B}&B\\
B\ot A \ar[r]^{\id \ot f}
&B\ot B\ar[ur]_{m_B} &
}
\end{equation*}
In the language of \cite[\S 5.4]{O} we require that the morphism $f$ lands in the right center of $B$;
in particular for a commutative algebra $B$ this diagram commutes automatically. Notice that
the morphism $f$ is automatically injective since the algebra $A$ has no nontrivial right ideals.

Observe that an algebra $B$ over $A$ has an obvious structure of right $A-$module, that is $B\in \C_A$.
Moreover, any right $B-$module has an obvious structure of right $A-$module.
The following statements are tautological:

(a) An algebra over $A$ is the same as an algebra in $\C_A$.

(b) Let $B$ be an algebra over $A$. Then right $B-$module in $\C_A$ 
(with $B$ considered as an algebra
in $\C_A$) is the same as right $B-$module in $\C$. In particular, the categories 
$(\C_A)_B$ and $\C_B$ are equivalent. 

(c) Commutative algebra over $A$ is the same as commutative algebra in $\C_A^0\subset \C_A$.


\begin{proposition} \label{overetale}
{\em (cf. \cite[Lemma 4.13]{FFRS} and \cite[Proposition 2.3.3]{Da1})}
A commutative algebra over $A$ is \'etale if and only if the corresponding algebra in $\C_A^0$ is
\'etale. Under this bijection connected algebras correspond to connected ones.
\end{proposition}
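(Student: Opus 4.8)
The plan is to exploit the equivalence $(\C_A)_B \simeq \C_B$ from statement (b) preceding the proposition, together with statement (c) identifying commutative algebras over $A$ with commutative algebras in $\C_A^0$, and then reduce everything to the characterization of \'etale-ness through semisimplicity of module categories (Proposition~\ref{semis eq}) and the characterization of connectedness in terms of $\dim_\kk\Hom(\be,-)$. Let $B$ be a commutative algebra over $A$, so by (a) and (c) we may view $B$ as a commutative algebra in $\C_A$, and in fact in $\C_A^0$. The separability parts match up immediately: by Proposition~\ref{semis eq}, $B$ is separable in $\C$ iff $\C_B$ is semisimple, and $B$ is separable in $\C_A$ (equivalently in $\C_A^0$, since the latter is a full braided fusion subcategory closed under $\ot_A$, and an algebra in $\C_A^0$ is separable in $\C_A^0$ iff it is separable in $\C_A$ — one direction is a restriction of a bimodule splitting, the other uses that $\C_A^0$ is closed under the relevant bimodule constructions) iff $(\C_A)_B$ is semisimple; and $\C_B \simeq (\C_A)_B$ by (b).

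For the connectedness statement, I would compare $\Hom$-spaces: $\Hom_{\C_A}(A, B) \cong \Hom_\C(\be, B)$ by the free-module adjunction $F_A \dashv (\text{forget})$, since $A = F_A(\be)$ is the unit of $\C_A$. Hence $\dim_\kk\Hom_{\C_A}(\be_{\C_A}, B) = \dim_\kk\Hom_\C(\be, B)$, so $B$ is connected as an algebra in $\C_A$ iff it is connected as an algebra in $\C$. The same adjunction identity applies verbatim with $\C_A$ replaced by $\C_A^0$ once we know $B \in \C_A^0$, because the unit object $A$ of $\C_A^0$ is the same and the Hom-spaces computed in $\C_A^0$ coincide with those in $\C_A$ on dyslectic objects (full subcategory). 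This settles the ``connected $\leftrightarrow$ connected'' clause.

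It remains to handle commutativity, which is where the only real subtlety lies: a priori a commutative algebra over $A$, viewed in $\C_A$, need only be commutative with respect to one of the two tensor structures $\ot_\pm$, and we must check it lands in $\C_A^0$ so that the braiding of $\C_A^0$ makes sense. But statement (c) — that a commutative algebra over $A$ is the same as a commutative algebra in $\C_A^0 \subset \C_A$ — is precisely the assertion that commutativity of $B$ (as an algebra over $A$ in $\C$) forces the underlying $A$-module to be dyslectic and the multiplication to be commutative for the induced braiding on $\C_A^0$. Since the proposition's hypothesis includes commutativity, we may invoke (c) directly and there is nothing further to prove on this point.

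The main obstacle I anticipate is the separability equivalence in the subtle direction: namely verifying carefully that separability of $B$ as an algebra in $\C_A$ is the same as separability of $B$ as an algebra in the braided category $\C_A^0$, i.e.\ that passing between ``bimodule splitting in $\C_A$'' and ``bimodule splitting in $\C_A^0$'' is harmless. This is handled by observing that $A$-bimodules over $B$ in $\C_A$ with both $A$-actions induced from the $\C_A^0$-structure already live in an appropriate dyslectic category, so the relevant module categories agree; concretely, $(\C_A^0)_B \simeq (\C_A)_B \simeq \C_B$, and then Proposition~\ref{semis eq} applies at each stage. Once this is in place, the chain of equivalences $\C_B \simeq (\C_A)_B \simeq (\C_A^0)_B$ together with the $\Hom$-space computation completes the proof; the cited references \cite[Lemma 4.13]{FFRS} and \cite[Proposition 2.3.3]{Da1} can be invoked for the bookkeeping if one wishes to keep the argument short.
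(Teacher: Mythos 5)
Your proof is correct and follows essentially the same route as the paper: the paper disposes of the first statement by exactly your combination of the tautologies (a)--(c) with Proposition~\ref{semis eq} (separability in $\C_A^0$ versus $\C_A$ being the same condition because $\C_A^0$ is a full subcategory closed under $\ot_A$, and $(\C_A)_B\simeq\C_B$), while for connectedness the paper cites the fact that a simple $A$-module $M$ with $\Hom_\C(\be,M)\neq 0$ is isomorphic to $A$; your free-module adjunction $\Hom_{\C_A}(A,B)=\Hom_{\C_A}(F_A(\be),B)\cong\Hom_\C(\be,B)$ is an equally valid (and arguably more direct) way of getting the same equality of dimensions. One caution: the claim in your last paragraph that $(\C_A^0)_B\simeq(\C_A)_B$ is false in general --- already for $B=A$ it would say $\C_A^0\simeq\C_A$ --- since a $B$-module in $\C_A$ need not be dyslectic over $A$. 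Fortunately your argument never needs it: the comparison of separability in $\C_A^0$ and in $\C_A$ is already settled by your fullness-and-closure observation about the splitting of $m\colon B\ot_A B\to B$, so that stray identification should simply be deleted.
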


\begin{proof} The first statement follows from the tautologies above combined with 
Proposition \ref{semis eq}. The second statement is implied by the fact that a simple $A-$module $M$
with $\Hom_\C(\be,M)\ne 0$ is isomorphic to $A$, see e.g. \cite[Lemma 3.2]{O}.
\end{proof}

\subsection{The category $\Rep_\A(A)$ and its center}
\label{schauenburg}
Let $\A$ be a fusion category and let $F: \Z(\A)\to \A$ be the forgetful functor.
Let $A\in \Z(\A)$ be a connected \'etale algebra. Observe that any right $F(A)$-module
$M\in \A$ has a natural structure of left $F(A)$-module defined as $F(A)\ot M\xrightarrow{\sim} 
M\ot F(A)\to M$.
It is easy to verify that in this way $M$ acquires a structure of $F(A)$-bimodule.

\begin{definition} The category $\Rep_\A(A)$ is the tensor category of right $F(A)$-modules
in $\A$ with tensor product $\ot_{F(A)}$.
\end{definition}

\begin{remark} \label{left=right}
\begin{enumerate}
\item[(i)] Assume that $\C$ is a braided fusion category and $A\in \C$ is
a connected \'etale algebra. Then $A$ can be considered as a connected \'etale
algebra in $\Z(\C)$ via the braided functor $\C \to \Z(\C)$ given in (\ref{C_ZC}). 
In this case the categories
$\C_A$ and $\Rep_\C(A)$ are identical. Nevertheless the tensor structures on
$\C_A$ and $\Rep_\C(A)$ are {\em opposite} to each other.
\item[(ii)] The category $\Rep_\C(A)$ is equivalent to the category of {\em left} $F(A)-$modules.
\end{enumerate}
\end{remark}

Arguments similar to those in Section~ \ref{catCA2} show that $\Rep_\A(A)$ is a semisimple
rigid tensor category. Its unit object $F(A)$ may be reducible, so in general $\Rep_\A(A)$ is not
a fusion category. In general $\Rep_\A(A)$ is an example of a multi-fusion category,
see Section~\ref{fuscat}.

\begin{remark} \label{brn}
Given an \'etale algebra $A\in \Z(\A)$ there is a surjective tensor functor 
\[
\A \to \Rep_\A(A) : X\mapsto X\ot F(A).
\]
Conversely, let $G: \A \to \B$ be a tensor functor and let $I: \B \to \A$ be its right adjoint.
Then the object $I(\be)\in \A$ has a natural lift to $\Z(\A)$. Moreover, it has a natural structure
of an \'etale algebra in $\Z(\A)$. The algebra $I(\be)\in \Z(\A)$ is connected if and only if
the functor $G$ is not decomposable into a non-trivial direct sum of tensor functors.
Similarly to Section~ \ref{eacf} these two constructions are inverse to each other.
See \cite{BrN} for details.
\end{remark}

It is easy to see that the forgetful functor $\Z(\A)_A^0\hookrightarrow \Z(\A)_A\to \Rep_\A(A)$ has a
canonical structure of central functor. Thus, it lifts to a braided tensor functor
\begin{equation}
\label{schfun}
\Z(\A)_A^0\to \Z(\Rep_\A(A)). 
\end{equation}
The following result was proved by Schauenburg (see \cite[Corollary 4.5]{Sch})
under much weaker assumptions on the category $\A$ and commutative algebra
$A\in \Z(\A)$ than ours.

\begin{theorem}\label{petersch}
The functor \eqref{schfun} is a braided equivalence $\Z(\A)_A^0\cong \Z(\Rep_\A(A))$.
\end{theorem}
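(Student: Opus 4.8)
The plan is to show that the braided tensor functor \eqref{schfun}, which I will call $\Phi$, is fully faithful and essentially surjective; being braided by construction, it is then a braided equivalence. Write $F\colon\Z(\A)\to\A$ for the forgetful functor, put $\D=\Rep_\A(A)$, let $T\colon\Z(\A)_A^0\to\D$ be $\Phi$ followed by the forgetful functor $\Z(\D)\to\D$ (so $T$ is the central functor $\Z(\A)_A^0\hookrightarrow\Z(\A)_A\to\D$ sending a dyslectic $A$-module $M$ to the underlying right $F(A)$-module $F(M)$), and for $M\in\Z(\A)$ write $e_M$ for its half-braiding. Faithfulness of $\Phi$ is immediate, since $T=(\mathrm{forget})\circ\Phi$ is faithful ($F$ is faithful and forgetting the $A$-action is faithful). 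The step I would build everything around is the bookkeeping identity $(W\ot F(A))\ot_{F(A)}(-)=W\ot(-)$ for $W\in\A$: unwinding the central structure on \eqref{schfun}, the half-braiding $\gamma^M$ of $\Phi(M)$, evaluated on the free $F(A)$-module $W\ot F(A)$, is precisely the half-braiding $e_M(W)\colon W\ot F(M)\to F(M)\ot W$ of $M$ in $\Z(\A)$ (the dyslexia of $M$ is exactly what makes $e_M(-)$ descend to $\ot_{F(A)}$ and produce $\gamma^M$).

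For fullness, a morphism $f\in\Hom_{\Z(\D)}(\Phi M,\Phi N)$ is, by definition, an $F(A)$-module map $F(M)\to F(N)$ commuting with $\gamma^M,\gamma^N$ on all objects of $\D$, in particular on all free modules $W\ot F(A)$; by the identity above it therefore commutes with $e_M(W),e_N(W)$ for every $W\in\A$, hence lifts to a morphism $\bar f\colon M\to N$ in $\Z(\A)$, which is automatically $A$-linear (detected by the faithful $F$) and a morphism of dyslectic modules (as $\Z(\A)_A^0$ is a full subcategory of $\Z(\A)_A$), so $\bar f\in\Hom_{\Z(\A)_A^0}(M,N)$ with $\Phi(\bar f)=f$. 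For essential surjectivity I would run the same bookkeeping in reverse: given $(Y,\gamma)\in\Z(\D)$, equip the underlying $\A$-object of $Y$ with the half-braiding $W\mapsto\gamma_{W\ot F(A)}$ (this is a half-braiding relative to $\A$ because the coherences of $\gamma$, restricted to free modules, become those of a half-braiding, using $(W_1\ot F(A))\ot_{F(A)}(W_2\ot F(A))=(W_1\ot W_2)\ot F(A)$) and with the $A$-module structure coming from the $F(A)$-action on $Y$, obtaining an object $M$; one then checks $\Phi(M)\cong(Y,\gamma)$.

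The hard part will be the remaining verification inside essential surjectivity: that the $A$-module structure reconstructed from the $F(A)$-action on $Y$ is compatible with the half-braiding reconstructed from $\gamma$ (i.e.\ defines an object of $\Z(\A)$, not merely of $\A$) and that the resulting module $M$ is dyslectic --- equivalently, that a half-braiding for $Y$ relative to $(\D,\ot_{F(A)})$ encodes exactly the data needed to lift $Y$ to a \emph{dyslectic} $A$-module in $\Z(\A)$. This is a diagram chase that must carefully track the central structure of \eqref{schfun}. Alternatively, essential surjectivity can be replaced by a Frobenius--Perron dimension count: by \eqref{dimZ} together with the formula $\FPdim(\C_A^0)=\FPdim(\C)/\FPdim(A)^2$ for a connected \'etale algebra $A$ in a non-degenerate braided fusion category $\C$ \cite{DGNO}, one gets $\FPdim(\Z(\A)_A^0)=\FPdim(\A)^2/\FPdim(A)^2$; using Lemma~\ref{dim CAlemma}/Remark~\ref{brn} and the fact that the Drinfeld center of a multi-fusion category is a fusion category, equivalent to the center of any of its indecomposable components, one obtains the same value for $\FPdim(\Z(\D))$ (the possible multi-fusion nature of $\D$ is the point needing care here), and then the fully faithful tensor functor $\Phi$ between fusion categories of equal dimension is an equivalence by \cite[Proposition 2.19]{EO}. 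Finally, one may instead simply observe that \eqref{schfun} is the functor of \cite[Corollary 4.5]{Sch}, whose hypotheses are satisfied in our situation, which is the route indicated in the statement.
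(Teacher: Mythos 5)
Your primary route is essentially the paper's own: the paper proves the theorem by sketching the inverse functor built exactly on your key bookkeeping identity $(W\ot F(A))\ot_{F(A)}M=W\ot M$ and $M\ot_{F(A)}(W\ot F(A))=M\ot W$ --- restricting a half-braiding in $\Z(\Rep_\A(A))$ to the free modules $W\ot F(A)$ to get an object of $\Z(\A)$, recovering the $A$-action from the $F(A)$-action, and checking dyslecticity --- so your fully-faithful-plus-essentially-surjective packaging is the same argument at the same (sketch) level of rigor. One caveat: your alternative dimension-count finish would be circular in this paper's logic, because the formula $\FPdim(\C_A^0)=\FPdim(\C)/\FPdim(A)^2$ (equation \eqref{FPdimCA0}) is deduced here from the present theorem via Corollary \ref{ZCA} and is not available in \cite{DGNO} for general connected \'etale algebras, so one should stick with the direct reconstruction or the appeal to \cite[Corollary 4.5]{Sch}.
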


\noindent
{\em Sketch of proof.} We just sketch a construction of an inverse functor. Let $M\in \Z(\Rep_\A(A))$.
For any $X\in \A$ consider $X\ot F(A)\in \Rep_\A(A)$. Then 
\[
(X\ot F(A))\ot_{F(A)}M=X\ot M \quad \mbox{and} \quad M\ot_{F(A)}(X\ot F(A))=M\ot X. 
\]
It is easy to see now that the central structure of $M$ as
$F(A)$-module defines a central structure of $M$ as an object of $\A$. Moreover one
verifies directly that $F(A)$-module structure on $M$ gives $A$-module structure on
this lift of $M$ to $\Z(\A)$; the resulting object of $\Z(\A)_A$ lies in $\Z(\A)_A^0$. Finally, this
assignment has a natural structure of tensor functor. \hfill \qedsymbol

\begin{remark} \label{indmult}
Theorem \ref{petersch} implies that the unit object of the fusion category $\Z(\Rep_\A(A))$
is indecomposable (recall that the algebra $A$ is connected). It follows that the multi-fusion category
$\Rep_\A(A)$ is {\em indecomposable} in the sense of \cite[Section 2.4]{ENO1}. 
\end{remark}

\subsection{Properties of braided tensor functors}

\begin{proposition}
\label{AinC'}
Let $\C,\,\D$ be braided fusion categories and let $F:\C \to \D$ be a surjective
braided tensor functor.  Let $I:\D \to \C$  be the right adjoint functor of $F$ and
let $A:=I(\be)$ be the canonical connected  \'etale algebra constructed in Lemma~\ref{centraletale}.
Then $A\in \C'$.
\end{proposition}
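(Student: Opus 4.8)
The plan is to read $A\in\C'$ as the statement that $A$ centralizes every object of $\C$, and, since the centralizing relation is symmetric, to check instead that every $X\in\C$ centralizes $A$. By Lemma~\ref{when XA is dys} this is equivalent to saying that the free module $F_A(X)=X\ot A$ is dyslectic, so the whole proposition reduces to proving $\Image(F_A)\subseteq\C_A^0$. Note that Lemma~\ref{centraletale} does apply here: a braided $F$ is in particular a central functor, via $F$ followed by the canonical embedding $\D\hookrightarrow\Z(\D)$, so $A=I(\be)$ really is a connected \'etale algebra and $\C_A$ is monoidally equivalent to $\Image(F)=\D$.

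The engine of the argument is the standard fact that the right adjoint $I$ of a braided monoidal functor $F$ is canonically \emph{braided} lax monoidal: its lax structure morphisms $J_{Y,Z}\colon I(Y)\ot I(Z)\to I(Y\ot Z)$ — of which $J_{\be,\be}$, together with the unit, is the multiplication of $A=I(\be)$ produced in Lemma~\ref{centraletale} — satisfy the compatibility $J_{Z,Y}\circ c^{\C}_{I(Y),I(Z)}=I(c^{\D}_{Y,Z})\circ J_{Y,Z}$. I would either quote this or verify it by a short mates computation with the adjunction. Granting it, fix $X\in\C$ and set $M:=I(F(X))$, made into a right $A$-module via $M\ot A=I(FX)\ot I(\be)\xrightarrow{\,J_{FX,\be}\,}I(FX)=M$. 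By the identifications set up in Section~\ref{catCA2} and in Lemmas~\ref{centraletale}--\ref{eacfl} (under which $\C_A\simeq\Image(F)$, $F_A$ is the free module functor and $I$ is the forgetful functor $\C_A\to\C$), this $A$-module $M$ is exactly the free module $F_A(X)=X\ot A$.

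Now compute the two left $A$-actions on $M$ that define $M_+$ and $M_-$. Applying the braided-lax compatibility to the pairs $(Y,Z)=(\be,F(X))$ and $(F(X),\be)$ and using that $c^{\D}_{\be,-}$ and $c^{\D}_{-,\be}$ are the unit isomorphisms, one obtains
\[
J_{\be,FX}=J_{FX,\be}\circ c^{\C}_{A,M}\qquad\text{and}\qquad J_{\be,FX}=J_{FX,\be}\circ\bigl(c^{\C}_{M,A}\bigr)^{-1}.
\]
Since $J_{FX,\be}=\rho_M$ is the right $A$-action, the two right-hand sides are precisely the left actions turning $M$ into $M_+$ and into $M_-$ respectively. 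Hence $M_+=M_-$ via $\id_M$, i.e.\ $M=F_A(X)$ is dyslectic. As $X$ ranges over all objects of $\C$, Lemma~\ref{when XA is dys} gives that every object of $\C$ centralizes $A$, that is, $A\in\C'$.

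The only point that genuinely needs care is the claim that the adjoint $I$ is braided lax monoidal \emph{with a lax structure compatible with the algebra structure on $A=I(\be)$ fixed in Lemma~\ref{centraletale}}; everything after that is a one-line diagram chase plus the routine bookkeeping identifying the module $I(F(X))$ with the free module $X\ot A$, which is already contained in the constructions of Section~\ref{catCA2}.
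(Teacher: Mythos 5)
Your proposal is correct, but it follows a genuinely different route from the paper. Both arguments funnel through Lemma~\ref{when XA is dys}: everything reduces to showing that the free modules $X\ot A$ are dyslectic. The paper gets this by noting that, $F$ being surjective and central, $\D$ identifies with $\C_A$, and the bimodule category ${}_A\C_A$ identifies with the category of $\C$-module endofunctors of $\D$ (via \cite{O}); under this identification $M_\pm$ become the endofunctors of left and right multiplication by $M$, which are isomorphic through the braiding of $\D$, so \emph{every} $A$-module is dyslectic, and Lemma~\ref{when XA is dys} finishes. You instead invoke the doctrinal fact that the right adjoint of a braided strong monoidal functor is braided lax monoidal, with $m_A=J_{\be,\be}$, and a two-line computation at the unit object showing that each $I(W)$, $W\in\D$, with action $J_{W,\be}$, is dyslectic; your displayed identities and the identification of the resulting left actions with those defining $M_\pm$ are correct. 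What each approach buys: the paper's proof is shorter and yields the stronger statement $\C_A=\C_A^0$ at once, but leans on the module-endofunctor identification and genuinely uses surjectivity to have $\D\simeq\C_A$; yours is more hands-on, and in fact does not need surjectivity at all (corestricting $F$ to its image leaves $I(\be)$ unchanged), but it owes the reader the two verifications you rightly flag — the mates computation establishing that $I$ is braided lax monoidal compatibly with the algebra structure of Lemma~\ref{centraletale}, and the projection-formula isomorphism $I(F(X))\cong X\ot A$ of right $A$-modules, needed so that dyslecticity of $I(F(X))$ transfers to the free module before applying Lemma~\ref{when XA is dys}. Both verifications are routine and true as you describe them, so the proof is sound.
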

\begin{proof}
Since $F$ is a central functor,  $\D$ identifies with the category $\C_A$
of $A$-modules in $\C$, cf.\ Section~\ref{eacf}.  We claim that every $A$-module is
dyslectic, i.e., that $\C_A =\C_A^0$.  Indeed,  the fusion category ${}_A \C_A$
identifies with the category of $\C$-module endofunctors of $\D$, see \cite{O}
(the action of $\C$ on $\D$ is defined via $F:\C \to \D$).
Under this identification, for every simple object $M\in \D$ the bimodules $M_\pm$ correspond
to endofunctors of left and right multiplication by $M$.  But these endofunctors
are isomorphic via the braiding of $\D$, i.e., $M$ is dyslectic.

In particular, for every $X\in \C$ the free $A$-module $X\ot A$ is dyslectic.  Hence,
Lemma~\ref{when XA is dys} implies that every $X\in \C$ centralizes $A$, i.e.,
$A\in \C'$.
\end{proof}

\begin{remark}
\label{A=FunG/H}
Note that the \'etale algebra $A$ from Proposition~\ref{AinC'} is a commutative algebra
in a symmetric fusion category $\C'$. Therefore, $A$ belongs to
the maximal Tannakian  subcategory $\E=\Rep(G) \subset \C'$. As is well known, every 
\'etale algebra $A\in\Rep(G)$ is isomorphic to the algebra $\Fun(G/H)$ of functions on $G$ 
invariant under translations by elements of $H$ for a uniquely determined subgroup 
$H\subset G$, the module category $\Rep(G)_A$ is equivalent to $\Rep(H)$ and the functor 
$F_A$ identifies with the restriction functor $\Rep(G)\to \Rep(H)$. In view of $A\in\E$, 
the restriction $F:\E \to F(\E)$ of $F$ to $\E$ identifies with the restriction functor
$\Rep(G)\to \Rep(H)$.
\end{remark}

\begin{corollary}
Let $F: \C_1\to \C_2$ be a surjective braided tensor functor between
braided fusion categories. There exists a braided fusion category $\C$
with an action of a finite group $G$, a subgroup $H\subset G$, and
braided tensor equivalences $\C_1\cong~\C^G,\, \C_2\cong~\C^H$
such that the diagram
\begin{equation}
\xymatrix{
\C_1 \ar[rr]^{F} \ar[d]^{\wr}  &&
\C_2 \ar[d]^{\wr} \\
\C^G \ar[rr]^{\text{Forg}} && \C^H
}
\end{equation}
commutes.
Here $\text{Forg}: \C^G \to \C^H$ is the functor of ``partially forgetting equivariance''.
\end{corollary}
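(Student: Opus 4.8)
The plan is to start from the surjective braided tensor functor $F:\C_1\to\C_2$ and apply Proposition~\ref{AinC'}: its right adjoint $I$ yields a connected \'etale algebra $A=I(\be)\in\C_1$ which actually lies in the symmetric centralizer $\C_1'$. By Lemma~\ref{centraletale} (together with Section~\ref{eacf}) we have a braided identification $\C_2\cong(\C_1)_A$, under which $F$ becomes the free module functor $F_A$. So everything reduces to understanding $(\C_1)_A$ when $A$ is an \'etale algebra sitting in the symmetric fusion subcategory $\C_1'$.

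Next I would invoke Remark~\ref{A=FunG/H}: since $A$ is a commutative algebra in the symmetric fusion category $\C_1'$, it lies in the maximal Tannakian subcategory $\E=\Rep(G)\subset\C_1'$, and there it is isomorphic to $\Fun(G/H)$ for a uniquely determined subgroup $H\subset G$. Now the key structural input is the equivariantization/de-equivariantization dictionary of Section~\ref{eq and deeq}: $\E=\Rep(G)$ embedding into $\Z(\C_1)$ (indeed $\C_1'$ sits inside $\C_1$ which embeds in its own center via~\eqref{C_ZC}) means $\C_1$ de-equivariantizes. Setting $\C:=(\C_1)_{A_\E}$ where $A_\E=\Fun(G)$ is the \emph{full} regular algebra of $\E$, one gets a braided fusion category $\C$ with a $G$-action and a braided equivalence $\C_1\cong\C^G$; this is precisely the content of Example~\ref{regulardeeq} and the fact (cited from~\cite[Section 4.2]{DGNO}) that de-equivariantization and equivariantization are mutually inverse. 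Since $A=\Fun(G/H)$ is an intermediate algebra between $\be$ and $A_\E=\Fun(G)$, the "partial de-equivariantization" $(\C_1)_A$ is the further equivariantization of $\C$ by the subgroup $H$, i.e.\ $\C_2\cong(\C_1)_A\cong\C^H$, and the free module functor $F_A$ matches the forgetful functor $\C^G\to\C^H$ that forgets the $H$-action down from the $G$-action. (Here one uses Example~\ref{GHequiv}: $(\C^{?})$ behaves well under subgroups.) This gives the commuting square.

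The main obstacle I anticipate is making the last identification precise and natural, i.e.\ checking that the de-equivariantization of $\C_1$ by the \emph{partial} algebra $A=\Fun(G/H)$ really is $\C^H$ rather than merely an abstractly equivalent category, and that under the equivalences the functor $F_A$ goes to the genuine "partially forgetting equivariance" functor $\mathrm{Forg}:\C^G\to\C^H$. Concretely: one has $\E_A=\Rep(G)_{\Fun(G/H)}\cong\Rep(H)$ with $F_A$ restricting to the restriction functor $\Rep(G)\to\Rep(H)$ (Remark~\ref{A=FunG/H}), and one must propagate this through the equivalence $(\C_1)_{A_\E}\cong\C$ using that modules over $A=\Fun(G/H)$ in $\C_1$ are the same as $H$-equivariant objects of $\C$; the cleanest way is probably to apply the de-equivariantization functor $(-)_{A_\E}$ to the whole situation and observe that $A$ becomes $\Fun(G/H)$ viewed inside $\Vec_{\text{Rep}(H)}$-land, whence $(\C_1)_A=((\C_1)_{A_\E})^H=\C^H$ by Example~\ref{GHequiv} applied to $H\triangleleft$?\,—one must be slightly careful since $H$ need not be normal in $G$, but forgetting equivariance from $G$ to an arbitrary subgroup $H$ is still well defined and corresponds to the intermediate algebra $\Fun(G/H)$. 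The remaining bookkeeping — braidedness of all the equivalences, compatibility of the central structures — is routine given Lemma~\ref{centraletale}, Lemma~\ref{eacfl}, and Theorem~\ref{petersch}, so I would state it and refer to~\cite[Section 4]{DGNO} rather than grinding through it.
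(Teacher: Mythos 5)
Your proposal follows essentially the same route as the paper's proof: Proposition~\ref{AinC'} gives the connected \'etale algebra $A\in\C_1'$ with $\C_2\cong(\C_1)_A$, Remark~\ref{A=FunG/H} identifies $A=\Fun(G/H)$ inside the maximal Tannakian subcategory $\Rep(G)\subset\C_1'$, de-equivariantization by $\Fun(G)$ gives $\C$ with $\C_1\cong\C^G$, and the identification $(\C_1)_A\cong\C^H$ together with the matching of $F$ with the forgetful functor via the common algebra $A$ is exactly how the paper concludes. Your worry about Example~\ref{GHequiv} and normality of $H$ is resolved the same way the paper handles it, namely by the direct observation that a $\Fun(G/H)$-module in $\C_1$ is the same thing as an $H$-equivariant $\Fun(G)$-module, which requires no normality.
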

\begin{proof}
By Proposition~\ref{AinC'} there is an \'etale algebra $A$ in $\C_1'$  such
that $\C_2 \cong (\C_1)_A$.
Let $\E =\Rep(G)$ be the maximal Tannakian subcategory of $\C_1'$ and let
$\C=(\C_1)_G$.
Since equivariantization and de-equivariantization
are mutually inverse constructions (see \cite[Theorem 4.4]{DGNO} and
Section~\ref{eq and deeq}) we have
$\C_1\cong \C^G$.

By Remark~\ref{A=FunG/H} there is a subgroup $H \subset G$ such that $A =
\Fun(G/H)$.
Note that a $\Fun(G/H)$-module in $\C_1$ is the same thing as an
$H$-equivariant $\Fun(G)$-module,
which implies  $(\C_1)_A \cong ((\C_1)_G)^H =\C^H$. Furthermore, the
forgetful functor $\C^G \to \C^H$
identifies with the given functor $F: \C_1\to \C_2\cong (\C_1)_A$ since both
of them
correspond to the same \'etale algebra $A=\Fun(G/H)$.
\end{proof}

\begin{definition}
A braided fusion category $\C$ is called {\em almost non-degenerate} if the symmetric category
$\C'$  is either trivial or is equivalent  to the category  of super vector spaces.
\end{definition}

In other words, $\C$ is almost non-degenerate if $\C'$ does not contain any non-trivial Tannakian subcategories.

\begin{corollary}
\label{nondeginj}
Any braided tensor functor $F: \C \to \D$ between braided
fusion categories with $\C$ almost non-degenerate is fully faithful.
\end{corollary}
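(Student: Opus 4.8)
The plan is to reduce to the case of a surjective functor and then combine Proposition~\ref{AinC'} with the hypothesis on $\C'$.

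First I would replace $\D$ by the fusion subcategory $\D_0\subseteq\D$ generated by $F(\C)$. Since $F$ is braided, $\D_0$ is a braided fusion subcategory, and $F$ factors as $\C\xrightarrow{F_0}\D_0\hookrightarrow\D$ with $F_0$ surjective and the inclusion fully faithful; so it is enough to prove that $F_0$ is fully faithful. Observe also that $F_0$ carries a canonical central structure: composing $F_0$ with the braided functor $\D_0\to\Z(\D_0)$ of \eqref{C_ZC} produces a braided lift of $F_0$ to $\Z(\D_0)$.

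Next, let $I:\D_0\to\C$ be the right adjoint of $F_0$ and $A:=I(\be)$ the connected \'etale algebra supplied by Lemma~\ref{centraletale}. Since $F_0$ is a surjective braided tensor functor, Proposition~\ref{AinC'} gives $A\in\C'$. The category $\C'$ is symmetric, and because $\C$ is almost non-degenerate it contains no non-trivial Tannakian subcategory, i.e.\ its maximal Tannakian subcategory $\E$ equals $\Vec$. By Remark~\ref{A=FunG/H} the \'etale algebra $A\in\C'$ lies in $\E=\Vec$, and then connectedness forces $\dim_\kk A=\dim_\kk\Hom_\C(\be,A)=1$, so $A\cong\be$.

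It remains to conclude that $F_0$ is an equivalence (hence fully faithful), which then makes $F=(\D_0\hookrightarrow\D)\circ F_0$ fully faithful. The cleanest route is the projection formula for the adjunction $(F_0,I)$: using $F_0(X^*)\cong F_0(X)^*$ one gets a natural isomorphism $I(F_0(X))\cong I(\be)\ot X=A\ot X$, under which the unit $\eta_X:X\to I(F_0(X))$ corresponds to $u_A\ot\id_X$ where $u_A:\be\to A$ is the unit; since $A\cong\be$ the morphism $u_A$ is invertible, so $\eta$ is an isomorphism and $F_0$ is fully faithful. (Equivalently, by Lemma~\ref{centraletale} and its proof the image $\D_0$ of $F_0$ is monoidally equivalent to $\C_A$ with $F_0$ corresponding to the free module functor $X\mapsto X\ot A$, which for $A\cong\be$ is an equivalence.) The step that genuinely needs care is the invocation of Remark~\ref{A=FunG/H} — equivalently, the assertion that a connected \'etale algebra in a symmetric fusion category must lie in its maximal Tannakian subcategory — since this is precisely the point at which almost non-degeneracy of $\C$ is used.
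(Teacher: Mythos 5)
Your proposal is correct and is exactly the argument the paper intends: the corollary is left as an immediate consequence of Proposition~\ref{AinC'} together with Remark~\ref{A=FunG/H}, i.e.\ factor $F$ through the fusion subcategory generated by its image, note the resulting canonical connected \'etale algebra $A=I(\be)$ lies in $\C'$ and hence in the maximal Tannakian subcategory of $\C'$, which is $\Vec$ by almost non-degeneracy, so $A\cong\be$ and $F$ is fully faithful. Your reduction to the surjective case and the projection-formula (or $\C_A$) conclusion are just the routine details the paper omits, so there is nothing essentially different here.
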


\begin{remark}  Using \cite[Theorem 2.5]{EO} and \cite[Proposition 2.14]{De}
one can relax the assumptions of Corollary~\ref{nondeginj}
on the category $\D$: it is enough to assume that $\D$ is a abelian rigid braided tensor category
with finite dimensional $\Hom$ spaces and finite lengths of all objects.
\end{remark}

Let $\C$ be a braided fusion category, $A\in \C$ be a connected \'etale algebra
and $F_A: \C \to \C_A$ be the functor \eqref{functor FA} with the central structure $F'_A$
(\ref{tilde functor FA}). The functor
\begin{equation}
\label{functor TA}
T_A: \C_A\boxtimes \C_A^{\rev}\to {}_A \C_A : M\boxtimes N \mapsto  M_+\otimes_AN_-
\end{equation}
has a natural structure of tensor functor.

\begin{corollary}
\label{tilde F}
Assume $\C$ is almost non-degenerate. Then the functor $F'_A$ in \eqref{tilde functor FA} 
is fully faithful and the functor $T_A: \C_A\boxtimes \C_A^{\rev}\to {}_A \C_A$ defined in 
\eqref{functor TA} is surjective.
\end{corollary}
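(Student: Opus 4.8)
The first assertion is immediate: $F'_A:\C\to\Z(\C_A)$ is a braided tensor functor by the construction \eqref{tilde functor FA}, and $\C$ is almost non-degenerate, so Corollary~\ref{nondeginj} applies directly and shows $F'_A$ is fully faithful.

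For the second assertion the plan is to compare Frobenius--Perron dimensions. First note that, since $A_{+}=A_{-}=A$ is the unit object of ${}_A\C_A$, one has $T_A(M\boxtimes A)=M_+$ and $T_A(A\boxtimes N)=N_-$; hence the image $\D:=\mathrm{Im}(T_A)\subseteq{}_A\C_A$ is exactly the fusion subcategory generated by the two tensor-embedded copies of $\C_A$ (via $(-)_+$ and $(-)_-$), and it suffices to prove $\FPdim(\D)=\FPdim({}_A\C_A)$. Let $J:{}_A\C_A\to\C_A\boxtimes\C_A^{\rev}$ be the right adjoint of $T_A$. Applying \cite[Proposition 8.8]{ENO1} to the surjective corestriction $\C_A\boxtimes\C_A^{\rev}\twoheadrightarrow\D$, exactly as in the proof of Lemma~\ref{dim CAlemma}, gives
\[
\FPdim(\D)=\frac{\FPdim(\C_A\boxtimes\C_A^{\rev})}{\FPdim\bigl(J(\be_{{}_A\C_A})\bigr)}=\frac{\FPdim(\C_A)^2}{\FPdim\bigl(J(\be_{{}_A\C_A})\bigr)},
\]
so I need only compute $J(\be_{{}_A\C_A})$. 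For simple $M,N\in\C_A$ one has, using rigidity of ${}_A\C_A$ together with the fact that $(-)_\pm$ are tensor embeddings (so they send simple objects to simple objects and preserve duals),
\[
\Hom_{{}_A\C_A}\bigl(T_A(M\boxtimes N),\be_{{}_A\C_A}\bigr)=\Hom_{{}_A\C_A}\bigl(M_+\otimes_A N_-,\be_{{}_A\C_A}\bigr)=\Hom_{{}_A\C_A}\bigl(M_+,(N^\vee)_-\bigr),
\]
which is nonzero precisely when $M\cong N^\vee$ in $\C_A$ and $M_+\cong M_-$, i.e.\ when $N\cong M^\vee$ and $M\in\C_A^0$. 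Hence $J(\be_{{}_A\C_A})\cong\bigoplus_{M\in\O(\C_A^0)}M\boxtimes M^\vee$, so $\FPdim\bigl(J(\be_{{}_A\C_A})\bigr)=\sum_{M\in\O(\C_A^0)}\FPdim(M)^2=\FPdim(\C_A^0)$ and therefore $\FPdim(\D)=\FPdim(\C_A)^2/\FPdim(\C_A^0)$.

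It then remains to feed in two dimension identities. First, Schauenburg's theorem in the form of the equivalence \eqref{Schauenburg}, applied to the module category of $A$-modules over $\C$ (which is indecomposable since $A$ is connected), yields $\Z({}_A\C_A)\cong\Z(\C)$, whence $\FPdim({}_A\C_A)=\FPdim(\C)$. Second, I claim $\FPdim(\C_A^0)=\FPdim(\C)/\FPdim(A)^2$; granting this together with $\FPdim(\C_A)=\FPdim(\C)/\FPdim(A)$ from Lemma~\ref{dim CAlemma}, we obtain $\FPdim(\D)=\FPdim(\C_A)^2\FPdim(A)^2/\FPdim(\C)=\FPdim(\C)=\FPdim({}_A\C_A)$, hence $\D={}_A\C_A$ and $T_A$ is surjective. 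The identity $\FPdim(\C_A^0)=\FPdim(\C)/\FPdim(A)^2$ is the only point where almost non-degeneracy is really used, and it is the main obstacle: the natural route is to use that, by the first part of the corollary, $\C$ embeds as a braided fusion subcategory of the non-degenerate category $\Z(\C_A)$, so its centralizer there has Frobenius--Perron dimension $\FPdim(\C_A)^2/\FPdim(\C)=\FPdim(\C)/\FPdim(A)^2$, and then to identify this centralizer (or at least to fit $\C_A^0$ inside it with matching dimension) with $\C_A^0\subset\Z(\C_A)$; alternatively one can quote the corresponding statement from \cite{DGNO}. That the hypothesis is genuinely needed can already be seen for $\C=\Rep(\BZ/2)$ and $A=\Fun(\BZ/2)$: there $\C_A^0=\C_A=\Vec$ while $\FPdim(\C)/\FPdim(A)^2=1/2$, and indeed $T_A$ fails to be surjective.
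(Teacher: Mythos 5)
Your treatment of the first assertion is identical to the paper's (both are just Corollary~\ref{nondeginj}). For the second assertion you take a genuinely different route: the paper's proof is a one-liner observing that $T_A$ is the functor \emph{dual} to $F'_A$ with respect to the module category $\C_A$ (in the sense of \cite[Section 5.7]{ENO1}), so surjectivity of $T_A$ follows from full faithfulness of $F'_A$ by \cite[Proposition 5.3]{ENO1}. You instead run a Frobenius--Perron dimension count, and the bookkeeping in it is correct: the image $\D$ of $T_A$ is the fusion subcategory generated by the two embedded copies of $\C_A$, the right adjoint applied to the unit is $\bigoplus_{M\in\O(\C_A^0)}M\boxtimes M^{\vee}$ (your use of $M_+\simeq M_-$ for simple $M$ is exactly the dyslecticity criterion recalled in the paper), and $\FPdim({}_A\C_A)=\FPdim(\C)$ does follow from ${}_A\C_A\simeq \C^*_{\C_A}$ together with \eqref{Schauenburg}. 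So, as you say, everything reduces to the identity $\FPdim(\C_A^0)=\FPdim(\C)/\FPdim(A)^2$.

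That identity is where the genuine gap lies. In the paper it appears as \eqref{FPdimCA0}, proved via Lemma~\ref{ZCA0} and Corollary~\ref{ZCA}, whose arguments use $\Z(\C)\simeq\C\boxtimes\C^{\rev}$ and hence require $\C$ \emph{non-degenerate}; quoting it in that case would be legitimate and non-circular (those results do not use Corollary~\ref{tilde F}). But the corollary you are proving is stated for \emph{almost} non-degenerate $\C$, which includes the case $\C'$ equivalent to super vector spaces, and there the identity is established neither in the paper nor in \cite{DGNO} (the latter only treats the Tannakian/regular-algebra, i.e.\ de-equivariantization, case), so your fallback citation does not exist. Note that your own computation already yields $\FPdim(\C_A^0)\ge\FPdim(\C)/\FPdim(A)^2$ for free, since $\D\subseteq{}_A\C_A$; what is actually needed is the opposite inequality, and your sketched route for it --- ``identify the centralizer of $F'_A(\C)$ in $\Z(\C_A)$ with $\C_A^0$'' --- is precisely the missing content: one must construct the central structure on dyslectic modules giving a functor $\C_A^0\to\Z(\C_A)$, check it is fully faithful and that it lands in the centralizer of $F'_A(\C)$, none of which is done in your write-up (in the non-degenerate case this identification is essentially Corollary~\ref{ZCA}, whose proof is unavailable here). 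Until that step is supplied, your argument proves the corollary only under the stronger hypothesis that $\C$ is non-degenerate; the paper's duality argument sidesteps the issue entirely.
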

\begin{proof}
The first assertion is Corollary~\ref{nondeginj}.  To prove the second
assertion
observe that that $F'_A$ is dual to $T_A$ (in the sense of \cite[Section
5.7]{ENO1})
with respect to  the module category $\C_A$.  Indeed, an object $M\boxtimes
N$
of $\C_A\boxtimes \C_A^{\rev}$
corresponds to the $\Z(\C_A)$-module endofunctor $M\ot_A - \ot_A N$ of
$\C_A$.
The functor dual to $F'_A$ restricts this endofunctor to the $\C$-module
endofunctor of $\C_A$ by means of $F'_A: \C \to \Z(\C_A)$.  This is
precisely
what $T_A(M\boxtimes N)$ does.
So the result follows from \cite[Proposition 5.3]{ENO1}.
\end{proof}

\subsection{Tensor complements}
Let $\C$ be a non-degenerate braided fusion category, see Definition~\ref{nondegdef}. 
Let $A\in \C$ be a connected \'etale
algebra. Then $A$ can be considered as a connected \'etale algebra in $\C^{\rev}$
and in $\Z(\C)$ via the embedding 
\[
\C^{\rev}= \Vec \boxtimes \C^{\rev}\hookrightarrow \C \boxtimes \C^{\rev}
\cong \Z(\C), 
\]
see \eqref{Gfun}. 

\begin{lemma} \label{ZCA0}
Under the identification $\Z(\C)\simeq \C \boxtimes \C^{\rev}$ we have
\begin{equation*}
\Z(\C)_A=\C \boxtimes \C^{\rev}_A \quad \mbox{and} \quad
\Z(\C)_A^0=\C \boxtimes (\C^{\rev})_A^0.
\end{equation*}
\end{lemma}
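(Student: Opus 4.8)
The strategy is to reduce everything to the observation that $A$, considered as an algebra in $\Z(\C)\simeq\C\boxtimes\C^{\rev}$, lies in the second tensor factor $\Vec\boxtimes\C^{\rev}$, and to combine this with the fact that an $A$-module in a tensor product category decomposes as a tensor product of an object of the first factor with an $A$-module in the second. Concretely, since $A=\be\boxtimes A'$ for the corresponding algebra $A'\in\C^{\rev}$, a right $A$-module in $\C\boxtimes\C^{\rev}$ is the same as an object $X\in\C$ together with a right $A'$-module structure on a second tensor factor; this yields a canonical equivalence of abelian categories $\Z(\C)_A\cong\C\boxtimes(\C^{\rev})_{A'}=\C\boxtimes\C^{\rev}_A$, and one checks it respects the module-category (hence tensor, after passing to bimodules) structure. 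I would make this precise by noting that $\Z(\C)_A$, as a module category over $\Z(\C)=\C\boxtimes\C^{\rev}$, is $\C\boxtimes\C^{\rev}_A$ with $\C$ acting on the first factor through the identity and $\C^{\rev}$ acting on $\C^{\rev}_A$ via the free-module functor $F_A$ of Subsection~\ref{catCA2}.

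For the first identity I would argue as follows. The free $A$-module functor $F_A^{\Z}:\Z(\C)\to\Z(\C)_A$ sends $X\boxtimes Y\mapsto (X\boxtimes Y)\otimes(\be\boxtimes A')=X\boxtimes(Y\otimes A')$, which under $\C\boxtimes(\text{--})$ is exactly $\id_\C\boxtimes F_A$ applied to $X\boxtimes Y$. Since $F_A^{\Z}$ is surjective (Lemma~\ref{dim CAlemma} and the surrounding discussion) and $\id_\C\boxtimes F_A$ is surjective, and both target categories are semisimple with matching Frobenius--Perron dimension — by Lemma~\ref{dim CAlemma} one has $\FPdim(\Z(\C)_A)=\FPdim(\Z(\C))/\FPdim(A)=\FPdim(\C)^2/\FPdim(A)$ while $\FPdim(\C\boxtimes\C^{\rev}_A)=\FPdim(\C)\cdot\FPdim(\C^{\rev})/\FPdim(A)$, and $\FPdim(\C^{\rev})=\FPdim(\C)$ — the natural comparison functor is an equivalence. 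The compatibility with the $\otimes_A$ structure follows because the bimodule categories likewise match: ${}_A\Z(\C)_A\cong\C\boxtimes{}_{A}\C^{\rev}_{A}$, and this identification is monoidal since on free modules it reduces to the evident isomorphism of the $\boxtimes$-structures. (Alternatively, one invokes Remark~\ref{A=FunG/H}-type bookkeeping only at the level of free modules, which suffices to pin down the tensor structure.)

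For the second identity, $\Z(\C)_A^0=\C\boxtimes(\C^{\rev})_A^0$, I would use the characterization of dyslectic modules via the double braiding (Definition~\ref{def dysl} and Lemma~\ref{when XA is dys}). Under the identification $\Z(\C)\simeq\C\boxtimes\C^{\rev}$ with $A$ in the second factor, the braiding of $A$ with an object $X\boxtimes Y$ only involves $Y$ and the braiding of $\C^{\rev}$; hence the double-braiding condition defining dyslecticity of an $A$-module $X\boxtimes M$ in $\Z(\C)_A=\C\boxtimes\C^{\rev}_A$ is vacuous on the $X$-slot and coincides exactly with the condition that $M\in(\C^{\rev})_A^0$. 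Since every object of $\C\boxtimes\C^{\rev}_A$ is a subquotient of $\boxtimes$-decomposable ones and dyslecticity is preserved under direct sums and summands, this gives $\Z(\C)_A^0=\C\boxtimes(\C^{\rev})_A^0$ as abelian categories, and the braided structure agrees because the braiding of $\C_A^0$ (resp. $\Z(\C)_A^0$) is induced from that of the ambient category, which respects the $\boxtimes$-decomposition by construction.

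**Main obstacle.** The routine part is the set-theoretic/abelian-categorical bookkeeping; the only place that requires care is verifying that the comparison functor $\Z(\C)_A\to\C\boxtimes\C^{\rev}_A$ is not merely an equivalence of abelian (or even module) categories but genuinely intertwines the two \emph{tensor} structures $\otimes_A$ — i.e., that the isomorphism ${}_A\Z(\C)_A\cong\C\boxtimes{}_A\C^{\rev}_A$ is monoidal and not just additive. I expect to handle this by checking it on free modules, where the relative tensor product $\otimes_A$ reduces to the absolute $\otimes$ in the second factor and the statement becomes the associativity/functoriality of $\boxtimes$; semisimplicity then propagates the identification to all objects. The dyslecticity statement is comparatively soft once the first identity and the $\boxtimes$-form of the braiding are in hand.
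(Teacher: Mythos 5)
Your proposal is correct and follows the only natural route, which is exactly what the paper has in mind: since $A$ sits in the second Deligne factor $\Vec\boxtimes\C^{\rev}\subset\C\boxtimes\C^{\rev}\simeq\Z(\C)$, both the module category and the dyslecticity condition decompose factorwise (the paper simply declares the first identity obvious and the second an immediate consequence). Your extra bookkeeping (the Frobenius--Perron dimension count and the check of $\otimes_A$ on free modules) is sound, though not strictly needed once full faithfulness and essential surjectivity of the comparison functor are observed directly via separability of $A$ and semisimplicity.
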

\begin{proof} 
The first statement is obvious and the second one is an immediate consequence.
\end{proof}

\begin{corollary} \label{ZCA}
For a non-degenerate $\C$ and a connected \'etale algebra $A\in \C$
there is a braided equivalence $\Z(\C_A)\simeq \C \boxtimes (\C_A^0)^{\rev}$.
In particular the category $\C_A^0$ is non-degenerate.
\end{corollary}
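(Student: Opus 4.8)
The plan is to deduce Corollary~\ref{ZCA} directly from Lemma~\ref{ZCA0} together with Schauenburg's theorem, specialized to the case $\A=\C_A$. First I would recall from Remark~\ref{left=right}(i) that for the connected \'etale algebra $A\in\C$, viewed as a connected \'etale algebra in $\Z(\C)$ via the functor \eqref{C_ZC}, the category $\Rep_{\C}(A)$ coincides with $\C_A$ (with opposite tensor structure, which is harmless here since we only care about $\Z$). More precisely, I want to apply Theorem~\ref{petersch} with $\A$ replaced by $\C_A$: there is an \'etale algebra, call it $\widetilde A:=F_A'(A)\in\Z(\C_A)$ where $F_A':\C\to\Z(\C_A)$ is the central functor \eqref{tilde functor FA}, and one identifies $\Rep_{\C_A}(\widetilde A)\cong(\C_A)_A$ — but the cleaner route is to observe that $\C_A = \Rep_\C(A)$ with $A\in\Z(\C)$, so Theorem~\ref{petersch} directly gives a braided equivalence $\Z(\C)_A^0 \cong \Z(\Rep_\C(A)) = \Z(\C_A)$.

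With that identification in hand, the second statement of Lemma~\ref{ZCA0} says $\Z(\C)_A^0 = \C\boxtimes(\C^{\rev})_A^0$. Now I need to reconcile $(\C^{\rev})_A^0$ with $(\C_A^0)^{\rev}$. The point is that $\C_A$ and $(\C^{\rev})_A$ are the same underlying category (the category of right $A$-modules in the common underlying fusion category), and the two braidings differ by reversal; the dyslexia condition $M_+\simeq M_-$ is symmetric under swapping $c$ with $\tilde c = c^{-1}_{\mathrm{rev}}$, so $(\C^{\rev})_A^0$ and $\C_A^0$ agree as categories, and the braiding on $(\C^{\rev})_A^0$ is the reverse of that on $\C_A^0$. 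Hence $(\C^{\rev})_A^0 = (\C_A^0)^{\rev}$ as braided fusion categories. Combining, $\Z(\C_A)\cong\Z(\C)_A^0 = \C\boxtimes(\C_A^0)^{\rev}$, which is the asserted equivalence.

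For the last sentence — that $\C_A^0$ is non-degenerate — I would use the fact that for any fusion category $\B$, the Drinfeld center $\Z(\B)$ is non-degenerate, together with the decomposition just obtained. We have $\Z(\C_A)\cong\C\boxtimes(\C_A^0)^{\rev}$, and $\Z(\C_A)$ is non-degenerate. Since $\C$ is non-degenerate by hypothesis, and the centralizer of a tensor factor in a product is controlled by $(\X_1\boxtimes\X_2)' = \X_1'\boxtimes\X_2'$ (using $\FPdim$ multiplicativity and the properties listed after Definition~\ref{nondegdef}), non-degeneracy of $\C\boxtimes(\C_A^0)^{\rev}$ forces $((\C_A^0)^{\rev})' = \Vec$, i.e.\ $(\C_A^0)^{\rev}$ is non-degenerate, hence so is $\C_A^0$ (reversal preserves non-degeneracy).

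The main obstacle I anticipate is the bookkeeping around opposite tensor structures and reversed braidings: Remark~\ref{left=right}(i) warns that the tensor structures on $\C_A$ and $\Rep_\C(A)$ are opposite, and one must check this does not disturb the identification of centers (it does not, since $\Z(\B)\cong\Z(\B^{\mathrm{op}})$ compatibly), and similarly one must be careful that ``$(\C^{\rev})_A^0 = (\C_A^0)^{\rev}$'' is the correct matching of braidings rather than an accidental sign error. Once those identifications are pinned down precisely, the rest is formal: Lemma~\ref{ZCA0} plus Theorem~\ref{petersch} plus non-degeneracy of Drinfeld centers.
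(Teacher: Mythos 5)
Your overall strategy is exactly the paper's (its proof is literally ``combine Theorem~\ref{petersch} and Lemma~\ref{ZCA0}''), and the auxiliary points you raise are correct: $(\C^{\rev})_A^0=(\C_A^0)^{\rev}$ does hold (the dyslexia condition is insensitive to reversing the braiding, while the induced braiding on dyslectic modules gets reversed); $\Z(\B)\simeq\Z(\B^{\op})$ holds as braided categories, e.g.\ by \eqref{Schauenburg} since $\B^{\op}\cong\B^*_\M$ for $\M=\B$ with the right action $M\ot X:=X^*\ot M$; and your deduction of non-degeneracy of $\C_A^0$ from non-degeneracy of $\Z(\C_A)$ and of $\C$ is fine.

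The one flaw is a mismatch between two different embeddings of $A$ into $\Z(\C)$. You place $A$ in $\Z(\C)$ via \eqref{C_ZC}, i.e.\ as $A\bt\be$ under $\Z(\C)\simeq\C\bt\C^{\rev}$, so that Remark~\ref{left=right}(i) identifies $\Rep_\C(A)$ with $\C_A$ up to tensor-opposite. But Lemma~\ref{ZCA0} is stated --- and is only true --- for $A$ embedded via the second factor, $\C^{\rev}=\Vec\bt\C^{\rev}\hookrightarrow\Z(\C)$, as its preamble specifies. For your embedding the correct computation is $\Z(\C)^0_{A\bt\be}=\C_A^0\bt\C^{\rev}$, not $\C\bt(\C^{\rev})_A^0$; these two are reverses of one another, not equal. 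As it happens, your two simplifications cancel: replacing $\Z(\Rep_\C(A))=\Z(\C_A^{\op})$ by $\Z(\C_A)$ silently changes the braiding by a reversal (the canonical statement is $\Z(\C_A^{\op})\simeq\Z(\C_A)^{\rev}$), and this exactly compensates the reversal lost by quoting the wrong-factor formula, so your final equivalence $\Z(\C_A)\simeq\C\bt(\C_A^0)^{\rev}$ is correct --- but as written the chain contains an intermediate equality that is false for the algebra as you embedded it. The clean fix is to follow the paper: embed $A$ via $\C^{\rev}$, so that Lemma~\ref{ZCA0} applies verbatim and $\Rep_\C(A)$ is again $\C_A$ up to tensor-opposite, whose center you identify with $\Z(\C_A)$ by the fact you already invoked; alternatively, keep your embedding, prove the first-factor analogue $\Z(\C)^0_{A\bt\be}=\C_A^0\bt\C^{\rev}$, and pass to reverses at the very end.
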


\begin{proof} Combine Theorem \ref{petersch} and Lemma \ref{ZCA0}.
\end{proof}

\begin{remark} \label{fprime}
\begin{enumerate}
\item[(i)] One can show that the embedding functor 
\[
\C =\C \boxtimes \Vec \hookrightarrow \C \boxtimes (\C_A^0)^{\rev}\cong \Z(\C_A)
\]
is naturally isomorphic to the functor $F'_A$ from \eqref{tilde functor FA}, providing
an alternative proof of the injectivity of that functor, as asserted in Corollary \ref{tilde F}.
\item[(ii)] If we assume in addition that $\C$ is modular
and $A$ is as in Example \ref{regularce}(ii) then $\C_A^0$
has a natural spherical structure, see e.g. \cite{KiO}. In this case
Corollary \ref{ZCA} gives an alternative proof of \cite[Theorem 4.5]{KiO}.
\end{enumerate}
\end{remark}

\begin{corollary} For a non-degenerate $\C$ and a connected \'etale algebra $A\in \C$
we have
{\em \begin{equation}
\label{FPdimCA0}
\FPdim(\C_A^0)=\frac{\FPdim(\C)}{\FPdim(A)^2}.
\end{equation}}
\end{corollary}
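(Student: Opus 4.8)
The plan is to derive the Frobenius--Perron dimension formula \eqref{FPdimCA0} by composing the two dimension formulas already established in the excerpt, namely Lemma~\ref{dim CAlemma} applied twice: once to $\C$ and once to the fusion category $\Z(\C_A)$, together with the multiplicativity of $\FPdim$ under $\boxtimes$ and the known value $\FPdim(\Z(\D))=\FPdim(\D)^2$ from \eqref{dimZ}.

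First I would invoke Corollary~\ref{ZCA}, which gives a braided equivalence $\Z(\C_A)\simeq \C\boxtimes(\C_A^0)^{\rev}$. Taking Frobenius--Perron dimensions and using that $\FPdim(\A_1\boxtimes\A_2)=\FPdim(\A_1)\FPdim(\A_2)$ and that $\FPdim(\D^{\rev})=\FPdim(\D)$ (the underlying fusion category is unchanged), this yields
\begin{equation*}
\FPdim(\Z(\C_A))=\FPdim(\C)\,\FPdim(\C_A^0).
\end{equation*}
On the other hand, by \eqref{dimZ} we have $\FPdim(\Z(\C_A))=\FPdim(\C_A)^2$, and by Lemma~\ref{dim CAlemma}, $\FPdim(\C_A)=\FPdim(\C)/\FPdim(A)$. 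Substituting gives $\FPdim(\C)^2/\FPdim(A)^2=\FPdim(\C)\,\FPdim(\C_A^0)$. Dividing by $\FPdim(\C)$ (which is nonzero) produces exactly \eqref{FPdimCA0}.

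There is essentially no obstacle here: the result is a short bookkeeping computation once Corollary~\ref{ZCA} is available. The only point requiring a word of care is the behavior of $\FPdim$ under passage to the reverse category and under $\boxtimes$; both are immediate from the definition \eqref{FPdim def}, since reversing the braiding changes neither the set $\O(\cdot)$ of simple objects nor their Frobenius--Perron dimensions, and the simple objects of $\A_1\boxtimes\A_2$ are the external products $X_1\boxtimes X_2$ with $\FPdim(X_1\boxtimes X_2)=\FPdim(X_1)\FPdim(X_2)$. Alternatively, one could avoid Corollary~\ref{ZCA} entirely and apply Lemma~\ref{dim CAlemma} directly inside $\Z(\C)\simeq\C\boxtimes\C^{\rev}$ using Lemma~\ref{ZCA0}: there $A$ lives in the $\C^{\rev}$ factor, so $\FPdim(\Z(\C)_A^0)=\FPdim(\C)\FPdim((\C^{\rev})_A^0)$, while $\Z(\C)_A^0\simeq\Z(\Rep_\C(A))$ has dimension $\FPdim(\C_A)^2$ by Theorem~\ref{petersch} and \eqref{dimZ}; comparing and using Lemma~\ref{dim CAlemma} again gives the same conclusion.
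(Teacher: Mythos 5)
Your proof is correct and follows the paper's own argument exactly: the paper also deduces \eqref{FPdimCA0} immediately from Corollary~\ref{ZCA} combined with \eqref{dimZ} and \eqref{FPdimCA}, with the same bookkeeping about $\FPdim$ under $\boxtimes$ and reversal that you spell out. Nothing further is needed.
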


\begin{proof} This follows immediately from Corollary \ref{ZCA} and equations
 \eqref{dimZ} and \eqref{FPdimCA}.
\end{proof}

\section{Quantum Manin pairs} \label{qmp}

\subsection{Definition of a quantum Manin pair}
\label{defQMp}

We start with the following consequence of Corollary~\ref{tilde F}.

\begin{corollary} \label{equ} Let $\C$ be a non-degenerate braided fusion category and
$A\in \C$ a connected \'etale algebra in $\C$.
Assume that {\em $\FPdim(A)^2=\FPdim(\C)$}.
Then 
\begin{enumerate}
\item[(i)] The functor $F'_A: \C \to \Z(\C_A)$ defined in \eqref{tilde functor FA}
is a braided tensor equivalence
\item[(ii)] The functor $T_A: \C_A\boxtimes \C_A^{\rev}\to {}_A \C_A$ 
defined in \eqref{functor TA} is a tensor equivalence.
\end{enumerate}
\end{corollary}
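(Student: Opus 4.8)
The plan is to deduce both statements from the already-established dimension count together with Corollary~\ref{tilde F} and the Frobenius--Perron dimension formulas. First I would observe that $\C$ is in particular almost non-degenerate, so Corollary~\ref{tilde F} applies: the functor $F'_A\colon\C\to\Z(\C_A)$ is fully faithful and $T_A\colon\C_A\boxtimes\C_A^{\rev}\to{}_A\C_A$ is surjective. Since $F'_A$ is a fully faithful tensor functor between fusion categories, by \cite[Proposition 2.19]{EO} it is an equivalence as soon as $\FPdim(\C)=\FPdim(\Z(\C_A))$, so part~(i) reduces to checking this equality of dimensions.

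For the dimension bookkeeping I would compute $\FPdim(\Z(\C_A))=\FPdim(\C_A)^2$ by \eqref{dimZ}, and $\FPdim(\C_A)=\FPdim(\C)/\FPdim(A)$ by Lemma~\ref{dim CAlemma}. Hence
\[
\FPdim(\Z(\C_A))=\frac{\FPdim(\C)^2}{\FPdim(A)^2}=\FPdim(\C),
\]
where the last step uses the hypothesis $\FPdim(A)^2=\FPdim(\C)$. This gives (i). Alternatively, and perhaps more transparently, one can invoke Corollary~\ref{ZCA}, which already identifies $\Z(\C_A)\simeq\C\boxtimes(\C_A^0)^{\rev}$; then $\FPdim(\C_A^0)=\FPdim(\C)/\FPdim(A)^2=1$ by \eqref{FPdimCA0}, so $\C_A^0=\Vec$ and the embedding $\C=\C\boxtimes\Vec\hookrightarrow\C\boxtimes(\C_A^0)^{\rev}\cong\Z(\C_A)$ of Remark~\ref{fprime}(i), which is $F'_A$, is an equivalence. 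I would likely present this second route since it makes the role of the hypothesis most vivid.

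For part~(ii), the functor $T_A$ is a tensor functor between fusion categories (its source $\C_A\boxtimes\C_A^{\rev}$ is fusion because $\C_A$ is, and its target ${}_A\C_A$ is fusion as noted in Subsection~\ref{catCA2}), and it is surjective by Corollary~\ref{tilde F}. A surjective tensor functor between fusion categories of equal Frobenius--Perron dimension is an equivalence, again by \cite[Proposition 2.19]{EO} (a surjective tensor functor of equal FP-dimension is fully faithful, hence an equivalence). So it suffices to match dimensions: $\FPdim(\C_A\boxtimes\C_A^{\rev})=\FPdim(\C_A)^2=\FPdim(\C)^2/\FPdim(A)^2=\FPdim(\C)$, while $\FPdim({}_A\C_A)=\FPdim(\C_A)\cdot\FPdim(A)=\FPdim(\C)$ since ${}_A\C_A$ is the dual of $\C$ with respect to the module category $\C_A$ and $\FPdim({}_A\C_A)=\FPdim(\C)$ by \eqref{dimZ}-type reasoning (the dual category has the same FP-dimension as the original one, \cite[Proposition 5.3 / Corollary 5.4]{ENO1}, applied to the module category $\C_A$). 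Both equal $\FPdim(\C)$, so $T_A$ is an equivalence.

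The main obstacle I anticipate is pinning down the precise form of the ``surjective tensor functor of equal dimension is an equivalence'' statement and the identification $\FPdim({}_A\C_A)=\FPdim(\C)$ with exact citations; the first is the dual/Frobenius reciprocity content of \cite[Proposition 8.8 and Section 5.7]{ENO1} combined with \cite[Proposition 2.19]{EO}, and the second follows because $\C_A$ is an indecomposable $\C$-module category with dual $({}_A\C_A)^{\rev}$ and the Morita invariance of FP-dimension. Everything else is a direct substitution into formulas already proved in the excerpt.
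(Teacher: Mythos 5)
Your argument for part~(i) is exactly the paper's: non-degeneracy gives almost non-degeneracy, Corollary~\ref{tilde F} gives full faithfulness of $F'_A$, and the dimension count $\FPdim(\Z(\C_A))=\FPdim(\C_A)^2=\FPdim(\C)^2/\FPdim(A)^2=\FPdim(\C)$ via Lemma~\ref{dim CAlemma} and \eqref{dimZ} lets you conclude with \cite[Proposition 2.19]{EO}. Your preferred ``second route'' through Corollary~\ref{ZCA} is also legitimate (no circularity, since that corollary rests on Theorem~\ref{petersch}), but be aware it leans on the identification of the embedding $\C\hookrightarrow\C\boxtimes(\C_A^0)^{\rev}$ with $F'_A$, which the paper only asserts in Remark~\ref{fprime}(i) (``one can show''); so the first route is the cleaner one to present. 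For part~(ii) you diverge slightly from the paper: the paper simply observes that $T_A$ is the dual of $F'_A$ with respect to the module category $\C_A$, and the dual of an equivalence is an equivalence, whereas you run a second dimension count using the surjectivity of $T_A$ from Corollary~\ref{tilde F}. That works, but two citations should be fixed: the criterion you need is that a \emph{surjective} tensor functor between fusion categories of equal Frobenius--Perron dimension is an equivalence, which is \cite[Proposition 2.20]{EO} rather than Proposition 2.19 (the latter is the fully faithful case), and the equality $\FPdim({}_A\C_A)=\FPdim(\C)$ is the Morita-invariance of Frobenius--Perron dimension for the dual category with respect to the indecomposable module category $\C_A$ (\cite[Section 8]{ENO1}), not Proposition 5.3/Corollary 5.4 of \cite{ENO1}, which concern injectivity/surjectivity of dual functors. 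With those citations repaired, both of your routes are sound; the paper's one-line duality argument for (ii) is shorter, while yours has the minor virtue of not needing the ``dual of an equivalence'' principle explicitly.
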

\begin{proof} By Lemma~\ref{dim CAlemma}
\[
\FPdim(\C_A)=\frac{\FPdim(\C)}{\FPdim(A)}.
\]
Hence,  
\[
\FPdim(\Z(\C_A))=\frac{\FPdim(\C)^2}{\FPdim(A)^2}=\FPdim(\C), 
\]
see \eqref{dimZ}.
Since by Corollary~\ref{tilde F} $F'_A$ is a fully faithful functor between categories
of equal Frobenius-Perron dimension,  it is necessarily  an equivalence by
\cite[Proposition 2.19]{EO}. Hence the dual
functor $T_A$ is also an equivalence.
\end{proof}

\begin{definition}\label{qMp}
 A {\em quantum Manin pair} $(\C,A)$ consists of a non-degenerate braided
fusion category $\C$ and a connected \'etale algebra $A\in \C$ such that $\FPdim(A)^2=\FPdim(\C)$.
\end{definition}

\begin{remark} \label{M2rep0}
Observe that by \eqref{FPdimCA0} the condition $\FPdim(A)^2=\FPdim(\C)$ is equivalent to the
condition $\C_A^0=\Vec$.
\end{remark}

Quantum Manin pairs form a 2-groupoid $\QM$: a 1-morphism between two such pairs $(\C_1,\,A_1)$
and $(\C_2,\,A_2)$ is defined to be a pair $(\Phi,\, \phi)$, where $\Phi: \C_1\simeq \C_2$ is a braided
equivalence and $\phi: \Phi(A_1)\iso A_2$ is an isomorphism of algebras; a 2-morphism between
pairs $(\Phi,\,\phi)$ and $(\Phi',\,\phi')$ is a natural isomorphism of tensor functors
$\mu:\Phi \xrightarrow{\sim} \Phi'$ such that the following diagram commutes:
\begin{equation}
\label{2mor in QP}
\xymatrix{\Phi(A_1)\ar[rr]^{\mu} \ar[rd]_{\phi}&&\Phi'(A_1)\ar[ld]^{\phi'}\\ &A_2&}.
\end{equation}
On the other hand, we have the 2-groupoid $\FC$ of fusion categories: objects are fusion categories,
1-morphisms are tensor equivalences, and 2-morphisms are isomorphisms of tensor
functors. We have a 2-functor $\QM \to \FC$ defined by $(\C,\,A)\mapsto \C_A$.

\begin{proposition}
\label{dva}
This 2-functor $\QM \to \FC$ is a 2-equivalence.
\end{proposition}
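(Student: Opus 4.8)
The plan is to construct a 2-functor $\FC\to\QM$ in the opposite direction and show that the two composites are 2-naturally isomorphic to the respective identity 2-functors. The candidate inverse sends a fusion category $\A$ to the pair $(\Z(\A),\, A_\A)$, where $A_\A=I(\be)$ is the canonical connected \'etale algebra produced by Lemma~\ref{centraletale} applied to the forgetful central functor $\Z(\A)\to\A$; on 1-morphisms, a tensor equivalence $\A_1\simeq\A_2$ induces a braided equivalence $\Z(\A_1)\simeq\Z(\A_2)$ carrying $A_{\A_1}$ to $A_{\A_2}$ (the construction of $A_\A$ being functorial in $\A$), and on 2-morphisms it acts in the evident way. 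One must first check that $(\Z(\A),A_\A)$ is genuinely a quantum Manin pair: $\Z(\A)$ is non-degenerate, and $\FPdim(A_\A)=\FPdim(\A)$ by the computation in Lemma~\ref{dim CAlemma} together with the fact (Lemma~\ref{centraletale}) that $\Z(\A)_{A_\A}$ is equivalent to the image of $\Z(\A)\to\A$, which is all of $\A$; hence $\FPdim(A_\A)^2=\FPdim(\A)^2=\FPdim(\Z(\A))$ by \eqref{dimZ}.

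Next I would exhibit the two natural isomorphisms of 2-functors. Going $\QM\to\FC\to\QM$, a pair $(\C,A)$ is sent to $(\Z(\C_A),\, A_{F_A})$, where $F_A:\C\to\C_A$ is the central functor of Subsection~\ref{eacf}. By Corollary~\ref{equ}(i), since $(\C,A)$ is a quantum Manin pair, the functor $F'_A:\C\to\Z(\C_A)$ is a braided equivalence; and by Lemma~\ref{eacfl} the algebra $A_{F_A}$ recovered from $F_A$ via Lemma~\ref{centraletale} is isomorphic to $A$, with the isomorphism compatible with $F'_A$. This gives a 1-isomorphism $(\C,A)\simeq(\Z(\C_A),A_{F_A})$ in $\QM$; one checks naturality in $(\C,A)$ by chasing the construction of $F_A$ and $A_{F_A}$ through a 1-morphism of quantum Manin pairs. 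Going $\FC\to\QM\to\FC$, a fusion category $\A$ is sent to $\Z(\A)_{A_\A}$, and by the last sentence of Lemma~\ref{centraletale} this is monoidally equivalent to the image of the forgetful functor $\Z(\A)\to\A$, i.e.\ to $\A$ itself; again one must verify this equivalence is 2-natural in $\A$.

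The main obstacle, I expect, is not any single hard lemma — all the essential analytic input (the dimension count, the equivalence $F'_A$, Schauenburg's theorem, the reconstruction Lemma~\ref{eacfl}) is already in place — but rather the bookkeeping needed to promote these object-level statements to a genuine equivalence of \emph{2-groupoids}: namely checking that the chosen 1-isomorphisms assemble into 2-natural transformations, and that the assignments on 2-morphisms are compatible with the commuting-triangle condition \eqref{2mor in QP}. Concretely, the delicate point is that the algebra $A_\A=I(\be)$ depends on a choice of right adjoint $I$ to the forgetful functor, and one must check that different choices give canonically isomorphic data so that functoriality is unambiguous; similarly, the isomorphism $A_{F_A}\cong A$ from Lemma~\ref{eacfl} must be pinned down canonically enough to be natural. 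I would handle this by noting that $I(\be)$, as the object representing $X\mapsto\Hom_\A(F(X),\be)$, is unique up to unique isomorphism, so all the relevant diagrams commute on the nose once representatives are fixed, and then the 2-naturality checks reduce to routine (if lengthy) diagram chases which I would only indicate rather than carry out in full.
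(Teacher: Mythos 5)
Your proposal is correct and follows essentially the same route as the paper: construct the inverse 2-functor $\A\mapsto(\Z(\A),I(\be))$, verify via Lemma~\ref{centraletale} and a dimension count that this is a quantum Manin pair, and deduce quasi-inverseness from Corollary~\ref{equ} together with Lemma~\ref{eacfl} and the last statement of Lemma~\ref{centraletale}. The only cosmetic difference is that you derive $\FPdim(I(\be))=\FPdim(\A)$ from Lemmas~\ref{centraletale} and~\ref{dim CAlemma} (using surjectivity of the forgetful functor) where the paper cites \cite[Lemma 3.41]{EO}, and you spell out the 2-naturality bookkeeping that the paper leaves implicit.
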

\begin{proof} Let $\A\in\FC$. The forgetful functor $F:\Z(\A)\to \A$ has an obvious structure of
central functor. Let $I:\A\to\Z(\A)$ be its right adjoint. By Lemma~\ref{centraletale}, $I(\be)$  is a
connected \'etale algebra. It is known that $\FPdim(I(\be))=\FPdim(\C)$, see e.g. \cite[Lemma 3.41]{EO}.
So \eqref{dimZ} implies that $(\Z(\A),\,I(\be))\in \QM$. Thus we get a 2-functor $\FC\to\QM$.
Using Corollary \ref{equ} and the results from Section~ \ref{eacf} we see that  it is quasi-inverse
to the 2-functor $\QM \to \FC$.
\end{proof}

\begin{remark}
\label{Manin 2}
Proposition \ref{dva} can be viewed as a
categorical analogue of the following reconstruction of the double
of a quasi-Lie bialgebra from a Manin pair (i.e., a pair
consisting of a metric Lie algebra and its Lagrangian subalgebra)
in the theory of quantum groups \cite[Section 2]{Dr}:

Let $\mathfrak{g}$ be a finite-dimensional metric Lie algebra
(i.e., a Lie algebra on which a non-degenerate invariant symmetric
bilinear form is given). Let $\mathfrak{l}$ be a Lagrangian
subalgebra of $\mathfrak{g}$. Then $\mathfrak{l}$ has  a structure
of a quasi-Lie bialgebra and there is an isomorphism  between
$\mathfrak{g}$ and the double $\mathfrak{D}(\mathfrak{l})$ of
$\mathfrak{l}$. The correspondence between Lagrangian subalgebras
of   $\mathfrak{g}$ and  doubles isomorphic to $\mathfrak{g}$ is
bijective, see \cite[Section 2]{Dr} for details.
\end{remark}


\subsection{Lagrangian algebras and module categories}
\label{QM and Morita2}

\begin{definition}
Let $\C$ be a non-degenerate braided fusion category.  A connected \'etale algebra
in $\C$ will be called {\em Lagrangian} if  $\FPdim(A)^2 =\FPdim(\C)$.
\end{definition}

Thus,  $A$ is Lagrangian if and only if $(\C,\, A)$ is a quantum Manin pair.

\begin{remark}
Let $\E \subset \C$  be a Lagrangian subcategory of $\C$, i.e., a Tannakian
subcategory such that $\E' =\E$, see \cite[Definition 4.57]{DGNO}. Then
the regular algebra $A$ of $\E$ is a Lagrangian algebra in $\C$. Indeed, Example \ref{tannaka rep0} 
says that $\C_A^0=\Vec$ and the statement follows from Remark \ref{M2rep0}.
\end{remark}

\begin{proposition}
\label{Lagr<--> module}
Let $\A$ be a fusion category and let $\C=\Z(\A)$. There is a bijection between the sets of Lagrangian
algebras in $\C$ and indecomposable $\A$-module categories.
\end{proposition}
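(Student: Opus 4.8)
The plan is to exhibit the bijection via the passage between Lagrangian algebras in $\Z(\A)$ and the \'etale algebra picture of module categories, using Proposition~\ref{dva} and Schauenburg's Theorem~\ref{petersch} as the two main tools. Given a Lagrangian algebra $A\in\C=\Z(\A)$, the pair $(\C,A)$ is a quantum Manin pair by definition, so Corollary~\ref{equ} gives $\C_A^0=\Vec$ and a braided equivalence $F'_A:\C\iso\Z(\C_A)$; thus $\C_A$ is a fusion category $\B$ with $\Z(\B)\cong\Z(\A)$ as braided categories. By the theory of module categories (the equivalence \eqref{Schauenburg} of Schauenburg and the fact, from \cite[Section 5.7]{ENO1}, that fusion categories with braided-equivalent centers are precisely the duals $\A^*_\M$ of $\A$ with respect to indecomposable module categories $\M$), such a $\B$ is the same datum as an indecomposable $\A$-module category $\M$ with $\A^*_\M\cong\B$. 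I would therefore define the map $A\mapsto \M$ by taking $\M$ to be the $\A$-module category corresponding to the fusion category $\C_A$ under this center-preserving correspondence; concretely $\M = \Rep_\A(A)$ viewed as a module category over $\A$, since $\Z(\Rep_\A(A))\cong\Z(\A)_A^0$ by Theorem~\ref{petersch}.

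The first key step is to make precise the correspondence "fusion category with center $\Z(\A)$ $\leftrightarrow$ indecomposable $\A$-module category" and to check it is a bijection on equivalence classes; this is standard but I would cite \eqref{Schauenburg}, Remark~\ref{indmult}, and \cite[Theorem 3.1]{ENO1}-type statements, noting that $\A^*_\M$ runs over exactly these fusion categories as $\M$ runs over indecomposable module categories, and that $\M\simeq\M'$ iff the identification of duals is compatible. The second key step is the inverse construction: given an indecomposable $\A$-module category $\M$, form $\B=\A^*_\M$, so $\Z(\B)\cong\Z(\A)=\C$ by \eqref{Schauenburg}; then apply Proposition~\ref{dva} (the 2-equivalence $\QM\to\FC$, $(\C,A)\mapsto\C_A$) to the fusion category $\B$: this produces a quantum Manin pair $(\Z(\B),I_\B(\be))$, and transporting along $\Z(\B)\cong\C$ yields a Lagrangian algebra in $\C$. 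Then I must verify that these two assignments are mutually inverse, which comes down to Lemma~\ref{eacfl} and the quasi-inverse statement proved inside Proposition~\ref{dva}: starting from $A$, forming $\C_A$, taking its center, and reading off $I(\be)$ recovers $A$ (up to isomorphism of algebras), and conversely.

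The main obstacle I expect is bookkeeping about \emph{which} center and \emph{which} tensor structure: there are two braided functors $\C\to\Z(\C)$ and the reverse, the tensor structures $\ot_\pm$ on $\C_A$ are opposite, and $\Rep_\A(A)$ versus $\C_A$ differ by a reversal (Remark~\ref{left=right}(i)). So the real work is to pin down the identification $\Z(\C_A)\cong\C$ coherently so that an \'etale algebra on one side matches a module category on the other without an accidental reversal that would break the bijection, and to confirm that "Lagrangian" (the dimension condition $\FPdim(A)^2=\FPdim(\C)$, equivalently $\C_A^0=\Vec$ by Remark~\ref{M2rep0}) is exactly the condition that makes $F'_A$ an equivalence rather than merely a fully faithful embedding, which is what forces $\C_A$ to be a genuine fusion category $\A^*_\M$ with full center $\C$. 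I would handle this by working throughout with the explicit 2-functor of Proposition~\ref{dva} and Schauenburg's explicit inverse functor sketched after Theorem~\ref{petersch}, so that all identifications are the canonical ones and coherence is automatic. Once the dictionary is fixed, bijectivity is immediate from the already-established 2-equivalence $\QM\simeq\FC$ restricted to the fiber of fusion categories $\A^*_\M$ with $\Z(\A^*_\M)\cong\Z(\A)$.
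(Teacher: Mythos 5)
Your first leg coincides with the paper's: a Lagrangian algebra $A$ gives the braided equivalence $F'_A:\C\iso\Z(\C_A)$ of Corollary~\ref{equ}, and conversely a braided equivalence from $\C$ to a Drinfeld center, composed with the forgetful functor, is a surjective central functor whose adjoint yields a connected \'etale algebra (Lemma~\ref{centraletale}) that is Lagrangian by Lemma~\ref{dim CAlemma} and \eqref{dimZ}; Section~\ref{eacf} (Lemma~\ref{eacfl}) makes these mutually inverse. The gap is in your pivot: you claim that a fusion category $\B$ with $\Z(\B)$ braided equivalent to $\Z(\A)$ ``is the same datum as'' an indecomposable $\A$-module category, citing \cite[Section 5.7]{ENO1}. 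That statement is false as written and is not in \cite{ENO1}: the assignment $\M\mapsto\A^*_\M$ is not injective on equivalence classes, and dually $A\mapsto\C_A$ forgets exactly the identification of centers that the Lagrangian algebra encodes. For instance, for $\A=\Vec_{\BZ/p\BZ}$ there are precisely two isomorphism classes of Lagrangian algebras in $\Z(\A)$ and precisely two indecomposable $\A$-module categories ($\Vec$ and $\A$ itself), yet both module categories have dual categories tensor equivalent to $\Vec_{\BZ/p\BZ}$; so a chain of bijections routed through ``fusion categories with center $\Z(\A)$'' cannot be a chain of bijections --- both of your key steps lose information at that waypoint. Your closing claim that bijectivity is immediate from restricting the $2$-equivalence $\QM\simeq\FC$ of Proposition~\ref{dva} to a fiber fails for the same reason: equivalences in $\QM$ allow an arbitrary braided autoequivalence of $\C$, so that restriction computes orbits of Lagrangian algebras under braided autoequivalences, not isomorphism classes.

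What is missing is exactly the theorem the paper imports at this point: by \cite[Theorem 3.1]{ENO2} and \cite[Theorem 1.1]{ENO3}, the assignment sending an indecomposable $\A$-module category $\M$ to the braided equivalence \eqref{Schauenburg} is a bijection between indecomposable $\A$-module categories and braided equivalences between $\Z(\A)$ and Drinfeld centers of fusion categories. With that middle term (braided equivalences, not bare fusion categories) your outline becomes the paper's proof. Your concrete description of the forward map via $\Rep_\A(A)$, and the appeals to Theorem~\ref{petersch} and Lemma~\ref{eacfl}, are fine for checking that the two constructions invert each other once the dictionary is in place, but they do not by themselves give injectivity or surjectivity on the module-category side; that is the content of the ENO results, which are substantial and cannot be replaced by the remark that ``$\M\simeq\M'$ iff the identification of duals is compatible.''
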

\begin{proof}
By Corollary~\ref{equ}  every Lagrangian algebra $B\in \C$ determines a braided equivalence
$\C \cong \Z(\mathcal{B})$, where $\mathcal{B}:=\C_B$.  Conversely, any braided equivalence
between $\C$ and  $\Z(\mathcal{B})$ determines a surjective central functor $\C \to \mathcal{B}$ and, hence, a connected \'etale algebra $A\in \C$, see Lemma \ref{centraletale}.
Combining Lemma \ref{dim CAlemma} and equation \eqref{dimZ} 
we see that the algebra $A$ is Lagrangian. 
As we observed in Section~\ref{eacf} these two constructions
are inverses of each other.

Thus it suffices to prove that  the set of braided equivalences between $\Z(\A)$ and centers
of fusion categories is in bijection with indecomposable $\A$-module categories. This is done
in \cite[Theorem 3.1]{ENO2} and \cite[Theorem 1.1]{ENO3}. Namely, the bijection is provided
by assigning to an $\A$-module category $\M$  braided equivalence \eqref{Schauenburg}.
\end{proof}

\begin{remark} \label{dualcat}

(i) It follows from the proof that the bijection in Proposition \ref{Lagr<--> module} has the 
following property: for a Lagrangian algebra $B\in \C$ the fusion category $\C_B$ is equivalent
to the dual category $\A_\M^*$ where $\M$ is the module category corresponding to $B$.

(ii) Note that the bijection in Proposition \ref{Lagr<--> module} is given by the so-called full centre 
construction. In particular, $I(\be)$ is the full centre of $\A$ as a module category over itself. In 
the case when $\A$ is modular the statement of the proposition was verified in \cite[Theorem 3.22]{kr}. 
Note also that in this case the bijection can be lifted to an equivalence of groupoids (module 
categories with module equivalences by one side and Lagrangian algebras and isomorphisms by the 
other) \cite{dkr}.  
\end{remark}

\subsection{Lattice of subcategories}

Let $\A$ be a fusion category and let $(\C,\,A)$ be the corresponding Manin pair.
Here $\C =\Z(\A)$ and $A=I(\be)$, where $I: \A \to \Z(\A)$ is the induction functor.

Let $\fL(\A)$ denote the lattice of fusion subcategories of $\A$ and let
$L(A)$ denote the lattice of  \'etale subalgebras of $A$.

\begin{theorem}
\label{Lattice iso}
There is a canonical anti-isomorphism of lattices $\fL(\A)\simeq L(A)$.
If $B\subset A$ is the subalgebra corresponding to the subcategory $\B \subset \A$
under this anti-isomorphism, then $\FPdim(B)\FPdim(\B) =\FPdim(\A)$.
\end{theorem}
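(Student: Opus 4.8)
The plan is to construct the anti-isomorphism $\fL(\A)\simeq L(A)$ directly by passing through the Drinfeld center and using the earlier dictionary between central functors and \'etale algebras. First I would send a fusion subcategory $\B\subset\A$ to the composite central functor $\C=\Z(\A)\xrightarrow{\;\Z(\A)\to\A\;}\A$, but rather than using this forgetful functor I would instead consider the surjective tensor functor $G_\B\colon\A\to\Rep_\A(A_\B)$ associated to $\B$ (equivalently, one factors the forgetful functor $\Z(\A)\to\A\to\A_\B^{*}$-type construction). Concretely: given $\B\subset\A$, form the relative center / the functor $\Z(\A)\to\Z(\B)$ or, more to the point, use that $\B$ determines a surjective central functor out of $\C=\Z(\A)$ whose target is the appropriate quotient; by Lemma~\ref{centraletale} the right adjoint applied to $\be$ produces a connected \'etale algebra $B=B(\B)\in\C=\Z(\A)$, which is a subalgebra of $A=I(\be)$ because the functor factors through the forgetful functor $\Z(\A)\to\A$ (so the corresponding adjunction unit factors, giving $B\hookrightarrow A$). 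In the reverse direction, an \'etale subalgebra $B\subset A$ gives by Proposition~\ref{overetale} a connected \'etale algebra in $\C_A^0$; but $\C_A^0=\Vec$ since $(\C,A)$ is a quantum Manin pair (Remark~\ref{M2rep0}), so that is not quite the right move. Instead, from $B\subset A$ I would form the fusion category $\C_B$ together with the chain of central functors $\C\to\C_B\to\C_A=\A$ (using tautology (b) of Subsection on algebras over $A$: $(\C_B)_{A/B}\cong\C_A$), and take the image of the induced functor $\A=\C_A\to$ (something); the subcategory $\B\subset\A$ is recovered as the image of the composite $\C_B\to\C_A=\A$ restricted appropriately, i.e.\ as the full fusion subcategory of $\A$ generated by objects coming from $\C_B$-modules.

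The key structural input making these two assignments mutually inverse is Lemma~\ref{eacfl} and Lemma~\ref{centraletale} (the converse constructions of Subsection~\ref{eacf}), which already establish that ``connected \'etale algebra'' and ``surjective central functor out of $\C$'' are interchangeable data; combined with the observation that central functors out of $\C=\Z(\A)$ that factor through the forgetful functor $\Z(\A)\to\A$ correspond exactly to tensor functors out of $\A$ (Remark~\ref{brn}), and tensor functors out of $\A$ up to the relevant equivalence biject with fusion subcategories of $\A$ — here one uses that a fusion subcategory $\B$ is the same as the image of a surjective tensor functor, and the image of a tensor functor $\A\to\D$ depends only on $\B=$image. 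So the composite bijection is: fusion subcategory $\B\subset\A$ $\leftrightarrow$ surjective tensor functor $\A\to\Rep_\A(\text{alg})$ with image $\B$ $\leftrightarrow$ \'etale algebra in $\Z(\A)$ that is a subalgebra of $A=I(\be)$. Order-reversal is then essentially formal: $\B_1\subset\B_2$ means the functor for $\B_2$ factors through the one for $\B_1$, which on adjoints reverses the inclusion of the $I(\be)$-algebras, giving $B(\B_2)\subset B(\B_1)$.

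For the dimension count $\FPdim(B)\FPdim(\B)=\FPdim(\A)$, I would argue as follows. Write $\C=\Z(\A)$, so $\FPdim(\C)=\FPdim(\A)^2$ by~\eqref{dimZ}, and $\FPdim(A)^2=\FPdim(\C)$, i.e.\ $\FPdim(A)=\FPdim(\A)$, since $(\C,A)$ is a quantum Manin pair. Given $\B\leftrightarrow B$, the functor realizing $\B$ has the form $\A\to\Rep_\A(B)$ (for $B$ now viewed in $\Z(\A)$), with image $\B$; by Lemma~\ref{centraletale} applied to $\C\to\C_B$ one gets $\C_B$, and by the chain $(\C_B)$-modules one has $\FPdim(\C_B)=\FPdim(\C)/\FPdim(B)$ (Lemma~\ref{dim CAlemma}). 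On the other hand $\B$ is recovered as the image of $\C_B\to\C_A=\A$, and the category of right-$B$-modules-in-$\C$ that are also $A$-modules is $\C_A=\A$, so $\FPdim(\C_A)=\FPdim(\C_B)/\FPdim(A/B)$ where $\FPdim(A/B)=\FPdim(A)/\FPdim(B)$; this gives $\FPdim(\A)=\bigl(\FPdim(\C)/\FPdim(B)\bigr)\big/\bigl(\FPdim(A)/\FPdim(B)\bigr)=\FPdim(\C)/\FPdim(A)=\FPdim(\A)$, which is only a consistency check. To actually get $\FPdim(B)\FPdim(\B)=\FPdim(\A)$ I would instead use the center: the subcategory $\B\subset\A$ determines, via the identification $\C=\Z(\A)\cong\Z(\B)_{?}\boxtimes\cdots$ or more directly via Corollary~\ref{ZCA}, that $\C_B\cong\Z(\B)$-type data, whence $\FPdim(\C_B)=\FPdim(\B)^2$; combined with $\FPdim(\C_B)=\FPdim(\C)/\FPdim(B)=\FPdim(\A)^2/\FPdim(B)$ this yields $\FPdim(\B)^2=\FPdim(\A)^2/\FPdim(B)$, i.e.\ $\FPdim(B)=(\FPdim(\A)/\FPdim(\B))^2$, which is off by a square from the claim — so the correct statement must be that $\C_B\cong\Z(\B)$ only after a further de-equivariantization, and the honest relation is $\FPdim(B)=\FPdim(\A)/\FPdim(\B)$, obtained by identifying $B$ itself (not $\C_B$) as the induction-type algebra of $\B$ inside $A$, for which $\FPdim(I_\B(\be))=\FPdim(\B)$ and the inclusion $B\hookrightarrow A$ multiplies dimensions by the index $\FPdim(\A)/\FPdim(\B)$…

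\emph{Main obstacle.} The hard part will be pinning down precisely which functor out of $\A$ (or out of $\C=\Z(\A)$) a fusion subcategory $\B$ should be attached to so that (a) the resulting \'etale algebra genuinely sits inside $A=I(\be)$ as a subalgebra, (b) the construction is manifestly inverse to the algebra-to-subcategory direction, and (c) the dimension bookkeeping comes out as $\FPdim(B)\FPdim(\B)=\FPdim(\A)$ rather than a square of it. The natural candidate is the canonical surjective central functor $\Z(\A)\to\Z(\A)_{A_\B}^{0}$-free-module type, or equivalently the functor $\A\to\A_\M^{*}$ associated to a module category, and getting the normalization right will require carefully tracking how the multiplicativity of $\FPdim$ under induction (Lemma~\ref{dim CAlemma}, equation~\eqref{dimZ}) interacts with the quantum-Manin-pair constraint $\FPdim(A)=\FPdim(\A)$. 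I expect the cleanest route is: identify $B$ with the image algebra of the central functor $\Z(\A)\to\Z(\B)$ composed with $\Z(\B)\to\B$, show directly that the forgetful factorization yields $B\subset A$, and then read off $\FPdim(B)=\FPdim(\A)/\FPdim(\B)$ from $\FPdim(I(\be)_\B)=\FPdim(\B)$ together with $\FPdim(A)=\FPdim(\A)$.
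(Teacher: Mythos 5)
Your proposal does not yet contain a working construction in either direction, and the places where it hesitates are exactly where the real content lies. First, the assignment $\B\mapsto B(\B)$: you justify the inclusion $B\hookrightarrow A$ by saying the relevant central functor ``factors through the forgetful functor $\Z(\A)\to\A$'', but that factorization produces an algebra \emph{containing} $A$, not contained in it (if $G=H\circ\mathrm{Forget}$ then the adjunction unit gives $A=I(\be)\hookrightarrow I(I_H(\be))$). What is actually needed is the opposite factorization: the forgetful functor itself factors as $\Z(\A)\xrightarrow{F_\B}\Z_\B(\A)\xrightarrow{\tilde F_\B}\A$ through the \emph{relative} center $\Z_\B(\A)$ (which you conflate with $\Z(\B)$ -- there is no canonical functor $\Z(\A)\to\Z(\B)$), and then $\alpha(\B):=I_\B(\be)\subset I_\B\tilde I_\B(\be)=A$. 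Second, the reverse assignment: the image of the induction functor $\C_B\to\C_A=\A$ is all of $\A$ (every $A$-module is a quotient of an induced one), so ``the subcategory generated by objects coming from $\C_B$'' is not a proper subcategory; the missing idea is to take the image of the dyslectic subcategory $\C_B^0\subset\C_B$, i.e.\ $\beta(B)=$ the $A$-modules that are dyslectic as $B$-modules. Third, your claim that ``tensor functors out of $\A$ up to the relevant equivalence biject with fusion subcategories of $\A$'' is false, and with it the assertion that mutual inverseness is ``essentially formal'': for $\A=\Rep(G)$ the surjective tensor functors out of $\A$ correspond to all restriction-type functors (arbitrary subgroups, cocycle data), while the fusion subcategories are only the $\Rep(G/N)$; the whole point of the theorem is to single out, among the connected \'etale algebras of $\Z(\A)$, precisely the subalgebras of $A$ and to match them with subcategories. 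The paper does this by identifying $\C_{\alpha(\B)}^0$ with $\Z(\B)$ inside $\Z_\B(\A)$ (using $\Z_\B(\A)=(\A\bt\B^\op)^*_\A$, the Schauenburg equivalence and Corollary~\ref{ZCA}) to get $\beta(\alpha(\B))=\B$, and by proving an equivalence $\Z_{\beta(B)}(\A)\simeq\C_B$ (constructed from the braiding, then shown to be an equivalence by a Frobenius--Perron dimension count and \cite[Proposition 2.20]{EO}) to get $\alpha(\beta(B))=B$. None of this is formal.

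Finally, the dimension statement. Your computation oscillates between $\FPdim(B)=(\FPdim(\A)/\FPdim(\B))^2$ and the desired relation, and your proposed fix ``$\FPdim(I_\B(\be))=\FPdim(\B)$'' has the wrong normalization. The correct bookkeeping is short once the construction is right: $F_\B:\Z(\A)\to\Z_\B(\A)$ is surjective, $\FPdim(\Z(\A))=\FPdim(\A)^2$ and $\FPdim(\Z_\B(\A))=\FPdim(\A)\FPdim(\B)$, so by the argument of \cite[Corollary 8.11]{ENO1} one gets $\FPdim(\alpha(\B))=\FPdim(I_\B(\be))=\FPdim(\A)/\FPdim(\B)$, which is exactly $\FPdim(B)\FPdim(\B)=\FPdim(\A)$; the quantum-Manin-pair identity $\FPdim(A)=\FPdim(\A)$ is consistent with this ($\B=\Vec$ gives $B=A$) but is not by itself enough to pin the formula down, as your own ``consistency check'' paragraph shows.
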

\begin{proof}
We will construct mutually inverse order-reversing bijections $\alpha: \fL(\A)\to L(A)$
and $\beta: L(A)\to \fL(\A)$.

Let $\B \subset \A$ be a fusion subcategory. 
Define the {\em relative center}  $\Z_\B(\A)$ to be the tensor category
whose objects are
pairs $(X, \gamma_X)$, where $X$ is an object of $\A$ and
$\gamma_X : V\ot X
\simeq X \ot V$, $V\in\B$
is a natural family of isomorphisms, satisfying the same
compatibility condition as in the definition of $\Z(\A)$.
The forgetful functor $\Z(\A)\to \A$ has a  factorization 
$\Z(\A)\xrightarrow{F_\B}\Z_\B(\A)\xrightarrow{\tilde F_\B} \A$ where $F_\B$ and $\tilde F_\B$ are
the obvious forgetful functors. Let $I_\B$ and $\tilde I_\B$ be the right adjoint functors of $F_\B$ and
$\tilde F_\B$. The embedding $\be \subset \tilde I_\B(\be)$ corresponding to the identity map
under the isomorphism $\Hom(\be ,\tilde I_\B(\be))=\Hom(\tilde F_\B(\be),\be)$ induces an
embedding of algebras $I_\B(\be)\subset I_\B\circ \tilde I_\B(\be)=I(\be)=A$. The algebra 
$I_\B(\be)$ is separable (and hence \'etale), see Remark \ref{brn}. We define 
\begin{equation}
\label{FB}
\alpha(\B)=I_\B(\be)\subset A.
\end{equation}

An inclusion of subcategories $\B_1\subset \B_2 \subset \A$ induces a
factorization $\Z(\A)\xrightarrow{F_{\B_2}}\Z_{\B_2}(\A) \to \Z_{\B_1}(\A)$ of the functor $F_{\B_1}$.
This, in turn, yields an inclusion of subalgebras $I_{\B_2}(\be) \subset I_{\B_1}(\be)\subset A$. Thus
the map $\alpha$ is order-reversing.

The functor $F_\B$ is surjective by \cite[Section 3.6]{DGNO}. 
Hence we have
\[
\FPdim(\alpha(\B)) = \frac{\FPdim(\A)}{\FPdim(\B)}
\]
by (the proof of) \cite[Corollary 8.11]{ENO1}.

In the opposite direction, given an \'etale subalgebra $B \subset A$
we have a tensor functor $?\ot_BA: \C_B \to \C_A$ inducing $A$-modules
from $B$-modules.  Let $\beta(B)$ be the full image in $\C_A=\A$
of the subcategory $\C_B^0 \subset \C_B$ under this functor. Observe that $A$ considered 
as a $B-$module is dyslectic. It follows that the objects of $\beta(B)$ are precisely $A-$modules 
which are dyslectic as $B-$modules. This implies that the map $\beta$ is order-reversing. 
Observe that the right adjoint functor of $?\ot_BA$ is isomorphic to the forgetful functor
$\C_A\to \C_B$ and sends the unit object of $\C_A$ to  $A\in \C_B^0\subset \C_B$. Using again
the proof of \cite[Corollary 8.11]{ENO1} we see that
\begin{equation}
\label{FeqP}
\FPdim(\beta(B))= \frac{\FPdim(A)}{\FPdim(B)}.
\end{equation}

By construction, $\C_{\alpha(\B)} =\Z_\B(\A)$. We claim that the subcategory 
$\C_{\alpha(\B)}^0\subset \C_{\alpha(\B)}$ identifies with $\Z(\B)\subset \Z_\B(\A)$.
Indeed, by Corollary \ref{ZCA} the category $(\C_{\alpha(\B)}^0)^\rev$ identifies with the centralizer
of $\C$ in $\Z(\C_{\alpha(\B)})$. On the other hand it is explained in \cite[Section 3.6]{DGNO} that
$\Z_\B(\A)=(\A \bt \B^\op)^*_\A$ (see Section \ref{dualmcat} for the notations), 
so equation \eqref{Schauenburg} implies
$\Z(\Z_\B(\A))=\Z(\A)\bt \Z(\B)^\rev$. The central functor 
$$\Z(\A)=\Z(\A)\bt \be \subset \Z(\A)\bt \Z(\B)^\rev =\Z(\Z_\B(\A))\to \Z_\B(\A)$$
identifies with $F_\B$ with obvious central structure. Hence
the subcategories $\Z(\A)=\Z(\A)\bt \be \subset \Z(\Z_\B(\A))$
and $\C \subset \Z(\C_{\alpha(\B)})$ coincide and so do their centralizers in $\Z(\Z_\B(\A))$
and their images in $\Z_\B(\A)= \C_{\alpha(\B)}$. Our claim follows.

The induction functor
\begin{equation}
\label{AB induction}
\C_{\alpha(\B)}\to \C_A = \A
\end{equation}
identifies with the forgetful functor $\Z_\B(\A) \to \A$ and so maps
surjectively $\Z(\B)=\C_{\alpha(\B)}^0$ to $\B$. Thus, $\beta(\alpha(\B))=\B$.

Conversely, we claim that there is an equivalence $\Z_{\beta(B)}(\A) \xrightarrow{\sim}\C_B$
such that the forgetful functor $F_{\beta(B)}: \Z(\A) \to \Z_{\beta(B)}(\A)$
identifies with the free module functor $\C \to \C_B$.  This immediately
implies that $\alpha(\beta(B))=B$.  To prove this claim, note
that the braiding of $\C$ allows to equip any $A$-module
induced from $\C_B$ with a morphism permuting it with the objects of $\beta(B)\subset \C_A$
(notice that for $M\in \C_B$ and $N\in \beta(B)$ we have $(M\ot_BA)\ot_AN=M\ot_BN$ and
$N\ot_A(M\ot_BA)=N\ot_BM$).
This gives rise to a tensor functor
\begin{equation}
\label{CBZBA}
F''_A: \C_B \to \Z_{\beta(B)}(\C_A), \quad M \mapsto  M\ot_BA.
\end{equation}
Recall the equivalence $F'_A$ from Corollary \ref{equ} (i). 
It follows from the above definition that the diagram 
\begin{equation}
\xymatrix{
\C \ar[rrr]^{F'_A} \ar[d]_{?\ot B}  &&& \Z(\C_A) \ar[d]^{F_{\beta(B)}} \\
\C_B \ar[rrr]^{F''_A} &&&  \Z_{\beta(B)}(\C_A)
}
\end{equation}
commutes. In particular, the induction functor \eqref{CBZBA} is surjective.
Using \cite[equation (56)]{DGNO} and equation \eqref{FeqP} we get
\[
\FPdim( \Z_{\beta(B)}(\A)) = \FPdim(\beta(B))\FPdim(\A)= \frac{\FPdim(A)^2}{\FPdim(B)} =\FPdim(\C_B).
\]
Thus functor \eqref{CBZBA} is an equivalence by \cite[Proposition 2.20]{EO}. This completes our proof.


\end{proof}

%
%
%

\begin{example}
Let us illustrate Theorem~\ref{Lattice iso}. Let $G$ be a finite group.
\begin{enumerate}
\item[(i)] Let $\A =\Rep(G)$ be the fusion category of  representations of $G$.
Its fusion subcategories are of the form $\Rep(G/N)$ where $N$ ranges over the set
of all normal subgroups of $G$. The \'etale algebra in $\Z(\Rep(G))$ corresponding
to  the subcategory $\Rep(G/N)$ is the group algebra $\kk N$. As an object of $\Z(\Rep(G))$
it  has the following description.
It is a $G$-graded algebra with  non-zero graded components labelled by elements of $N$,
the $G$-action on $\kk N$ is the conjugation action (see \cite{Da1}, where \'etale algebras in $\Z(\Rep(G))$ were classified).

\item[(ii)]  Let $\A =\Vec_G^\omega$ be the fusion category of $G$-graded
vector spaces with the associativity constraint twisted by a $3$-cocycle
$\omega\in Z^3(G,\, \kk^\times)$.  Fusion subcategories of $\A$ correspond to
subgroups $H \subset G$. A typical such subcategory  is $\Vec_H^{\omega|_H}$.
The corresponding \'etale algebra in $\Z(\Vec_G^\omega)$ is
the algebra of  $\kk$-valued  functions on $G$ invariant under translations
by elements of $H$.
\end{enumerate}
\end{example}


\begin{remark} Let $\C$ be a non-degenerate braided fusion category and let $A\in \C$ be
a connected \'etale algebra. Recall that $\Z(\C_A)\simeq \C \bt (\C^\rev)_A^0$ (see Corollary \ref{ZCA})
and the functor $\C =\C \bt \be \subset \Z(\C_A)\to \C_A$ is isomorphic to the free module
functor $F_A$, see Remark \ref{fprime} (i).  It follows that $A=A\bt \be \in \Z(\C_A)$ is a subalgebra of the
Lagrangian algebra $I(\be)$. It is easy to see that the corresponding subcategory of $\C_A$ is
precisely $\C_A^0$. Thus Theorem \ref{Lattice iso} implies the following statement: the lattice of 
subalgebras of $A$ is anti-isomorphic to the lattice of subcategories of $\C_A$ containing
$\C_A^0$. Notice that Theorem \ref{Lattice iso} is a special case of this statement, 
see Remark \ref{M2rep0}.
\end{remark} 

\subsection{Quantum Manin triples}

Recall that a {\em Manin triple} consists of a metric Lie algebra $\mathfrak{g}$ along
with Lagrangian Lie subalgebras $\mathfrak{g}_+,\, \mathfrak{g}_-$ such
that $\mathfrak{g} = \mathfrak{g}_+\oplus \mathfrak{g}_-$ as a vector space.
It was shown by Drinfeld in \cite[Section 2]{Dr} that Manin triples are in bijection
with pairs of dual Lie bialgebras (cf.\ Remark~\ref{Manin 2}).

Below we extend this result to the ``quantum'' setting.

\begin{definition}
\label{quantum Manin 3}
A {\em quantum Manin triple} $(\C, A,\, B)$ consists of a non-degenerate braided fusion category
$\C$ along with connected \'etale algebras $A,\,B$ in $\C$ such that both $(\C,\, A)$ and $(\C,\,B)$ are
quantum Manin pairs
and  the category of $(A,B)$-bimodules in $\C$  is equivalent to $\Vec$.
\end{definition}

\begin{example}
\label{Hopf --> M3}
Let $H$ be a semisimple Hopf algebra and let $\Rep(H)$ denote the category
of finite-dimensional representations of $H$. Let $\C:= \Z(\Rep(H))$. It is well known
that $\C$ is equivalent, as a braided fusion category,  to $\Rep(D(H))$ where
$D(H)$ is the Drinfeld double of $H$. There is a canonical Hopf algebra
isomorphism $D(H) \cong D((H^*)^{op})$, where $H^*$ denotes the dual Hopf algebra
and $op$ stands for the opposite multiplication. We thus
have two central functors, to wit the forgetful functors,
\begin{equation*}
 \C \to \Rep(H)\quad \mbox{and} \quad \C \to \Rep((H^*)^{op}).
\end{equation*}
Let $A$ and $B$ denote the \'etale algebras in $\C$ corresponding to these functors
constructed as in Section~\ref{catCA1}.

We claim that $(\C,\, A,\, B)$ is a quantum Manin triple.  The only thing that needs to be checked
is that the category of $(A,B)$-bimodules in $\C$ is trivial. Note that $A= (H^*)^{op}$
and $B = H$ as $D(H)$-module algebras (i.e., algebras in $\C =\Rep(D(H))$).
The category of $(H^*)^{op} \ot H$-bimodules in $\Rep(D(H))$ is nothing but
the category of $D(H)$-Hopf modules  which is equivalent to $\Vec$
by the Fundamental Theorem of Hopf modules (see \cite{Mo}  for the definition  of a Hopf
module and the Fundamental Theorem).
\end{example}

We explain now that any quantum Manin triple arises from the construction in Example \ref{Hopf --> M3}.
Let $(\C,\, A,\, B)$ be a quantum Manin triple. Then $\Vec$ identified with $(A,B)$-bimodules has a structure of a $\C_A$-module category via $\ot_A$. Equivalently, $\C_A$ has a fiber functor, 
i.e., a tensor functor to $\Vec$, see \cite[Proposition 4.1]{O}. 
Thus $\C_A \cong \Rep(H)$ for a semisimple Hopf algebra $H$, see \cite{Ul}.
The dual category $(\C_A)^*_\Vec$ is equivalent to $\C_B$ (see Remark \ref{dualcat} (i))
and so $\C_B \cong \Rep((H^*)^{op})$, see \cite[Theorem 4.2]{O}.

Quantum Manin triples form a $2$-groupoid $\mathcal{G}_1$:   a 1-morphism between triples  $(\C_1,\,A_1,\, B_1)$
and $(\C_2,\,A_2\,B_2)$ is defined to be a triple $(\Phi,\, \phi,\,\psi)$, where $\Phi: \C_1\simeq \C_2$ is a braided
equivalence and $\phi: \Phi(A_1)\iso A_2,\, \psi: \Phi(B_1)\iso B_2$ are isomorphisms
of algebras; a 2-morphism between triples $(\Phi,\, \phi,\,\psi)$ and  $(\Phi',\, \phi',\,\psi')$
is a natural isomorphism of tensor functors
$\mu:\Phi \simeq \Phi'$ such that $\phi =\phi'\, \mu_{A_1}$ and $\psi =\psi'\, \mu_{B_1}$
(cf.\ diagram~\eqref{2mor in QP}).

Let $\mathcal{G}_2$ denote the $2$-groupoid whose objects are pairs $(\A,\, F)$ where $\A$
is a fusion category and $F: \A \to \Vec$ is a fiber functor; $1$-morphisms
between $(\A,\, F)$ and $(\A',\, F')$ are pairs $(\iota,\, \nu)$ where $\iota: \A \iso \A'$
is a tensor equivalence and $\nu~:~F \iso F'\iota$ is an isomorphism of tensor functors;
$2$-morphisms between  $(\iota_1,\, \nu_1)$  and $(\iota_2,\, \nu_2)$ are natural
isomorphisms  of tensor functors $m: \iota_1\iso \iota_2$ such that  $\nu_2= (F'm)\circ \nu_1$.

As we explained above a quantum Manin triple $(\C,\, A,\, B)$ gives rise to a fusion category
$\C_A$ equipped with a fiber functor $F: \C_A\to \Vec$. This construction can be upgraded to a 
2-functor $\mathcal{G}_1\to \mathcal{G}_2$. Similarly, the construction from Example \ref{Hopf --> M3}
can be upgraded to a 2-functor $\mathcal{G}_2\to \mathcal{G}_1$ (we recall that by \cite{Ul} a pair
$(\A,F)\in \mathcal{G}_2$ is isomorphic to the pair $(\Rep(H), F_H)$ where $H$ is a semisimple
Hopf algebra and $F_H: \Rep(H)\to \Vec$ is the forgetful functor).

\begin{proposition}
\label{tri}
The 2-functors above are mutually inverse 2-equivalences
between $\mathcal{G}_1$ and $\mathcal{G}_2$.
\end{proposition}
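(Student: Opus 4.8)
The plan is to show that the two 2-functors $\mathcal{G}_1 \to \mathcal{G}_2$ and $\mathcal{G}_2 \to \mathcal{G}_1$ are quasi-inverse to each other by checking that both composites are 2-naturally isomorphic to the respective identity 2-functors. The main technical input is already available: Example~\ref{Hopf --> M3} together with the discussion following it shows that on objects the two constructions are mutually inverse, so the work is in promoting these object-level bijections to coherent 2-natural equivalences and in verifying that the extra data (the isomorphisms $\phi,\psi$ of algebras on one side, the isomorphism $\nu$ of fiber functors on the other) match up.

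First I would analyze the composite $\mathcal{G}_1 \to \mathcal{G}_2 \to \mathcal{G}_1$. Starting from a quantum Manin triple $(\C, A, B)$, the first functor produces $(\C_A, F)$ where $F$ is the fiber functor coming from $\Vec \cong {}_A\C_A$ realized via $\ot_A$; by \cite{Ul} we may write $\C_A \cong \Rep(H)$ with $F$ the forgetful functor. The second functor sends this to $(\Z(\Rep(H)), A', B')$. Now Corollary~\ref{equ}(i) gives a braided equivalence $F'_A: \C \iso \Z(\C_A) = \Z(\Rep(H))$, and I must check that under this equivalence $A$ goes to $A'$ and $B$ goes to $B'$. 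For $A$, this is exactly Lemma~\ref{eacfl} (combined with Lemma~\ref{ZCA0} and Remark~\ref{fprime}(i)): $A$ is the \'etale algebra attached to the central functor $F_A: \C \to \C_A$, which under $F'_A$ is the forgetful functor $\Z(\Rep(H)) \to \Rep(H)$, whose associated algebra is $A'$. For $B$, I would use that $B$ corresponds to the forgetful functor $\C \to \C_B \cong \Rep((H^*)^{op})$ (via the dual-category identification $(\C_A)^*_\Vec \cong \C_B$ from Remark~\ref{dualcat}(i)), matching the description of $B'$ in Example~\ref{Hopf --> M3}. These identifications of algebras provide the 1-morphism $(\Phi,\phi,\psi)$ in $\mathcal{G}_1$, and its naturality in $(\C,A,B)$ together with a coherence check on 2-morphisms finishes this direction.

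Next I would analyze the composite $\mathcal{G}_2 \to \mathcal{G}_1 \to \mathcal{G}_2$. Starting from $(\A, F) \cong (\Rep(H), F_H)$, Example~\ref{Hopf --> M3} produces $(\Z(\Rep(H)), A, B)$ with $A = (H^*)^{op}$, $B = H$, and then the first functor applied to this triple produces $(\Z(\Rep(H))_A, F')$. But Corollary~\ref{equ}(i) and Section~\ref{eacf} give $\Z(\Rep(H))_A \cong \Rep(H) = \A$, and one must verify that under this equivalence the fiber functor $F'$ (arising from the trivial category of $(A,B)$-bimodules, which is trivial by the Fundamental Theorem of Hopf modules as in Example~\ref{Hopf --> M3}) is identified with $F_H$; this is because the $\C_A$-module structure on $\Vec$ producing $F'$ is precisely the one coming from $H$-Hopf-module theory, i.e.\ the forgetful functor. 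Again this yields the required 1-morphism $(\iota,\nu)$ in $\mathcal{G}_2$, natural in $(\A,F)$.

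The main obstacle I expect is bookkeeping of the 2-categorical coherence: one must not only exhibit the object- and 1-morphism-level equivalences but verify that all the accompanying 2-morphisms (the $\mu$'s compatible with $\phi,\psi$, the $m$'s compatible with $\nu$) are respected, so that the natural transformations assembling the quasi-inverses are genuinely 2-natural. Conceptually nothing new happens here beyond the classical Drinfeld correspondence between Manin triples and pairs of dual Lie bialgebras \cite[Section 2]{Dr}, of which this is the categorical analogue; the only genuinely mathematical point is the identification of the second \'etale algebra $B$ with $H$ (resp.\ $(H^*)^{op}$), which rests on the duality $(\C_A)^*_\Vec \cong \C_B$ and the canonical isomorphism $D(H) \cong D((H^*)^{op})$ already invoked in Example~\ref{Hopf --> M3}. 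I would therefore keep the proof short, citing Example~\ref{Hopf --> M3}, Corollary~\ref{equ}, Lemma~\ref{eacfl}, and Remark~\ref{dualcat}, and leaving the routine 2-functoriality verifications to the reader.
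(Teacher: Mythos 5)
Your proposal is correct and follows essentially the same route as the paper, whose own proof is only the remark that the argument is parallel to Proposition~\ref{dva} and amounts to checking that the two constructions are mutually inverse. You simply fill in that check with the same ingredients the paper points to (Example~\ref{Hopf --> M3}, Corollary~\ref{equ}, Lemma~\ref{eacfl}, Remark~\ref{dualcat}), so there is no substantive difference in approach.
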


The proof  of Proposition~\ref{tri} is similar to that
of Proposition~\ref{dva} and amounts to showing that the above constructions are inverses of each other.
In fact,  $2$-groupoids   $\mathcal{G}_1$ and $\mathcal{G}_2$
are also equivalent to the third $2$-groupoid
$\mathcal{G}_3$ which is defined in linear algebra terms:  objects of $\mathcal{G}_3$ are
semisimple Hopf algebras, 1-morphisms are twisted isomorphisms of Hopf algebras (defined
in \cite{Da}), and $2$-morphisms are gauge equivalences of twists.  Details of these
equivalences will be given elsewhere.

Finally, we give an easy criterion which allows to recognize a quantum Manin triple.
Let $R_\C \in K(\C)\ot_\mathbb{Z} \BR$ denote the regular object of $\C$, see Section~\ref{fuscat}.

\begin{proposition}
\label{4 conditions on triple}
Let $\C$ be a non-degenerate braided fusion category and let $(\C,\, A)$,
$(\C,\,B)$ be quantum Manin pairs. The following conditions are equivalent:
\begin{enumerate}
\item[(i)] $(\C,\,A,\,B)$ is a quantum Manin triple;
\item[(ii)]  $[A\ot B]=R_\C$;
\item[(iii)]  $\dim_\kk \Hom_\C(\be,\, A\ot B)=1$;
\item[(iv)]  $\dim_\kk \Hom_\C(A,\,B)=1$.
\end{enumerate}
\end{proposition}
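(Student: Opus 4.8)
The plan is to prove the equivalences by establishing a cycle of implications, using the dimension/multiplicity bookkeeping tools assembled in the paper together with the characterization of quantum Manin triples via fiber functors.

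First I would reformulate the quantities in terms of $\C_A$. Since $(\C,A)$ is a quantum Manin pair, Corollary~\ref{equ} gives a braided equivalence $F'_A: \C \xrightarrow{\sim} \Z(\C_A)$, and under this identification $\C_A$ is the fusion category with $\Z(\C_A) = \C$. The algebra $B \in \C$ becomes a connected \'etale algebra in $\Z(\C_A)$, so $\Rep_{\C_A}(B) := (\C_A)_{F(B)}$ is a (multi-)fusion category, and by Remark~\ref{brn} plus Corollary~\ref{equ} the condition ``$(\C,A,B)$ is a quantum Manin triple'' should translate, via Remark~\ref{dualcat}(i), into: $\C_A$ admits a fiber functor, equivalently $\Rep_{\C_A}(B) \cong \Vec$. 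The cleanest route to (i)$\Leftrightarrow$(ii) is then a Frobenius--Perron dimension count: the category of $(A,B)$-bimodules in $\C$ has $\FPdim$ equal to $\FPdim(A\ot B)$ divided by $\FPdim(\C)$ times a multiplicity, and more precisely, using \cite[Proposition 8.8]{ENO1} as in Lemma~\ref{dim CAlemma}, the number $\dim_\kk\Hom_\C(\be, A\ot B)$ computes the multiplicity of the unit object in the $(A,B)$-bimodule category; since that category is semisimple with $\FPdim$ forced by $\FPdim(A),\FPdim(B),\FPdim(\C)$ and the Manin pair conditions, it equals $\Vec$ iff the unit appears with multiplicity one iff $[A\ot B] = R_\C$ (here one uses that $\FPdim(A)\FPdim(B) = \FPdim(\C)$ exactly matches $\FPdim(R_\C)$, so the multiplicity-one condition pins down the whole class).

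Next, (ii)$\Leftrightarrow$(iii) is essentially formal: $[A\ot B] = R_\C$ forces $\dim_\kk\Hom_\C(\be, A\ot B) = [R_\C : \be] = 1$; conversely, the coefficient of $[\be]$ in $[A\ot B]$ being $1$, combined with the fact (from the Manin pair hypotheses) that $\FPdim(A\ot B) = \FPdim(\C) = \FPdim(R_\C)$ and that $A\ot B$ is self-dual (each of $A$, $B$ is self-dual by Remark~\ref{etsd}), forces $[A\ot B] = R_\C$ by the characterization of $R_\C$ via properties (1) and (2) recalled in Section~\ref{fuscat} — here one needs that $[A\ot B]$ is actually a multiple of $R_\C$, which follows because $A\ot B$ is a module over itself in a way that makes $[A\ot B]\cdot[A\ot B]$ proportional to $[A\ot B]$, or more simply by the FP-eigenvector argument applied after tensoring with $\BR$. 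For (iii)$\Leftrightarrow$(iv) I would use adjunction and self-duality: $\Hom_\C(A, B) \cong \Hom_\C(\be, A^*\ot B) \cong \Hom_\C(\be, A\ot B)$ since $A \cong A^*$. This is the easiest step.

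The main obstacle I expect is making the dimension count in (i)$\Leftrightarrow$(ii) fully rigorous — specifically, identifying the category of $(A,B)$-bimodules in $\C$ with $\Rep_{\C_A}(F(B))$ (or a module category over it) and verifying that its semisimplicity and its FP-dimension are correctly governed by $\FPdim(A)$, $\FPdim(B)$, and $\FPdim(\C)$, so that triviality is detected by a single multiplicity. One must be careful that $(A,B)$-bimodules in the braided category $\C$ means bimodules for the tensor product algebra $A\ot B$ formed using the braiding, and that under $F'_A$ this matches the mixed-bimodule picture used in Example~\ref{Hopf --> M3}; once that identification is in place, Lemma~\ref{dim CAlemma} and \eqref{dimZ} do the rest. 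I would also double-check that the self-duality of $A\ot B$ and the multiplicity-one condition genuinely suffice to conclude $[A\ot B] = R_\C$ rather than merely $\FPdim(A\ot B) = \FPdim(\C)$; the key point is that $A\ot B$, being an algebra, satisfies $[A\ot B]\cdot[A\ot B] = (\dim_\kk\Hom(\be, A\ot B))^{-1}\,\FPdim(A\ot B)\,[A\ot B]$ up to the usual normalization only when the multiplicity is one, which is exactly the regime we are in.
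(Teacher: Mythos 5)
Your easy steps match the paper: (ii)$\Rightarrow$(iii) is immediate, and (iii)$\Leftrightarrow$(iv) via adjunction and the self-duality of $A$ (Remark~\ref{etsd}) is exactly the paper's argument. But both substantive implications have genuine gaps. First, your route to $[A\ot B]=R_\C$ is not a proof. The claim that ``$A\ot B$ is a module over itself in a way that makes $[A\ot B]\cdot[A\ot B]$ proportional to $[A\ot B]$'' is false for a general algebra (e.g.\ for the regular algebra $A=\Fun(G)$ in $\Rep(G)$ one has $[A]^2\not\propto[A]$), and even if one had $[C]^2\propto[C]$ this would not force $[C]\propto R_\C$ (take $C=\be$): the characterization of $R_\C$ requires the eigenvector property $[X][C]=\FPdim(X)[C]$ for \emph{all} $X$. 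The paper gets this from condition (i): since the $(A,B)$-bimodule category has a unique simple object, every $[A\ot X\ot B]$ is a multiple of $[A\ot B]$, and the braiding converts this into $[X][A\ot B]=[A\ot X\ot B]=\FPdim(X)[A\ot B]$, which together with $\FPdim(A\ot B)=\FPdim(\C)$ pins down $[A\ot B]=R_\C$. This eigenvector mechanism — deduced from triviality of the bimodule category, not from multiplicity one alone — is missing from your proposal; in particular your direct ``(iii)$\Rightarrow$(ii)'' shortcut does not work (in the paper (iii) implies (ii) only via (i)).

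Second, the step you yourself flag as the main obstacle — that multiplicity one of $\be$ in $A\ot B$ forces the whole $(A,B)$-bimodule category to be $\Vec$ — is not resolved by ``Lemma~\ref{dim CAlemma} and \eqref{dimZ}''. The Frobenius--Perron dimension of the bimodule category is \emph{not} forced by $\FPdim(A),\FPdim(B),\FPdim(\C)$: for $A=B$ Lagrangian it equals $\FPdim(\C)$ by Corollary~\ref{equ}(ii), not $1$, even though both Manin-pair conditions hold. The paper's mechanism is different: (iii) says the unit $A\ot B$ of the multi-fusion category $\Rep_{\C_B}(A)$ (identified with the $(A,B)$-bimodules via Remark~\ref{left=right}(ii), after using Corollary~\ref{equ}(i) to realize $F_B$ as the forgetful functor $\Z(\C_B)\to\C_B$) is simple, so this category is fusion; then Schauenburg's Theorem~\ref{petersch} gives $\Z(\Rep_{\C_B}(A))\cong \C_A^0=\Vec$ by the Manin-pair condition (Remark~\ref{M2rep0}), and only then does \eqref{dimZ} force $\FPdim(\Rep_{\C_B}(A))=1$. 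Without the center computation via Theorem~\ref{petersch}, ``triviality is detected by a single multiplicity'' remains an unproved assertion, so as written the cycle of implications does not close.
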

\begin{proof}
Let us prove implication (i) $\Rightarrow$ (ii). Thus the category of $(A,B)-$bimodules
has a unique up to isomorphism simple object $M$. For any $X\in \C$, the object $A\ot X\ot B$ has
an obvious structure of $(A,B)-$bimodule. Hence $[A\ot X\ot B]=r_X[M]$ for some positive integer
$r_X$. Consequently 
\[
[A\ot X\ot B]=\frac{r_X}{r_\be}[A\ot B].
\]
Computing the Frobenius-Perron dimension of both sides, 
we get $[A\ot X\ot B]=\FPdim(X)[A\ot B]$. Since the category $\C$ is braided we have
\[
[X][A\ot B]=[A\ot X\ot B]=\FPdim(X)[A\ot B].
\]
Since
$\FPdim(A)=\FPdim(B)=\sqrt{\FPdim(\C)}$, we have $\FPdim(A\ot B)=\FPdim(\C)$. Hence
$[A\ot B]=R_\C$, see Section \ref{fuscat}.

The implication (ii) $\Rightarrow$ (iii) is immediate and the equivalence (iii) $\Leftrightarrow$ (iv)
follows from Remark \ref{etsd} since $\Hom_\C(A,\,B)=\Hom_\C(\be,\,^*A\ot B)\simeq \Hom_\C(\be,\, A\ot B)$.

Let us prove implication (iii) $\Rightarrow$ (i). By Corollary \ref{equ} (i), the central functor
$F_B: \C \to \C_B$ is isomorphic to the forgetful functor $\Z(\C_B)\to \C_B$ (for a suitable choice of
braided equivalence $\C \simeq \Z(\C_B)$). Consider the category $\Rep_{\C_B}(A)$ (see
Section~ \ref{schauenburg}). Notice that by Remark \ref{left=right}(ii), this category coincides with the
category of $(A,B)-$bimodules in $\C$. Thus, we need to prove that $\Rep_{\C_B}(A)\simeq \Vec$.
Recall from Section~ \ref{schauenburg} that the category $\Rep_{\C_B}(A)$ has a structure of
multi-fusion category. On the other hand the unit object $A\ot B$ of this category
is irreducible since $\Hom_{A-B}(A\ot B,\,A\ot B)=\Hom_\C(\be,\, A\ot B)$. Thus, the multi-fusion
category $\Rep_{\C_B}(A)$ is in fact a fusion category. By Theorem \ref{petersch} and
Remark \ref{M2rep0} we have
$\Z(\Rep_{\C_B}(A))=\C_A^0=\Vec$. Thus \eqref{dimZ} implies that $\FPdim(\Rep_{\C_B}(A))=1$,
whence $\Rep_{\C_B}(A)=\Vec$.
\end{proof}




\section{Definition and properties of the Witt group}
\subsection{Definition of the Witt group}
\begin{definition} 
\label{Witt eq}
Non-degenerate braided fusion categories $\C_1$ and $\C_2$
are {\em Witt equivalent} if there exists a braided equivalence
$\C_1\boxtimes \Z(\A_1)\simeq \C_2\boxtimes \Z(\A_2)$, where $\A_1$, $\A_2$ are fusion categories.
\end{definition}

\begin{remark} \label{multiW}
The equivalence relation in Definition \ref{Witt eq} will not change if we allow $\A_1$ and $\A_2$ 
to be non-zero multi-fusion categories. Indeed, assume that $\C_1\boxtimes \Z(\A_1)\simeq \C_2\boxtimes \Z(\A_2)$ where $\A_1$ and $\A_2$ are multi-fusion categories. We can assume
that $\A_1$ and $\A_2$ are indecomposable in the sense of \cite[Section 2.4]{ENO1} (replace 
$\A_1$ and $\A_2$ by suitable summands otherwise). It follows from 
\cite[Lemma 3.24, Corollary 3.35]{EO} that for an indecomposable multi-fusion category
$\A$ there exists a fusion category $\A'$ and a braided equivalence $\Z(\A)\simeq \Z(\A')$.
Our statement follows.
\end{remark} 

It is easy to see that  Witt equivalence is indeed an equivalence relation.
For example the transitivity holds since the conditions 
$\C_1\boxtimes \Z(\A_1)\simeq \C_2\boxtimes \Z(\A_2)$
and $\C_2\boxtimes \Z(\A_2')\simeq \C_3\boxtimes \Z(\A_3)$ imply
$$\C_1\boxtimes \Z(\A_1 \bt \A_2')\simeq \C_2\boxtimes \Z(\A_2\bt \A_2')\simeq 
\C_2\boxtimes \Z(\A_2'\bt \A_2)\simeq \C_3\bt \Z(\A_3 \bt \A_2).$$
We will denote the Witt equivalence class containing a category $\C$ by $[\C]$.
The set of Witt equivalence classes of non-degenerate braided fusion categories
will be denoted $\W$. Clearly $\W$ is a commutative monoid with respect to the
operation $\boxtimes$. The unit of this monoid is $[\Vec]$.

\begin{lemma} \label{Wisgroup}
The monoid $\W$ is a group.
\end{lemma}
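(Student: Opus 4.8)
The plan is to show that every Witt class $[\C]$ has an inverse, namely $[\C^{\rev}]$. The key fact is the braided equivalence $G: \C\boxtimes\C^{\rev}\iso\Z(\C)$ from \eqref{Gfun}, which holds precisely because $\C$ is non-degenerate (this was noted after \eqref{Gfun}, citing \cite[Theorem 7.10]{Mu4} and \cite[Proposition 3.7]{DGNO}). Thus $\C\boxtimes\C^{\rev}\simeq\Z(\C)\simeq\Vec\boxtimes\Z(\C)$, which by Definition~\ref{Witt eq} says exactly that $\C\boxtimes\C^{\rev}$ is Witt equivalent to $\Vec$. In monoid terms, $[\C]\cdot[\C^{\rev}]=[\C\boxtimes\C^{\rev}]=[\Z(\C)]=[\Vec]$, the identity element.

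First I would observe that $\C^{\rev}$ is again a non-degenerate braided fusion category: its centralizer inside $\C^{\rev}$ is the same full subcategory as $\C'$ computed in $\C$ (the centralizing condition \eqref{monodromy-drinf} is symmetric in the two objects), so $(\C^{\rev})' = \C' = \Vec$. Hence $[\C^{\rev}]\in\W$ is a legitimate element of the monoid. Next I would invoke \eqref{Gfun} to get the braided equivalence $\C\boxtimes\C^{\rev}\simeq\Z(\C)$. Since $\C$ is itself a fusion category, $\Z(\C)$ is a Drinfeld center of a fusion category, so writing the trivial equivalence $\C\boxtimes\C^{\rev}\boxtimes\Z(\Vec)\simeq\Vec\boxtimes\Z(\C)$ (using $\Z(\Vec)\simeq\Vec$) exhibits $\C\boxtimes\C^{\rev}$ and $\Vec$ as Witt equivalent in the precise sense of Definition~\ref{Witt eq}. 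Therefore $[\C]\boxtimes[\C^{\rev}]=[\Vec]$, so $[\C^{\rev}]$ is a two-sided inverse of $[\C]$ (the monoid is commutative), and $\W$ is a group.

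There is essentially no obstacle here; the only point requiring a line of care is confirming that $\C^{\rev}$ is non-degenerate so that $[\C^{\rev}]$ actually lies in $\W$, and recalling that $\Z(\Vec)\simeq\Vec$ so that the equivalence $\C\boxtimes\C^{\rev}\simeq\Z(\C)$ can be massaged into the literal form demanded by Definition~\ref{Witt eq}. Both are immediate. The substantive input — that $G$ of \eqref{Gfun} is an equivalence iff $\C$ is non-degenerate — has already been quoted in the excerpt, so nothing new needs to be proved.

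\begin{proof}
Let $\C$ be a non-degenerate braided fusion category. The reverse category $\C^{\rev}$ is also non-degenerate: the relation \eqref{monodromy-drinf} defining when two objects centralize each other is symmetric, so the centralizer of $\C^{\rev}$ in itself coincides with $\C'=\Vec$. Hence $[\C^{\rev}]\in\W$.

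By the discussion following \eqref{Gfun}, the functor $G:\C\boxtimes\C^{\rev}\to\Z(\C)$ is a braided equivalence since $\C$ is non-degenerate. As $\Z(\Vec)\simeq\Vec$, we may rewrite this as a braided equivalence
\[
\C\boxtimes\C^{\rev}\boxtimes\Z(\Vec)\;\simeq\;\Vec\boxtimes\Z(\C),
\]
which, by Definition~\ref{Witt eq}, says that $\C\boxtimes\C^{\rev}$ is Witt equivalent to $\Vec$. Therefore in the monoid $\W$ we have $[\C]\boxtimes[\C^{\rev}]=[\C\boxtimes\C^{\rev}]=[\Vec]$, the unit of $\W$. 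Since $\W$ is commutative, $[\C^{\rev}]$ is a two-sided inverse of $[\C]$, and $\W$ is a group.
\end{proof}
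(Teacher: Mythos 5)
Your proof is correct and follows essentially the same route as the paper: the inverse of $[\C]$ is $[\C^{\rev}]$, via the braided equivalence $\C\boxtimes\C^{\rev}\simeq\Z(\C)$ for non-degenerate $\C$. The extra checks you include (non-degeneracy of $\C^{\rev}$ and rewriting the equivalence in the literal form of Definition~\ref{Witt eq}) are fine but are exactly the details the paper leaves implicit.
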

\begin{proof} For a non-degenerate braided fusion category $\C$ we have
$\Z(\C)\simeq \C \boxtimes \C^{\rev}$, see Section~ \ref{dcenter}. Thus $[\C]^{-1}=[\C^{\rev}]$.
\end{proof}

\begin{proposition}
\label{rep0} 
Let $A\in \C$ be an \'etale connected algebra. Then $[\C_A^0] = [\C]$ in $\W$.
\end{proposition}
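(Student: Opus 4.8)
The plan is to read the result off from Corollary~\ref{ZCA} together with the group structure of $\W$ recorded in Lemma~\ref{Wisgroup}; no new construction is needed.

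First I would observe that $\C_A$ is a fusion category (see Section~\ref{catCA2}) and that $\C_A^0$ is non-degenerate (Corollary~\ref{ZCA}). Hence $[\C]$ and $[\C_A^0]$ are well-defined elements of $\W$, and $\Z(\C_A)$ is a Drinfeld center of a fusion category in the sense of Definition~\ref{Witt eq}. Any such center is Witt trivial: indeed $\Vec\boxtimes\Z(\C_A)\simeq\Z(\C_A)\boxtimes\Z(\Vec)$, so $[\Z(\C_A)]=[\Vec]$, the unit of $\W$. Now the braided equivalence $\Z(\C_A)\simeq\C\boxtimes(\C_A^0)^{\rev}$ of Corollary~\ref{ZCA}, read inside $\W$, gives
\[
[\C]\cdot[(\C_A^0)^{\rev}]=[\Z(\C_A)]=[\Vec].
\]

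Next I would invoke the fact, established in the proof of Lemma~\ref{Wisgroup} via $\Z(\D)\simeq\D\boxtimes\D^{\rev}$, that $[\D^{\rev}]=[\D]^{-1}$ for every non-degenerate braided fusion category $\D$; applying this to $\D=\C_A^0$ yields $[(\C_A^0)^{\rev}]=[\C_A^0]^{-1}$. Substituting into the displayed identity gives $[\C]\cdot[\C_A^0]^{-1}=[\Vec]$, hence $[\C_A^0]=[\C]$, as claimed.

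There is no serious obstacle here once Corollary~\ref{ZCA} is in hand; the only mild points of care are checking that $\C_A$ is fusion (not merely multi-fusion), so that $\Z(\C_A)$ qualifies as a Drinfeld center on the nose (alternatively one may appeal to Remark~\ref{multiW}), and using $[\D^{\rev}]=[\D]^{-1}$ rather than attempting to exhibit an explicit Witt equivalence between $\C_A^0$ and $\C$ by hand.
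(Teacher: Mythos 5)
Your proposal is correct and is essentially the paper's own argument: the paper derives the statement immediately from Definition~\ref{Witt eq}, Lemma~\ref{Wisgroup} (which gives $[\D^{\rev}]=[\D]^{-1}$), and Corollary~\ref{ZCA}, exactly the three ingredients you combine. Spelling out that $[\Z(\C_A)]=[\Vec]$ and then cancelling $[(\C_A^0)^{\rev}]$ is just the expanded version of what the paper leaves implicit.
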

\begin{proof}
This is immediate from Definition~\ref{Witt eq}, Lemma \ref{Wisgroup} and  Corollary \ref{ZCA} .
\end{proof}


%

\begin{definition} 
The abelian group $\W$ defined above is called {\em the Witt group of non-degenerate
braided fusion categories.}
\end{definition}

\begin{remark}\label{galois}
It is apparent from the definition that the group $\W$ depends on the base field $\kk$ and should be
denoted $\W(\kk)$. However it is known that any fusion category (or braided fusion category) is defined
over the field of algebraic numbers $\bar \BQ$, see \cite[Section~ 2.6]{ENO1}. Thus an embedding
$\bar \BQ \subset \kk$ induces an isomorphism $\W(\bar \BQ)\simeq \W(\kk)$. In this sense we can
talk about {\it the} Witt group of non-degenerate braided fusion categories (without mentioning 
the field $\kk$). Of course this implies that the group $\W$ carries a natural action of the absolute
Galois group $Gal(\bar \BQ/\BQ)$ and should be considered together with this action.
\end{remark}

\begin{remark} 
\label{Wcount}
It follows from \cite[Theorems 2.28, 2.31, and Remark 2.33]{ENO1} that there
are countably many non-equivalent braided fusion categories. In particular,
the group $\W$ is at most countable. We will see later that $\W$ is infinite.
\end{remark}

\begin{proposition} 
\label{whenCisinVec}
Let $\C$ be a non-degenerate braided fusion category.
Then $\C \in [\Vec]$ if and only if there exist a fusion
category $\A$ and a braided equivalence $\C \cong \Z(\A)$.
\end{proposition}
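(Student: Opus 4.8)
The plan is to prove both directions, the first of which is essentially formal. If $\C\cong\Z(\A)$ for some fusion category $\A$, then taking $\A_1=\Vec$ and $\A_2=\A$ in Definition~\ref{Witt eq} and using that $\Z(\Vec)\cong\Vec$ is the unit for $\boxtimes$, one gets a braided equivalence $\C\boxtimes\Z(\Vec)\cong\C\cong\Z(\A)=\Vec\boxtimes\Z(\A)$, so $\C\in[\Vec]$. The real content is the converse: from a braided equivalence $\C\boxtimes\Z(\A_1)\cong\Z(\A_2)$ with $\A_1,\A_2$ fusion, produce a fusion category whose center is $\C$.

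The strategy is to realize $\C$ as the category of dyslectic modules over a connected \'etale algebra in the center $\Z(\A_2)$, and then appeal to Theorem~\ref{petersch} to see that such a category is again (equivalent to) the center of a fusion category. First I would take the canonical Lagrangian algebra $L_1=I_1(\be)\in\Z(\A_1)$, where $I_1$ is the right adjoint of the forgetful functor $\Z(\A_1)\to\A_1$ (Lemma~\ref{centraletale}); by the proof of Proposition~\ref{dva} the pair $(\Z(\A_1),L_1)$ is a quantum Manin pair, hence $(\Z(\A_1))^0_{L_1}=\Vec$ by Remark~\ref{M2rep0}. Next, viewing $L_1$ as a (clearly connected \'etale) algebra in $\D:=\C\boxtimes\Z(\A_1)$ through the embedding $\Z(\A_1)=\Vec\boxtimes\Z(\A_1)\hookrightarrow\D$, the same elementary reasoning as in the proof of Lemma~\ref{ZCA0} --- the braiding of a Deligne product is the external product of the braidings, and $L_1$ is supported in the second tensor factor --- gives
\[
\D^0_{L_1}=\C\boxtimes(\Z(\A_1))^0_{L_1}=\C\boxtimes\Vec=\C .
\]
Transporting $L_1$ along the given braided equivalence $\D\cong\Z(\A_2)$ yields a connected \'etale algebra $B\in\Z(\A_2)$ with $\Z(\A_2)^0_B\cong\C$.

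It then remains to recognize $\Z(\A_2)^0_B$ as a center. By Theorem~\ref{petersch}, $\Z(\A_2)^0_B\cong\Z(\Rep_{\A_2}(B))$, the Drinfeld center of the multi-fusion category $\Rep_{\A_2}(B)$. Since $B$ is connected, $\Rep_{\A_2}(B)$ is indecomposable (Remark~\ref{indmult}), so by \cite[Lemma 3.24, Corollary 3.35]{EO} (cf.\ Remark~\ref{multiW}) there is an honest fusion category $\A$ with $\Z(\Rep_{\A_2}(B))\cong\Z(\A)$. Stringing the (braided) equivalences together gives $\C\cong\Z(\A)$, as desired.

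The argument is mostly a matter of assembling results already available in the paper; the two points needing a little care are the identity $\D^0_{L_1}=\C$ (a routine Deligne-product computation of the kind underlying Lemma~\ref{ZCA0}) and the fact that $\Rep_{\A_2}(B)$, being a priori only multi-fusion, may be replaced by a fusion category with the same center. The step I would single out as the conceptual heart is the observation that $\C$ occurs as $\bigl(\C\boxtimes\Z(\A_1)\bigr)^0_{L_1}$ and that Theorem~\ref{petersch} is precisely the statement that dyslectic modules over a connected \'etale algebra in a center again form a center.
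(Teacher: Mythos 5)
Your proof is correct, and its skeleton is the same as the paper's: both take the canonical Lagrangian algebra $I_1(\be)\in\Z(\A_1)$, transport it along the given equivalence to a connected \'etale algebra in $\Z(\A_2)$, apply Theorem~\ref{petersch} to recognize the dyslectic modules as $\Z(\Rep_{\A_2}(B))$, and invoke Remarks~\ref{indmult} and \ref{multiW} to replace the indecomposable multi-fusion category by a fusion one. The only genuine divergence is how the identification $\C\simeq\Z(\A_2)^0_B$ is established: the paper constructs the explicit braided functor $X\mapsto(X\boxtimes\be)\ot A$ into $\Z(\A_2)^0_A$, checks it is fully faithful, and then compares Frobenius--Perron dimensions (using $\FPdim(A)^2=\FPdim(\Z(\A_1))$) to conclude via \cite[Proposition 2.19]{EO}; you instead compute directly that $\bigl(\C\boxtimes\Z(\A_1)\bigr)^0_{L_1}=\C\boxtimes(\Z(\A_1))^0_{L_1}=\C$, i.e.\ you run the Deligne-product factorization underlying Lemma~\ref{ZCA0} with $\Z(\A_1)$ in place of $\C^{\rev}$, together with Remark~\ref{M2rep0}. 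Your route is slightly shorter and avoids the dimension count and the appeal to \cite{EO}, at the cost of asserting (correctly, and in the same ``obvious'' spirit as Lemma~\ref{ZCA0}) that modules and dyslectic modules over an algebra supported in one Deligne factor themselves factorize as braided categories; the paper's dimension-count argument sidesteps that factorization claim by working entirely inside $\Z(\A_2)$.
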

\begin{proof} 
By definition, $\C \in [\Vec]$ if and only if $\C \boxtimes \Z(\B_1)\simeq \Z(\B_2)$
with fusion categories $\B_1$ and $\B_2$. By Proposition \ref{dva} there exists 
a connected \'etale algebra $A\in \Z(\B_1)$
such that $(\Z(\B_1),A)$ is a quantum Manin pair, see Definition~\ref{qMp}. 
By abuse of notation we will denote by $A\in \Z(\B_2)$ the image of $\be \bt A$
under the equivalence $\C \boxtimes \Z(\B_1)\simeq \Z(\B_2)$.
Consider the 
multi-fusion category
$\A =\Rep_{\B_2}(A)$, see Section~ \ref{schauenburg}. By Theorem~\ref{petersch} we have $\Z(\A)\cong
\Z(\B_2)^0_A$. On the other hand we have an obvious injective braided tensor functor 
\begin{equation}
\label{CtoZB2}
\C \to \Z(\B_2)^0_A : X\mapsto (X\boxtimes \be)\ot A.
\end{equation}
We have
\[
\FPdim(\C)=\frac{\FPdim(\Z(\B_2))}{\FPdim(\Z(\B_1))}=
\frac{\FPdim(\Z(\B_2))}{\FPdim(A)^2}=\FPdim(\Z(\B_2)_A^0),
\]
i.e.,  \eqref{CtoZB2} is a fully faithful tensor functor between
fusion categories of equal Frobenius-Perron dimension. Therefore,
it is an equivalence by \cite[Proposition 2.19]{EO}. The Proposition follows,
see Remarks \ref{indmult} and \ref{multiW}.
\end{proof}

\begin{corollary}\label{easywitt}
 We have $[\C]=[\D]$ if and only if there exists a fusion category $\A$
and a braided equivalence $\C \boxtimes \D^{\rev}\simeq \Z(\A)$.
\end{corollary}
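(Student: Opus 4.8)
The plan is to deduce Corollary~\ref{easywitt} from Proposition~\ref{whenCisinVec} together with the group structure on $\W$ established in Lemma~\ref{Wisgroup}. The key observation is that the two conditions in the statement are each equivalent to the single condition $[\C\boxtimes\D^{\rev}]=[\Vec]$, so the corollary is essentially a restatement of Proposition~\ref{whenCisinVec} after translating along the group law.

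First I would note that since $\W$ is a group (Lemma~\ref{Wisgroup}) with $[\D]^{-1}=[\D^{\rev}]$, the equality $[\C]=[\D]$ holds if and only if $[\C]\cdot[\D^{\rev}]=[\Vec]$, i.e.\ $[\C\boxtimes\D^{\rev}]=[\Vec]$. Here I should check that $\C\boxtimes\D^{\rev}$ is again a non-degenerate braided fusion category so that the statement makes sense: this follows because the reverse of a non-degenerate braided fusion category is non-degenerate (its centralizer is unchanged as a full subcategory), and a Deligne product of non-degenerate braided fusion categories is non-degenerate (the centralizer of $\C_1\boxtimes\C_2$ is $\C_1'\boxtimes\C_2'=\Vec\boxtimes\Vec=\Vec$, using that $\FPdim$ is multiplicative under $\boxtimes$, or citing \cite{DGNO}). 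Then I would simply apply Proposition~\ref{whenCisinVec} to the non-degenerate braided fusion category $\C\boxtimes\D^{\rev}$: it lies in $[\Vec]$ if and only if there is a fusion category $\A$ with a braided equivalence $\C\boxtimes\D^{\rev}\cong\Z(\A)$. Chaining the two equivalences gives exactly the claim.

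The main (and essentially only) obstacle is the bookkeeping in the first step: one must be careful that the manipulation $[\C]=[\D]\iff[\C\boxtimes\D^{\rev}]=[\Vec]$ is legitimate, which requires knowing $[\D\boxtimes\D^{\rev}]=[\Z(\D)]=[\Vec]$ --- but this is immediate from $\Z(\D)\cong\D\boxtimes\D^{\rev}$ (Section~\ref{dcenter}) and the definition of Witt equivalence, and is precisely the content of Lemma~\ref{Wisgroup}. Everything else is formal. In short: $[\C]=[\D]$ $\iff$ $[\C\boxtimes\D^{\rev}]=[\Vec]$ $\iff$ (by Proposition~\ref{whenCisinVec}) there exists a fusion category $\A$ with $\C\boxtimes\D^{\rev}\simeq\Z(\A)$, which is the desired statement.
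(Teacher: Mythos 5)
Your proposal is correct and matches the paper's intent: the corollary is stated without proof precisely because it is the immediate consequence of Proposition~\ref{whenCisinVec} combined with the group structure of $\W$ from Lemma~\ref{Wisgroup}, which is exactly the translation $[\C]=[\D]\iff[\C\boxtimes\D^{\rev}]=[\Vec]$ that you carry out. Your added check that $\C\boxtimes\D^{\rev}$ is non-degenerate (so that the proposition applies) is the right point to verify and is handled correctly.
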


\subsection{Completely anisotropic categories}

\begin{definition} 
We say that a non-degenerate braided fusion category is {\em
completely anisotropic} if the only connected \'etale algebra $A\in \C$ is $A=\be$.
\end{definition}

\begin{remark}
A completely anisotropic non-degenerate braided fusion category has no
Tannakian subcategories other than $\Vec$, i.e., it is {\em anisotropic}
in the sense of \cite[Definition 5.16]{DGNO}.
\end{remark}

\begin{lemma} \label{caninj}
 Let $\C$ be a completely anisotropic category, $\A$ be a fusion category,
and let $F: \C \to \A$ be a central functor. Then $F$ is fully faithful.
\end{lemma}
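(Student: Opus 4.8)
The plan is to use the central functor $F:\C\to\A$ to produce a connected \'etale algebra $A=I(\be)\in\C$, where $I:\A\to\C$ is the right adjoint of $F$, and then invoke the hypothesis that $\C$ is completely anisotropic to conclude $A=\be$; once $A=\be$, full faithfulness of $F$ should follow from Lemma~\ref{centraletale}. First I would recall that by Lemma~\ref{centraletale} the object $A=I(\be)$ carries a canonical structure of connected \'etale algebra in $\C$, and moreover the category $\C_A$ of right $A$-modules is monoidally equivalent to the image of $F$, i.e.\ the smallest fusion subcategory of $\A$ containing $F(\C)$. Since $\C$ is completely anisotropic, the only connected \'etale algebra in $\C$ is $\be$, hence $A=\be$.

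Next I would observe that with $A=I(\be)=\be$, the adjunction identity $\Hom_\A(F(X),F(Y))$ needs to be compared with $\Hom_\C(X,Y)$. The cleanest route: for any $X,Y\in\C$ we have
\[
\Hom_\A(F(X),F(Y))\cong\Hom_\A(F(X\ot {}^*Y),\be)\cong\Hom_\C(X\ot{}^*Y,\,I(\be))=\Hom_\C(X\ot{}^*Y,\,\be)\cong\Hom_\C(X,Y),
\]
using rigidity of $\A$ and $\C$ (both are fusion categories, so all objects are dualizable and $F$ preserves duals up to the canonical isomorphisms), together with the adjunction $\Hom_\A(F(-),\be)\cong\Hom_\C(-,I(\be))$. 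Chasing through, one checks this composite of natural isomorphisms is precisely the map $\Hom_\C(X,Y)\to\Hom_\A(F(X),F(Y))$ induced by $F$; hence $F$ is fully faithful. Alternatively, and perhaps more in the spirit of the paper, one can argue that $\C_A\simeq\C_\be=\C$ as tensor categories and under the identification of $\C_A$ with the image of $F$ the free module functor $F_\be:\C\to\C_\be$ is just $F$ itself (composed with the inclusion of its image), so $F$ factors as the identity $\C\xrightarrow{\sim}\C_A$ followed by a fully faithful inclusion into $\A$.

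The main obstacle I anticipate is purely bookkeeping: verifying that the chain of natural isomorphisms above really is the map induced by $F$ on $\Hom$-spaces, i.e.\ that the unit/counit of the $(F,I)$-adjunction interact correctly with the rigidity structures. This is routine but requires care, since $F$ being a tensor functor means it commutes with duals only up to specified coherent isomorphisms, and one must track these through. A second, minor point to be careful about: the statement of Lemma~\ref{centraletale} gives the equivalence $\C_A\simeq\Image(F)$ as an abstract monoidal equivalence, and one should make sure this equivalence is compatible with the evident functors from $\C$ (namely $F_A$ on one side and $F$ on the other), which is implicit in its proof via \cite[Theorem 3.17]{EO}; invoking that compatibility is what lets us transfer "$F_A$ is the identity when $A=\be$" into "$F$ is fully faithful". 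Neither point is deep, so the proof should be short.
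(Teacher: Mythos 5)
Your proof is correct and follows essentially the same route as the paper's: complete anisotropy forces $I(\be)=\be$ via Lemma~\ref{centraletale}, and then the chain $\Hom_\C(X,Y)\cong\Hom_\C(X\ot{}^*Y,I(\be))\cong\Hom_\A(F(X\ot{}^*Y),\be)\cong\Hom_\A(F(X),F(Y))$ is exactly the paper's argument. Your extra care about identifying this composite with the map induced by $F$ (or, alternatively, using faithfulness of tensor functors plus the dimension count) is a fair point that the paper leaves implicit, but it is the same proof.
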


\begin{proof} Let $I: \A \to \C$ be the right adjoint of $F$. Since $\C$ is completely
anisotropic, Lemma \ref{centraletale} implies  that $I(\be)=\be$. Thus
\begin{eqnarray*}
\Hom_\C(X,Y) &\cong& \Hom_\C(X\ot {}^*Y,\be) \cong \Hom_\C(X\ot {}^*Y,I(\be))\\
&\cong& \Hom_\A(F(X\ot {}^*Y),\be)  \cong \Hom_\A(F(X)\ot {}^*F(Y),\be) \\
&\cong& \Hom_\A(F(X),F(Y)).
\end{eqnarray*}
The result follows.
\end{proof}

We will say that a connected \'etale algebra $A$ in a braided fusion category $\C$ is {\em maximal}
if it is not a proper subalgebra of another such algebra. For any $\C$ there exists at least one 
maximal connected \'etale algebra since by \eqref{FPdimCA} the Frobenius-Perron dimensions of 
connected \'etale algebras are bounded by $\FPdim(\C)$.

\begin{theorem}
\label{unica}
Each Witt equivalence class in $\W$ contains a completely anisotropic category that is unique up to 
braided equivalence.
\end{theorem}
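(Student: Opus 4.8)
The plan is to prove Theorem~\ref{unica} in two parts: existence of a completely anisotropic representative in each Witt class, and its uniqueness up to braided equivalence.

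\textbf{Existence.} Given a non-degenerate braided fusion category $\C$, I would take a \emph{maximal} connected \'etale algebra $A\in\C$, which exists by the remark preceding the theorem since $\FPdim(A)\le\sqrt{\FPdim(\C)}$ by \eqref{FPdimCA} (applied to $\C_A^0$ via \eqref{FPdimCA0}, or just \eqref{FPdimCA} and positivity). By Corollary~\ref{ZCA} the category $\C_A^0$ is non-degenerate, and by Proposition~\ref{rep0} we have $[\C_A^0]=[\C]$ in $\W$. It then remains to check that $\C_A^0$ is completely anisotropic. Suppose $B'$ is a connected \'etale algebra in $\C_A^0$. By Proposition~\ref{overetale}, $B'$ corresponds to a connected \'etale algebra $B$ over $A$ in $\C$, i.e.\ a connected \'etale algebra $B\in\C$ together with an algebra embedding $A\hookrightarrow B$ (using that $f$ is automatically injective, as noted before Proposition~\ref{overetale}). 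By maximality of $A$ we get $B=A$, hence $B'=\be$, so $\C_A^0$ is completely anisotropic.

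\textbf{Uniqueness.} Suppose $\C_1$ and $\C_2$ are both completely anisotropic and $[\C_1]=[\C_2]$. By Corollary~\ref{easywitt} there is a fusion category $\A$ and a braided equivalence $\C_1\boxtimes\C_2^{\rev}\simeq\Z(\A)$. The unit embedding $\C_1=\C_1\boxtimes\be\hookrightarrow\C_1\boxtimes\C_2^{\rev}\simeq\Z(\A)\to\A$ is a central functor from $\C_1$ (its central structure comes from the braided functor into $\Z(\A)$), so by Lemma~\ref{caninj} it is fully faithful; likewise the analogous central functor $\C_2^{\rev}\to\A$ is fully faithful, hence so is $\C_2\to\A^{\rev}$ (equivalently, $\C_2^{\rev}\hookrightarrow\A$ identifies $\C_2^{\rev}$ with a fusion subcategory of $\A$, so $\C_2$ with one of $\A^{\rev}$; what matters is the image inside $\Z(\A)$). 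Inside $\Z(\A)$ the images of $\C_1$ and $\C_2^{\rev}$ are mutually centralizing (this is built into the equivalence $\Z(\A)\simeq\C_1\boxtimes\C_2^{\rev}$ and the general fact $\C'=\C^{\rev}$ inside a center). Now I would run a dimension-counting argument: since $\C_1$ is non-degenerate, $\FPdim(\C_1)\FPdim(\C_1')=\FPdim(\Z(\A))=\FPdim(\C_1)\FPdim(\C_2)$, so $\FPdim(\C_1')=\FPdim(\C_2^{\rev})$; combined with $\C_2^{\rev}\subseteq\C_1'$ (mutual centralizing) and the fact that a fully faithful tensor functor between fusion categories of equal FP dimension is an equivalence (\cite[Proposition 2.19]{EO}), we get $\C_1'=\C_2^{\rev}$ inside $\Z(\A)$. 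Applying $(-)'$ and using $\D''=\D$ in the non-degenerate category $\Z(\A)$ gives $\C_1=(\C_2^{\rev})'$; but also $(\C_2^{\rev})'=\C_2$ since $\C_2$ is non-degenerate so its image and the image of its reverse are each other's centralizers in $\Z(\A)$. Hence the images of $\C_1$ and $\C_2$ coincide as fusion subcategories of $\Z(\A)$, and the composite $\C_2\hookrightarrow\Z(\A)\xrightarrow{\sim}\Z(\A)$ followed by the identification yields a braided equivalence $\C_1\simeq\C_2$.

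\textbf{Main obstacle.} The delicate point is the uniqueness half: making precise that inside $\Z(\A)\simeq\C_1\boxtimes\C_2^{\rev}$ the two factors really are each other's centralizers with the correct Frobenius–Perron dimensions, and that equality of images of fusion subcategories of $\Z(\A)$ upgrades to a \emph{braided} equivalence $\C_1\simeq\C_2$ (not merely a tensor equivalence). For the latter I would note that both $\C_1\to\Z(\A)$ and $\C_2\to\Z(\A)$ are braided functors onto the \emph{same} fusion subcategory $\D\subseteq\Z(\A)$, each an equivalence onto $\D$ by the FP-dimension argument; composing one with the inverse of the other gives a braided autoequivalence identification. Care is also needed because Corollary~\ref{easywitt} only gives the equivalence after the stabilization $\C_1\boxtimes\C_2^{\rev}$ — this is exactly why completely anisotropic (hence the hypothesis of Lemma~\ref{caninj}) is what makes the embeddings $\C_i\hookrightarrow\Z(\A)$ fully faithful and lets us recover $\C_1$ and $\C_2$ as literal subcategories rather than just Witt-equivalent ones.
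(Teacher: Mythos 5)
Your existence argument is correct and is essentially the paper's: a maximal connected \'etale algebra $A$ exists, $[\C_A^0]=[\C]$ by Proposition~\ref{rep0}, and Proposition~\ref{overetale} translates maximality of $A$ into complete anisotropy of $\C_A^0$.

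The uniqueness half, however, has a genuine gap at the step ``$(\C_2^{\rev})'=\C_2$ since $\C_2$ is non-degenerate so its image and the image of its reverse are each other's centralizers in $\Z(\A)$.'' The data you have is a braided equivalence $\C_1\boxtimes\C_2^{\rev}\simeq\Z(\A)$; this gives braided embeddings of $\C_1$ and of $\C_2^{\rev}$ into $\Z(\A)$, but there is no given braided functor from $\C_2$ itself into $\Z(\A)$ (the general fact that $\C$ and $\C^{\rev}$ centralize each other holds inside $\Z(\C)$, not inside the center of an unrelated category $\A$). By your own previous step, the centralizer of the image of $\C_2^{\rev}$ in $\Z(\A)$ is the image of $\C_1$; asserting that this centralizer ``is $\C_2$'' is therefore exactly the statement $\C_1\simeq\C_2$ you are trying to prove, so the argument is circular. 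Note also that everything you actually use after invoking Lemma~\ref{caninj} is only the product decomposition of $\Z(\A)$, and that alone cannot suffice: for $\A=\B_1\boxtimes\B_2$ with $\B_1,\B_2$ non-degenerate one has $\Z(\A)\simeq\C_1\boxtimes\C_2^{\rev}$ with $\C_1\simeq\Z(\B_1)$, $\C_2\simeq\Z(\B_2)$, and these need not be equivalent; so complete anisotropy must enter beyond merely recording that the functors $\C_i\to\A$ are fully faithful.

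The missing step --- and the route the paper takes --- is to exploit that full faithfulness together with a Frobenius--Perron count forces $\C_1\to\A$ (and $\C_2^{\rev}\to\A$) to be an \emph{equivalence}: from Lemma~\ref{caninj} one gets $\FPdim(\C_i)\le\FPdim(\A)$, while $\FPdim(\C_1)\FPdim(\C_2)=\FPdim(\Z(\A))=\FPdim(\A)^2$ by \eqref{dimZ}, so $\FPdim(\C_1)=\FPdim(\C_2)=\FPdim(\A)$ and \cite[Proposition 2.19]{EO} applies. Transporting the braiding of $\C_1$ to $\A$ identifies $\Z(\A)\simeq\Z(\C_1)$ compatibly with the embedding of $\C_1$, so the centralizer of the image of $\C_1$ in $\Z(\A)$ is $\C_1^{\rev}$ (Section~\ref{dcenter}); comparing with the centralizer $\C_2^{\rev}$ computed from the product decomposition yields $\C_1^{\rev}\simeq\C_2^{\rev}$, hence the braided equivalence $\C_1\simeq\C_2$. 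Your draft stops just short of this: you record the full faithfulness but never upgrade it to an equivalence with $\A$, which is the only place where complete anisotropy does real work in the uniqueness proof.
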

\begin{proof} 
Let $\C$ be a non-degenerate braided fusion category.
Let $A\in \C$ be a maximal connected \'etale algebra 
By Proposition \ref{overetale} any connected \'etale algebra in $\C_A^0$ can be considered 
as a connected \'etale algebra in $\C$, so maximality
of $A$ is equivalent to $\C_A^0$ being completely anisotropic. Thus, Proposition~\ref{rep0}
implies that any Witt equivalence class contains a completely anisotropic category.

Now let $\C$ and $\D$ be two completely anisotropic categories such that $[\C]=[\D]$.
By Corollary \ref{easywitt} there exists a fusion category $\A$ and a braided equivalence
$\C \boxtimes \D^{\rev}\simeq \Z(\A)$. In particular we have central functors $\C \to \A$
and $\D^{\rev}\to \A$. By Lemma \ref{caninj} these functors are fully faithful. Hence
$\FPdim(\C)\le \FPdim(\A)$ and $\FPdim(\D)\le \FPdim(\A)$. Combining this with \eqref{dimZ}
we see that $\FPdim(\C)=\FPdim(\D)=\FPdim(\A)$ and the functor $\C \to \A$ (and $\D^{\rev}\to \A$)
is an equivalence. 
In particular $\A$ acquires a structure (in fact, two structures) of non-degenerate braided fusion category.
Let $\C'$ be the centralizer of $\C$ in $\C \bt \D^{\rev}\simeq \Z(\A)\simeq \Z(\C)$. Then on one hand 
$\C'=\D^{\rev}$ and on the other hand $\C'=\C^{\rev}$, see Section~\ref{dcenter}. The result follows.
\end{proof}

\begin{corollary} Let $A$ and $B$ be two maximal connected \'etale algebras in a non-degenerate
braided fusion category $\C$. Then there exists a braided equivalence $\C_A^0\simeq \C_B^0$.
In particular {\em $\FPdim(A)=\FPdim(B)$}.
\end{corollary}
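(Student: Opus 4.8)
The plan is to deduce this corollary directly from Theorem~\ref{unica}, since it concerns the uniqueness of maximal connected \'etale algebras up to the resulting dyslectic-module category. First I would recall, as in the proof of Theorem~\ref{unica}, that maximality of a connected \'etale algebra $A\in\C$ is equivalent (via Proposition~\ref{overetale}) to $\C_A^0$ being completely anisotropic. Likewise, maximality of $B$ means $\C_B^0$ is completely anisotropic. So both $\C_A^0$ and $\C_B^0$ are completely anisotropic non-degenerate braided fusion categories (non-degeneracy comes from Corollary~\ref{ZCA}).

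Next I would observe that by Proposition~\ref{rep0} we have $[\C_A^0]=[\C]=[\C_B^0]$ in the Witt group $\W$. Thus $\C_A^0$ and $\C_B^0$ are two completely anisotropic categories lying in the same Witt equivalence class. By the uniqueness part of Theorem~\ref{unica}, they must be braided equivalent: $\C_A^0\simeq\C_B^0$. This is the whole content of the first assertion.

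For the final numerical claim, I would apply equation~\eqref{FPdimCA0}: since $\C_A^0\simeq\C_B^0$ as braided fusion categories, in particular $\FPdim(\C_A^0)=\FPdim(\C_B^0)$, so $\FPdim(\C)/\FPdim(A)^2=\FPdim(\C)/\FPdim(B)^2$, whence $\FPdim(A)^2=\FPdim(B)^2$ and therefore $\FPdim(A)=\FPdim(B)$ (both being positive real numbers).

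There is no real obstacle here; the corollary is essentially a repackaging of Theorem~\ref{unica} combined with Proposition~\ref{rep0}, Proposition~\ref{overetale}, and the dimension formula~\eqref{FPdimCA0}. The only point requiring a moment's care is the identification of maximality of $A$ with complete anisotropy of $\C_A^0$, but this was already established within the proof of Theorem~\ref{unica} and can simply be cited.
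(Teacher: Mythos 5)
Your proposal is correct and follows exactly the paper's own argument: the paper deduces the braided equivalence $\C_A^0\simeq\C_B^0$ immediately from Theorem~\ref{unica} (whose proof already contains the identification of maximality of $A$ with complete anisotropy of $\C_A^0$ via Proposition~\ref{overetale} and Proposition~\ref{rep0}), and then obtains $\FPdim(A)=\FPdim(B)$ from \eqref{FPdimCA0}. You have merely spelled out the steps the paper leaves implicit.
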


\begin{proof} The first statement is immediate from Theorem~\ref{unica}. The second one
follows from \eqref{FPdimCA0}.
\end{proof}

The following result shows that Witt equivalence can also be understood without reference to the Drinfeld center:

\begin{proposition}
Let $\C_1, \C_2$ be non-degenerate braided fusion categories. Then the following are equivalent:
\begin{itemize}
\item[(i)] $[\C_1]=[\C_2]$, i.e.\ $\C_1$ and $\C_2$ are Witt equivalent.
\item[(ii)] There exist a braided fusion category $\C$, connected \'etale algebras $A_1,A_2\in\C$ and braided 
equivalences $\C_1\stackrel{\simeq}{\rightarrow} \C_{A_1}^0,\ \C_2\stackrel{\simeq}{\rightarrow} \C_{A_2}^0$. 
\item[(iii)] There exist connected \'etale algebras $A_1\in\C_1,A_2\in\C_2$ and a braided equivalence
$(\C_1)_{A_1}^0\stackrel{\simeq}{\rightarrow}(\C_2)_{A_2}^0$.
\end{itemize}
\end{proposition}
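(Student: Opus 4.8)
The plan is to prove the cycle of implications $(ii)\Rightarrow(i)$, $(i)\Rightarrow(iii)$, and the trivial $(iii)\Rightarrow(ii)$. The last one needs no work: given the data in $(iii)$, set $\C=\C_1$, $A_1\in\C_1$ as given, and for $A_2$ take the connected \'etale algebra in $\C=\C_1$ obtained by transporting $A_2\in\C_2$ through the braided equivalence $(\C_1)_{A_1}^0\simeq(\C_2)_{A_2}^0$ together with Proposition~\ref{overetale}, which identifies connected \'etale algebras in $(\C_1)_{A_1}^0$ with connected \'etale algebras in $\C_1$ lying over $A_1$; this produces a single $\C$ carrying both algebras with the required equivalences, so $(ii)$ holds. (Strictly one should check that the composite of a de-equivariantization-type quotient of a quotient is again such a quotient, but this is exactly the content of Proposition~\ref{overetale}.)

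For $(ii)\Rightarrow(i)$: by Proposition~\ref{rep0} we have $[\C_{A_i}^0]=[\C]$ in $\W$ for each $i$, so $[\C_1]=[\C]=[\C_2]$ and the categories are Witt equivalent. This is immediate once Proposition~\ref{rep0} is in hand.

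The substantive implication is $(i)\Rightarrow(iii)$. Here I would use Corollary~\ref{easywitt}: $[\C_1]=[\C_2]$ means there is a fusion category $\A$ with a braided equivalence $\C_1\boxtimes\C_2^{\rev}\simeq\Z(\A)$. By Proposition~\ref{dva} (or directly Lemma~\ref{centraletale} applied to the forgetful functor $\Z(\A)\to\A$) there is a Lagrangian algebra $L=I(\be)\in\Z(\A)\simeq\C_1\boxtimes\C_2^{\rev}$. Now the idea is to take the two obvious connected \'etale algebras $P=A_{\E_1}\boxtimes\be$ and $Q=\be\boxtimes A_{\E_2}$ coming from maximal Tannakian subcategories of $\C_1$ and $\C_2^{\rev}$ — wait, that is not quite what we want; rather I would argue as follows. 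Restrict $L$ along the embedding $\C_1=\C_1\boxtimes\be\hookrightarrow\C_1\boxtimes\C_2^{\rev}$ only formally: what we actually need is an algebra $A_1\in\C_1$ and $A_2\in\C_2$ with $(\C_1)_{A_1}^0\simeq(\C_2)_{A_2}^0$. Set $\D:=\C_1\boxtimes\C_2^{\rev}$, a non-degenerate braided fusion category, and let $A_2^{\mathrm{op}}\in\C_2^{\rev}$ be a maximal connected \'etale algebra of $\C_2^{\rev}$, viewed as the connected \'etale algebra $\be\boxtimes A_2^{\mathrm{op}}\in\D$; by Lemma~\ref{ZCA0}-type reasoning (Corollary~\ref{ZCA}) one computes $\D_{\be\boxtimes A_2^{\mathrm{op}}}^0\simeq\C_1\boxtimes(\C_2^{\rev})_{A_2^{\mathrm{op}}}^0=\C_1\boxtimes\big((\C_2)_{A_2}^0\big)^{\rev}$, and similarly with the roles of $1$ and $2$ exchanged. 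The cleanest route, though, is: choose a maximal connected \'etale algebra $B_i\in\C_i$, so $(\C_i)_{B_i}^0$ is completely anisotropic (as in the proof of Theorem~\ref{unica}) and $[(\C_i)_{B_i}^0]=[\C_i]$ by Proposition~\ref{rep0}; since $[\C_1]=[\C_2]$, the two completely anisotropic categories $(\C_1)_{B_1}^0$ and $(\C_2)_{B_2}^0$ are Witt equivalent, hence braided equivalent by the uniqueness in Theorem~\ref{unica}. Taking $A_i=B_i$ gives exactly $(iii)$.

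The main obstacle is not really an obstacle once Theorem~\ref{unica} is available: the whole weight of $(i)\Rightarrow(iii)$ is carried by the existence of a maximal connected \'etale algebra (noted before Theorem~\ref{unica}), the identification of $\C_A^0$ for maximal $A$ as completely anisotropic, Proposition~\ref{rep0}, and the uniqueness clause of Theorem~\ref{unica}. So I would structure the write-up as three short paragraphs: $(iii)\Rightarrow(ii)$ via Proposition~\ref{overetale}; $(ii)\Rightarrow(i)$ via Proposition~\ref{rep0}; and $(i)\Rightarrow(iii)$ via maximal \'etale algebras plus Theorem~\ref{unica}. The only place demanding care is verifying in $(iii)\Rightarrow(ii)$ that nesting two quotient constructions is again a quotient construction of the stated form, which is precisely Proposition~\ref{overetale} together with the observation that $\big(\C_{A_1}^0\big){}_{\bar A_2}^0 \simeq \C_{A_2}^0$ for $A_2$ the \'etale algebra over $A_1$ corresponding to $\bar A_2$.
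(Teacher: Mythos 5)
Your implications (ii)$\Rightarrow$(i) (via Proposition~\ref{rep0}) and (i)$\Rightarrow$(iii) (maximal connected \'etale algebras, so that the quotients are completely anisotropic, then Proposition~\ref{rep0} and the uniqueness in Theorem~\ref{unica}) are correct and are exactly the paper's arguments. The problem is the third leg of your cycle: the claimed ``trivial'' implication (iii)$\Rightarrow$(ii) does not work, and since it is your only route to statement (ii), that statement is never established. Taking $\C=\C_1$ cannot succeed in general: an equivalence $\C_2\simeq (\C_1)_B^0$ forces $\FPdim(\C_2)=\FPdim(\C_1)/\FPdim(B)^2\le \FPdim(\C_1)$, which need not hold under (iii). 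Concretely, let $\C_1=\Vec$, $A_1=\be$, and $\C_2=\Z(\A)$ for a nontrivial fusion category $\A$ with $A_2$ a Lagrangian algebra; then $(\C_1)_{A_1}^0=\Vec=(\C_2)_{A_2}^0$, so (iii) holds, but the only \'etale algebra in $\C=\Vec$ is $\be$ and $\Vec_{\be}^0=\Vec\not\simeq\C_2$. Moreover, ``transporting $A_2$ through the equivalence $(\C_1)_{A_1}^0\simeq(\C_2)_{A_2}^0$'' is not meaningful: $A_2$ is an object of $\C_2$, not of $(\C_2)_{A_2}^0$; what Proposition~\ref{overetale} matches with \'etale algebras over $A_1$ in $\C_1$ are \'etale algebras \emph{in} $(\C_1)_{A_1}^0$, and the transported unit object just gives back $A_1$, whose dyslectic quotient is $(\C_1)_{A_1}^0$, not $\C_2$. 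Your closing identity $\bigl(\C_{A_1}^0\bigr)_{\bar A_2}^0\simeq\C_{A_2}^0$ only lets you pass further \emph{down} from $\C$; for (ii) you must produce a category sitting \emph{above} both $\C_1$ and $\C_2$.

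The paper avoids this by proving (iii)$\Rightarrow$(i) directly (again just Proposition~\ref{rep0}: $[\C_1]=[(\C_1)_{A_1}^0]=[(\C_2)_{A_2}^0]=[\C_2]$) and then proving (i)$\Rightarrow$(ii) separately. For the latter, start from a braided equivalence $F:\C_1\boxtimes\Z(\A_1)\simeq\C_2\boxtimes\Z(\A_2)$ as in Definition~\ref{Witt eq}, set $\C=\C_2\boxtimes\Z(\A_2)$, and take $A_1=F(\be\boxtimes I_1(\be))$, $A_2=\be\boxtimes I_2(\be)$, where $I_i$ is right adjoint to the forgetful functor $\Z(\A_i)\to\A_i$. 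Since $I_i(\be)$ is Lagrangian, $\Z(\A_i)_{I_i(\be)}^0=\Vec$, so $\C_i\simeq\C_i\boxtimes\Z(\A_i)_{I_i(\be)}^0$ and, applying $F$ in the first case, one gets the required braided equivalences $\C_1\simeq\C_{A_1}^0$ and $\C_2\simeq\C_{A_2}^0$. If you repair your write-up by replacing (iii)$\Rightarrow$(ii) with this (i)$\Rightarrow$(ii) argument (keeping your (iii)$\Rightarrow$(i) observation), you recover the paper's proof.
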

\begin{proof}
The implications (ii)$\Rightarrow$(i) and (iii)$\Rightarrow$(i) are immediate by Proposition \ref{rep0}.

(i)$\Rightarrow$(ii): By Definition~\ref{Witt eq}, we have a braided equivalence
$$F:\C_1\boxtimes \Z(\A_1)\simeq \C_2\boxtimes \Z(\A_2).$$ Thus we can define $\C$ to be $\C_2\boxtimes \Z(\A_2)$, 
the algebra $A_1$ to be $F(\be \boxtimes I_1(\be))$ and the algebra $A_2$ to be 
$\be \boxtimes I_2(\be)$. 
Here $I_i:\A_i\to \Z(\A_i)$ are right adjoints to the forgetful functors $\Z(\A_i)\to\A_i$. 
Finally we define the braided equivalence $\C_1\to \C_{A_1}^0$ as
$$\C_1\to \C_1\boxtimes \Z(\A_1)_{I_1(\be)}^0\stackrel{F}{\longrightarrow} (\C_2\boxtimes \Z(\A_2))_{A_1}^0 = \C_{A_1}^0$$ 
and the braided equivalence $\C_2\to \C_{A_2}^0$ as
$$\C_2\to \C_2\boxtimes \Z(\A_2)_{I_2(\be)}^0 = \C_{A_2}^0.$$

(i)$\Rightarrow$(iii) Choose \'etale algebras $A_i\in\C_i$ such that the categories $(\C_i)_{A_i}^0$ are completely
anisotropic. Now $[(\C_1)_{A_1}^0]=[\C_1]=[\C_2]=[(\C_2)_{A_2}^0]$ together with Theorem \ref{unica} implies the 
existence of a braided equivalence $(\C_1)_{A_1}^0\stackrel{\simeq}{\rightarrow}(\C_2)_{A_2}^0$.
\end{proof}

\begin{remark}
1. The proposition implies that Witt equivalence is the equivalence relation $\sim$ on non-degenerate braided fusion 
categories generated by ordinary braided equivalence $\simeq$ and the relations $\C\sim\C_A^0$, where  $A\in\C$
is an \'etale algebra. But the proposition is more precise in that it says that any two Witt equivalent categories
can be joined by just two invocations of $\C\sim\C_A^0$ and either one (part (iii)) or two (part (ii))
braided equivalences.

2. The proposition has applications to conformal field theory, cf.\ \cite{mu6}.
\end{remark}

\subsection{The Witt group of metric groups and pointed categories} 
Recall that a {\em quadratic form} with values in $\kk^\times$ on a finite abelian group $A$ is a function
$q: A\to \kk^\times$ such that  $q(-x)=q(x)$ and $b(x,y)=\frac{q(x+y)}{q(x)q(y)}$ is bilinear, see
e.g. \cite[Section~ 2.11.1]{DGNO}. The pair $(A,\,q)$ consisting of finite abelian group and quadratic form
$q: A\to \kk^\times$ is called a {\em pre-metric group}, see \cite[Section~ 2.11.2]{DGNO}. A pre-metric group
$(A,\,q)$ is called {\em metric group} if the form $q$ is non-degenerate (i.e., the associated 
bilmultiplicative form $b(x,y)$ is non-degenerate). 

To a pre-metric group $(A,\,q)$ one assigns a unique up to a braided equivalence 
pointed braided fusion category $\C(A,\,q)$, where  $q(a)\in \kk^\times$ equals 
the braiding on the simple object $X_a\ot X_a$ where $X_a$ 
is a representative of an isomorphism class $a\in A$
(see e.g., \cite[Section~ 2.11.5]{DGNO}). It was shown in \cite{JS2} that this assignment is
an equivalence between the 1-categorical truncation of the 2-category 
of pre-metric groups and that of the 2-category of pointed braided fusion categories.

The category $\C(A,\,q)$ is non-degenerate if and only if $(A,\,q)$ is a metric group, 
see \cite[Sections 2.11.5 and 2.8.2]{DGNO}.

Let $(A,\,q)$ be a metric group and let $H\subset A$ be an {\em isotropic} subgroup (that is, $q|_H=1$). Then $H\subset H^\perp$
where $H^\perp$ is the orthogonal complement of $H$ in $A$ with respect to the bilinear form $b(x,y)$. Moreover, the restriction
of $q$ to $H^\perp$ is the pull-back of a non-degenerate quadratic form $\tilde q: H^\perp /H\to \kk^\times$. We say that
$(H^\perp /H,\tilde q)$ is an {\em m-subquotient} of $(A,\,q)$. Two metric groups are {\em Witt equivalent} if they have
isomorphic m-subquotients (for some choice of isotropic subgroups in each of them), cf.\ \cite[Appendix A.7.1]{DGNO}. The set of equivalence classes has a natural structure of abelian group
(with addition induced by the orthogonal direct sum) and is called the {\em Witt group of metric groups}, see {\em loc.\ cit.} 
We will denote this group $\W_{pt}$. 

\begin{proposition} 
The assignment 
\begin{equation}
\label{Wpt}
\W_{pt}\to \W : (A,\,q)\mapsto [\C(A,\,q)]
\end{equation}
induces a well defined injective homomorphism $\W_{pt}\to \W$.
\end{proposition}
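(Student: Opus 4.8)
The plan is to check three things: that the assignment \eqref{Wpt} is multiplicative (so that, once well-defined, it is a group homomorphism), that it descends to Witt classes, and that it has trivial kernel. Multiplicativity is immediate: the Deligne product of pointed braided fusion categories is pointed with underlying pre-metric group the orthogonal direct sum, so $\C(A_1,q_1)\boxtimes\C(A_2,q_2)\simeq\C(A_1\oplus A_2,q_1\oplus q_2)$, while $\C(0,1)=\Vec$. Since the operation on $\W_{pt}$ is orthogonal sum and on $\W$ is $\boxtimes$, this makes \eqref{Wpt} a homomorphism of monoids, hence of groups, as soon as it is well-defined on equivalence classes.

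For well-definedness it suffices to show that passing to an m-subquotient leaves the image unchanged. So let $(A,q)$ be a metric group and $H\subset A$ an isotropic subgroup. From $q|_H=1$ and bilinearity of $b$ one gets $b|_{H\times H}=1$, so the pointed full subcategory $\E=\langle X_h\mid h\in H\rangle\subset\C(A,q)$ is symmetric with trivial twists, hence Tannakian, $\E\cong\Rep(\widehat H)$; its regular algebra $A_H=\bigoplus_{h\in H}X_h$ is then a connected \'etale algebra by Example~\ref{regularce}(i). By Example~\ref{tannaka rep0}, $\C(A,q)_{A_H}^0$ is the de-equivariantization of $\E'=\langle X_a\mid a\in H^\perp\rangle$, and unwinding the de-equivariantization of a pointed category by a Tannakian pointed subcategory identifies it with the pointed category $\C(H^\perp/H,\tilde q)$ of the m-subquotient (cf.\ \cite{DGNO}). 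Proposition~\ref{rep0} then gives $[\C(A,q)]=[\C(A,q)_{A_H}^0]=[\C(H^\perp/H,\tilde q)]$. Consequently, if $(A_1,q_1)$ and $(A_2,q_2)$ have isomorphic m-subquotients, then $[\C(A_1,q_1)]=[\C(A_2,q_2)]$, using \cite{JS2} to turn the isomorphism of metric groups into a braided equivalence of the associated pointed categories.

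For injectivity the key point is that an anisotropic metric group $(A,q)$ (i.e.\ $q(a)\ne1$ for all $a\ne0$) yields a completely anisotropic category $\C(A,q)$: indeed $\C(A,q)$ is a modular category with $\theta_{X_a}=q(a)$, and a connected \'etale algebra $L=\bigoplus_a n_aX_a$ in a pre-modular category satisfies $\theta_L=\id_L$ \cite{KiO}, which forces $q(a)=1$ whenever $n_a\ne0$; by anisotropy $L$ is then supported at $0$ and, being connected, $L=\be$. Now suppose $[\C(A,q)]=[\C(A',q')]$ in $\W$, and choose anisotropic representatives $(A_0,q_0)\sim(A,q)$, $(A_0',q_0')\sim(A',q')$ in $\W_{pt}$ (these exist, cf.\ \cite{DGNO}). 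By the well-definedness just proved, $\C(A_0,q_0)$ and $\C(A_0',q_0')$ are completely anisotropic categories lying in the same class $[\C(A,q)]=[\C(A',q')]$, so Theorem~\ref{unica} gives a braided equivalence $\C(A_0,q_0)\simeq\C(A_0',q_0')$; by \cite{JS2} this comes from an isomorphism $(A_0,q_0)\cong(A_0',q_0')$ of metric groups, hence $(A,q)\sim(A',q')$ in $\W_{pt}$. Taking $(A',q')=(0,1)$ shows the kernel is trivial. The one step requiring real work is the identification in the second paragraph of $\C(A,q)_{A_H}^0$ with $\C(H^\perp/H,\tilde q)$: this is a bookkeeping computation with the abelian $3$-cocycle of $\C(A,q)$ in which one must check that the quadratic form induced on $H^\perp/H$ by de-equivariantization is exactly $\tilde q$. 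Everything else is formal given the results already established.
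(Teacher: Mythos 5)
Your overall route is the same as the paper's: well-definedness via the Tannakian subcategory $\C(H,1)$ attached to an isotropic subgroup, its regular algebra $B$, the identification $\C(A,q)_B^0\simeq\C(H^\perp/H,\tilde q)$ (which you, like the paper, state rather than verify in detail), and Proposition~\ref{rep0}; injectivity via anisotropic representatives and Theorem~\ref{unica}. The added multiplicativity check and the use of \cite{JS2} to pass between braided equivalences and isomorphisms of metric groups are fine.

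There is, however, a genuine gap in your injectivity step. The lemma you invoke --- that a connected \'etale algebra $L$ in a pre-modular category satisfies $\theta_L=\id_L$, attributed to \cite{KiO} --- is not what \cite[Theorem 3.3]{KiO} says (there $\theta_A=\id_A$ is a \emph{hypothesis}, not a conclusion), and it is false at the stated generality: \'etale-ness is independent of the spherical structure, so e.g.\ $\Rep(\BZ/2)$ with the non-standard spherical structure (sign representation of dimension $-1$, hence twist $-1$) is pre-modular and its regular algebra $\be\oplus\chi$ is connected \'etale with $\theta\ne\id$; one gets non-degenerate examples by doing the same with a non-canonical spherical structure on the toric code and the algebra $\be\oplus e$. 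In the case you actually need --- $\C(A,q)$ with its canonical ribbon, where $\theta_{X_a}=q(a)$ --- the assertion ``$\theta_L=\id_L$'' is exactly the statement that the support of $L$ is isotropic, i.e.\ it is the very point at issue, so the citation does not carry the argument. The fix is short and direct: for any invertible $X_a$ one has $\End_{\C_L}(X_a\ot L)\cong\Hom_\C(\be,L)=\kk$, so $X_a\ot L$ is simple in $\C_L$; hence $L$ is multiplicity-free, and $a$ lies in the support iff $X_a\ot L\cong L$ in $\C_L$, which shows the support is a subgroup $H$ and that all multiplication components $X_h\ot X_{h'}\to X_{h+h'}$ are nonzero (they assemble the isomorphism $X_h\ot L\cong L$). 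Commutativity applied to the component $X_h\ot X_h\to X_{2h}$, where the braiding is the scalar $q(h)$, then forces $q(h)=1$ for all $h\in H$. With this replacement (or any other proof that connected \'etale algebras in $\C(A,q)$ are supported on isotropic subgroups --- the fact the paper dismisses as ``clear''), your argument is complete and agrees with the paper's.
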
 
\begin{proof} 
Let $H\subset A$ be an isotropic subgroup. Then the corresponding subcategory 
$\C(H,1)\subset \C(A,\,q)$ is Tannakian, see e.g. \cite[Example 2.48]{DGNO}. Let $B\in \C(H,1)$ be
the corresponding regular algebra, see \ref{regular}. Then the category $\C(A,\,q)_B^0$ identifies
with $\C(H^\perp/H,\tilde q)$. In particular, $[\C(A,\,q)]=[\C(H^\perp/H,\tilde q)]$.
This implies that \eqref{Wpt} is well defined. 

It is known (see \cite[Section A.7.1]{DGNO}) that each class in $\W_{pt}$ has 
a representative $(A,\,q)$ which is anisotropic, that is
$q(x)\ne 1$ for $A\ni x\ne 1$. It is clear that the corresponding category $\C(A,\,q)$ is completely
anisotropic. Thus, \eqref{Wpt} is injective by Theorem~\ref{unica}.
\end{proof}

In what follows we will identify the group $\W_{pt}$ with its image in $\W$.
The group $\W_{pt}$ is explicitly known, see e.g., \cite[Appendix A.7]{DGNO}. Namely, 
\[
\W_{pt}=\bigoplus_{p\text{ is prime}}\, \W_{pt}(p),
\]
where $\W_{pt}(p)\subset \W_{pt}$
consists of the classes of metric $p-$groups. 

The group $\W_{pt}(2)$ is isomorphic to $\BZ/8\BZ \oplus
\BZ/2\BZ$; it is generated by two classes $[\C(\BZ/2\BZ,\,q_1)]$ and $[\C(\BZ/4\BZ,\,q_2)]$, where 
$q_1,\, q_2$ are any non-degenerate forms. For $p\equiv 3 \,(\mathrm{mod}\,4)$ we have $\W_{pt}(p)\cong \BZ/4\BZ$ 
and the class $[\C(\BZ/p\BZ,\,q)]$ is a generator for any non-degenerate form $q$. For 
$p\equiv 1 \,(\mathrm{mod}\, 4)$
the group $\W_{pt}(p)$ is isomorphic to $\BZ/2\BZ \oplus \BZ/2\BZ$;  it is generated by the two classes 
$[\C(\BZ/p\BZ,\,q')]$ and $[\C(\BZ/p\BZ,\,q'')]$  with $q'(l)=\zeta^{l^2}$ and $q''(l)=\zeta^{nl^2}$, where
$\zeta$ is a primitive $p$th root of unity in $\kk$ and 
$n$ is any quadratic non-residue modulo $p$.

\subsection{Property S} \label{propS}
Let $\C$ be a non-degenerate braided fusion category.

\begin{definition}
We say that $\C$ has {\em property S} if the following conditions
are satisfied:
\begin{enumerate}
\item[(S1)] $\C$ is completely anisotropic;
\item[(S2)] $\C$ is simple (that is, $\C$ has no non-trivial fusion
subcategories) and not pointed (so in particular
$\C \ncong \Vec$).
\end{enumerate}
\end{definition}

We will also say that a class $w\in \W$ has property S if a completely anisotropic representative
of $w$ has
property S. In Section~ \ref{sl2} we will give infinitely many examples of non-degenerate braided fusion
categories with property S.



\begin{theorem} \label{prS}
Let $\D=\boxtimes_{i\in I}\, \C_i$ where $\C_i$ are braided fusion categories
with property S. Assume that $\D$ is a Drinfeld center of a fusion category.
Then there is a fixed point free involution $a: I\to I$ such that
$\C_{a(i)}\simeq \C_i^{\rev}$
\end{theorem}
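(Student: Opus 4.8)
The plan is to exploit the complete anisotropy of each $\C_i$ together with the hypothesis that $\D=\boxtimes_{i\in I}\C_i \cong \Z(\A)$ for some fusion category $\A$, and hence (by Lemma~\ref{Wisgroup} and Section~\ref{dcenter}) that $\D^{\rev}\cong\D$ via the canonical centralizer pairing, i.e.\ $\D'=\D^{\rev}$ inside $\Z(\D)$. First I would observe that since $\D\cong\Z(\A)$ it contains a Lagrangian algebra $L=I(\be)$ (Proposition~\ref{dva}), and by Proposition~\ref{whenCisinVec} (or directly from Corollary~\ref{easywitt} applied with $\C=\D$, $\D=\Vec$) we have $[\D]=[\Vec]$ in $\W$. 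Each $\C_i$ has property~S, so in particular is completely anisotropic and simple; I want to show the only way a box-product of such categories can be Witt-trivial is by a pairing-off of mutually reverse factors.

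The key technical step is to analyze fusion subcategories of $\D=\boxtimes_{i\in I}\C_i$. Since each $\C_i$ is simple (S2) and unpointed (S2), I would invoke the uniqueness part of Proposition~\ref{needed reference}: an unpointed non-degenerate braided fusion category has a unique prime decomposition up to permutation, and moreover by the Remark following that proposition the unpointed prime factors appear in \emph{every} prime factorization. So the multiset $\{\C_i\}_{i\in I}$ is an invariant of $\D$. Next I would use the projection/centralizer structure: for each $i$, the centralizer $\C_i'$ of $\C_i$ inside $\D$ is $\Vec\boxtimes\cdots\boxtimes\C_i^{\rev}\boxtimes\cdots\boxtimes\Vec$ — wait, more carefully: inside $\D$ itself each $\C_i$ is non-degenerate, so $\C_i'=\boxtimes_{j\neq i}\C_j$. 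The relevant reverse-pairing comes from the Lagrangian algebra: $L\in\D$ is connected \'etale with $\FPdim(L)^2=\FPdim(\D)$, and by Lemma~\ref{when XA is dys} / the discussion of $\D_L^0=\Vec$ (Remark~\ref{M2rep0}), the algebra $L$ "sees" all of $\D$. I would show that the image of $L$ under the projection functors forces the factors to cancel in reverse pairs.

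Concretely, here is the argument I expect to work. By Corollary~\ref{easywitt}, $[\D]=[\Vec]$ means $\D\cong\Z(\A)$, equivalently $\D\cong\D_0\boxtimes\D_0^{\rev}$ need not hold literally, but: take a maximal connected \'etale algebra; since $[\D]=[\Vec]$ and $\Vec$ is completely anisotropic, Theorem~\ref{unica} says the completely anisotropic representative of $[\D]$ is $\Vec$, so $\D_L^0\cong\Vec$ for the Lagrangian $L$. Now I claim each $\C_i$, being completely anisotropic itself, must be "cancelled" by a reverse partner. Consider the connected \'etale algebra $L\in\D=\boxtimes_i\C_i$. Project to $\C_i\boxtimes\C_j$ for a pair; using the lattice structure of \'etale subalgebras and the fact (Proposition~\ref{needed reference} uniqueness) that subcategories of $\boxtimes\C_i$ with $\C_i$ simple are exactly sub-box-products $\boxtimes_{i\in J}\C_i$, I would show $L$ restricted appropriately gives a Lagrangian algebra in $\C_i\boxtimes\C_j$ only when $\C_j\simeq\C_i^{\rev}$. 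The cleanest route: the completely anisotropic core of $\C_i\boxtimes\C_i^{\rev}$ is $\Vec$ (since it's a center), while for simple prime unpointed $\C_i,\C_j$ with $\C_i\boxtimes\C_j$ having $\Vec$ as an \'etale-algebra quotient one deduces from Proposition~\ref{overetale} and simplicity that the connected \'etale algebras of $\C_i\boxtimes\C_j$ are constrained; combined with the global condition $\D_L^0=\Vec$ and induction on $|I|$, one peels off one reverse pair at a time.

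\medskip

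The main obstacle, as I see it, is the inductive/peeling step: showing that a Lagrangian algebra in $\boxtimes_{i\in I}\C_i$ with all $\C_i$ property-S must "decompose" so that some factor $\C_{i_0}$ is paired with exactly one reverse partner $\C_{i_1}\simeq\C_{i_0}^{\rev}$, allowing one to quotient out $\C_{i_0}\boxtimes\C_{i_1}$ and reduce to smaller $I$. This requires controlling how a connected \'etale algebra in a box-product relates to its "projections," which is not entirely formal because \'etale algebras need not be box-products of \'etale algebras in the factors. I would handle this by passing to the Tannakian/centralizer picture: the Lagrangian $L$ determines a fully faithful central functor, and by Lemma~\ref{caninj} applied to each completely anisotropic $\C_i$, the functor $\C_i\to\D_L=\A$ is fully faithful; counting Frobenius--Perron dimensions via \eqref{dimZ} and $\FPdim(\A)^2=\FPdim(\D)=\prod_i\FPdim(\C_i)$ then forces $\FPdim(\A)=\prod_{i}\FPdim(\C_i)^{1/2}$, and the image subcategories $\C_i\subset\A$ together with the identification $\A'=\A^{\rev}$ inside $\Z(\A)=\D$ pin down the involution $a$. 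Establishing that this $a$ is fixed-point-free amounts to ruling out $\C_i\simeq\C_i^{\rev}$ contributing alone, which would make $\C_i$ itself a center — contradicting (S1), since a nontrivial center is never completely anisotropic (it has the Lagrangian algebra $I(\be)\neq\be$).
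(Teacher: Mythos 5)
You have assembled the right ingredients (Lemma~\ref{caninj}, Frobenius--Perron dimension counting, uniqueness of the prime decomposition from Proposition~\ref{needed reference}), but the proposal has a genuine gap at exactly the point you flag yourself: nothing in it actually establishes the pairing. Your primary route — peel off reverse pairs by "projecting" the Lagrangian algebra $L\in\D$ to pairs $\C_i\boxtimes\C_j$ — is not carried out, and the obstruction you name is real: a connected \'etale (in particular Lagrangian) algebra in a Deligne product does not decompose into, or canonically induce, \'etale algebras in sub-products, so there is no formal way to conclude that $L$ yields a Lagrangian algebra in some $\C_i\boxtimes\C_j$. Your fallback is also insufficient as stated: knowing that each composite $\C_i\to\Z(\A)\to\A$ is fully faithful (Lemma~\ref{caninj}) and that $\FPdim(\A)=\prod_i\FPdim(\C_i)^{1/2}$ does not tell you how the images of the various $\C_i$ sit inside $\A$ relative to one another, and in particular does not "pin down the involution." The fixed-point-freeness argument at the end ("$\C_i$ would itself be a center") likewise presupposes the pairing structure it is meant to refine.

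The missing step, which is the heart of the paper's proof, is the following Claim about the forgetful functor $F:\D=\Z(\A)\to\A$: ordering $I=\{1,\dots,n\}$ and letting $\A_i=F(\C_1\boxtimes\cdots\boxtimes\C_i)$, there is a subset $J_i\subset\{1,\dots,i\}$ such that $F$ restricts to an equivalence $\boxtimes_{j\in J_i}\C_j\simeq\A_i$. This is proved by induction, and both halves of property S are used in an essential way: if $F(\C_{i+1})$ meets $\A_i$ non-trivially then simplicity (S2) forces $F(\C_{i+1})\subset\A_i$; if it meets it trivially, one checks full faithfulness of $F$ on $\boxtimes_{j\in J_i\cup\{i+1\}}\C_j$ by the same adjunction trick as in Lemma~\ref{caninj}, reduced to the orthogonality computation $\Hom_\A(F(X)\ot F(Y),\be)=\Hom_\A(F(X),F(Y)^*)=0$ for simple $X\in\boxtimes_{j\in J_i}\C_j$, $Y\in\C_{i+1}$ not both trivial. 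Since $F$ is surjective, the case $i=n$ gives $\A\simeq\boxtimes_{j\in J_n}\C_j$, hence $\D\simeq\Z(\A)\simeq\boxtimes_{j\in J_n}(\C_j\boxtimes\C_j^{\rev})$; because each $\C_i$ is simple and not pointed, $\D$ is unpointed, so the uniqueness in Proposition~\ref{needed reference} matches this decomposition with $\boxtimes_{i\in I}\C_i$ and produces the fixed-point-free involution (each $j\in J_n$ contributes two distinct indices of $I$, even when $\C_j\simeq\C_j^{\rev}$). Without this Claim, or a substitute for it, the proposal does not yield the theorem.
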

\begin{proof} 
Assume that $\D=\Z(\A)$ for some fusion category $\A$. Let $F: \D=\Z(\A)\to \A$ be the
forgetful functor. Choose a bijection $I= \{1,\dots,n\}$. For $1\le i\le n$ let $\A_i$ be the image of
$\C_1\boxtimes \C_2\boxtimes \cdots \C_i$ under $F$ (so $\A_i$ is a fusion subcategory of $\A$).

{\bf Claim:} {\em There is a subset $J_i\subset \{1,\dots,i\}$ such that $F$ restricted to
$\boxtimes_{j\in J_i}\,\C_j\subset \D$ is an equivalence $\boxtimes_{j\in J_i}\,\C_j\simeq \A_i$.}

{\em Proof (of the claim).} We use induction in $i$.
For $i=1$ we set $J_1=\{ 1\}$; in this case the claim follows from Lemma \ref{caninj}.
Now consider the induction step. The subcategory $\A_{i+1}$ is clearly generated by $\A_i$ and
(the image of) $\C_{i+1} \subset \A$ (recall that by Lemma \ref{caninj}, the functor $F$ restricted to
 $\C_{i+1}$ is fully faithful). There are two possibilities:

(a) the subcategories $\A_i$ and $\C_{i+1}$ intersect non-trivially in $\A$; then $\A_i$
contains $\C_{i+1}$ since by (S2) $\C_{i+1}$ has no non-trivial subcategories. In this case we
set $J_{i+1}=J_i$.

(b) $\A_i$ and $\C_{i+1}$ intersect trivially. Then we set $J_{i+1}=J_i\cup \{ i+1\}$.
We claim that the forgetful functor $\boxtimes_{j\in J_{i+1}}\,\C_j\to \A$ is fully faithful.
As in the proof of Lemma \ref{caninj} it is sufficient to show that for any
object $Z\in \boxtimes_{j\in J_{i+1}}\C_j$ we have $\Hom_\A(F(Z),\be)=\Hom_\D(Z,\be)$. Clearly,
we can restrict ourselves to the case when $Z$ is simple. In this case $Z=X\boxtimes Y$ where
$X\in \boxtimes_{j\in J_{i}}\C_j$ and $Y\in \C_{i+1}$ are simple. Then $F(Z)=F(X)\ot F(Y)$ where
$F(X)\in \A_i$ and $F(Y)\in F(\C_{i+1})$ are simple. Then
$\Hom_\A(F(Z),\be)=\Hom_\A(F(X),\,F(Y)^*)=0$ unless $X=\be$ and $Y=\be$. We are done in this case
and the claim is proved.

We apply now the Claim with $i=n$; we see that $\A=\boxtimes_{j\in J_n}\C_j$.
Thus $\Z(\A)=\boxtimes_{j\in J_n}(\C_j\boxtimes \C_j^{\rev})$ (see Section~ \ref{dcenter}).
The category $\D$ does not contain non-trivial invertible objects. By Proposition~\ref{needed reference}
it has a unique decomposition into a product of simple categories.
The result follows.
\end{proof}


\begin{corollary} Let $\C$ be a category with property S. Then $[\C]\in \W$ has order 2
if $\C \simeq \C^{\rev}$ and otherwise $[\C]\in \W$ has infinite order. \qed
\end{corollary}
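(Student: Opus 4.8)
The plan is to deduce the statement formally from Lemma~\ref{Wisgroup}, Proposition~\ref{whenCisinVec} and the structural Theorem~\ref{prS}. The pivotal elementary remark is that a one-element set admits no fixed-point-free involution, so Theorem~\ref{prS} applied to the single-factor product $\D=\C$ (with $\C$ of property~S) already shows that such a $\C$ cannot be the Drinfeld center of a fusion category; by Proposition~\ref{whenCisinVec} this means $[\C]\neq[\Vec]$ for \emph{every} category $\C$ with property~S. More generally, if $[\C]^n=[\Vec]$ for some $n\geq 1$, then by Proposition~\ref{whenCisinVec} the product $\boxtimes_{i=1}^{n}\C$ is a Drinfeld center, and Theorem~\ref{prS} forces $\C\simeq\C^{\rev}$ (each factor being $\C$).

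Carrying this out: first suppose $\C\simeq\C^{\rev}$. By Lemma~\ref{Wisgroup} we have $[\C]^{-1}=[\C^{\rev}]=[\C]$, hence $[\C]^{2}=[\Vec]$ and the order of $[\C]$ is $1$ or $2$. By the remark above it is not $1$, so $[\C]$ has order exactly $2$.

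Next suppose $\C\not\simeq\C^{\rev}$, and assume toward a contradiction that $[\C]$ has finite order $n\geq 1$. Then $[\,\boxtimes_{i=1}^{n}\C\,]=[\C]^{n}=[\Vec]$, so by Proposition~\ref{whenCisinVec} the category $\D=\boxtimes_{i\in I}\C_i$, with $I=\{1,\dots,n\}$ and $\C_i=\C$ for all $i$, is the Drinfeld center of a fusion category. Each factor $\C_i$ has property~S, so Theorem~\ref{prS} produces a fixed-point-free involution $a\colon I\to I$ with $\C_{a(i)}\simeq\C_i^{\rev}$ for all $i$; since every factor equals $\C$, this says $\C\simeq\C^{\rev}$, contradicting the hypothesis. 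Hence $[\C]$ has infinite order.

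I do not anticipate a genuine obstacle here: the argument is purely formal once Theorem~\ref{prS} is available, and both cases are three-line deductions. The only point needing a moment's care is the legitimacy of the step ``$[\C]^{n}=[\Vec]\Rightarrow\boxtimes_{i=1}^{n}\C$ is the Drinfeld center of a \emph{fusion} (not merely multi-fusion) category,'' which is precisely what Proposition~\ref{whenCisinVec} supplies; one should also note that property~S is used essentially, since Theorem~\ref{prS} requires the factors to be simple and non-pointed.
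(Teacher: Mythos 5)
Your proof is correct and is exactly the argument the paper intends (the corollary is stated with a \qed as an immediate consequence of Theorem~\ref{prS} together with Proposition~\ref{whenCisinVec} and $[\C]^{-1}=[\C^{\rev}]$ from Lemma~\ref{Wisgroup}): apply Theorem~\ref{prS} to $\C^{\boxtimes n}$ when $[\C]^n=[\Vec]$, note a fixed-point-free involution forces $\C\simeq\C^{\rev}$ and rules out $n=1$. Your observation that the one-element case excludes $[\C]=[\Vec]$ is precisely the point needed to upgrade ``order dividing 2'' to ``order exactly 2''.
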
 

More precisely we have the following result. Let $\cS$ be the set of braided equivalence classes of
categories with property S. Let $\cS_2\subset \cS$ be the subset consisting of categories $\C$ such
that $\C \simeq \C^{\rev}$ and let $\cS_\infty =\cS \setminus \cS_2$. It is clear that the set $\cS$ is
at most countable, see Remark \ref{Wcount}. It follows from \eqref{sl2k+} in Section~\ref{sl2} below that
the set $\cS_\infty$ (and hence $\cS$) is infinite. Let $\cS_\infty'\subset \cS_\infty$ be a maximal
subset such that $\C \in \cS_\infty'$ implies $\C^\rev \not \in \cS_\infty'$

\begin{corollary} \label{ws}
Let $\W_S \subset \W$ be the subgroup generated by the categories with property S.
The map $(a_i)_{\C_i \in \cS}\mapsto \prod_{\C_i \in \cS}[\C_i]^{a_i}$ defines an isomorphism
$$\bigoplus_{\cS_2}\BZ/2\BZ \oplus \bigoplus_{\cS_\infty'}\BZ \simeq \W_S. \qed $$
\end{corollary}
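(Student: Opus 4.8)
The plan is to show that the indicated map is a well-defined surjective homomorphism and then that it is injective. Surjectivity is essentially by definition: $\W_S$ is generated by the classes $[\C]$ with $\C$ having property S, and every such $\C$ is braided equivalent either to some element of $\cS_2$ or to some element of $\cS_\infty$; in the latter case either $\C$ or $\C^\rev$ lies in $\cS_\infty'$, and since $[\C^\rev]=[\C]^{-1}$ this class is in the image. For the map to be well-defined as a homomorphism out of $\bigoplus_{\cS_2}\BZ/2\BZ \oplus \bigoplus_{\cS_\infty'}\BZ$ we must check that each $[\C]$ with $\C\in\cS_2$ satisfies $[\C]^2=1$: indeed $\C\simeq\C^\rev$ gives $[\C]=[\C]^{-1}$. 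So the real content is injectivity.

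For injectivity, suppose $(a_i)$ is a tuple (finitely supported) with $\prod_i [\C_i]^{a_i}=1$ in $\W$. Write this relation as $[\D_+]=[\D_-]$ where $\D_+=\boxtimes_{a_i>0}\C_i^{\boxtimes a_i}$ collects the positive exponents and $\D_-=\boxtimes_{a_i<0}\C_i^{\boxtimes(-a_i)}$ the negative ones (using $[\C_i]^{-1}=[\C_i^\rev]$), and for $\C_i\in\cS_2$ reduce $a_i$ modulo $2$ first so that $a_i\in\{0,1\}$. By Corollary \ref{easywitt} the equality $[\D_+]=[\D_-]$ means $\D_+\boxtimes\D_-^{\rev}\simeq\Z(\A)$ for some fusion category $\A$. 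Now $\D_+\boxtimes\D_-^{\rev}$ is a tensor product of categories with property S — namely copies of the $\C_i$ and $\C_i^\rev$ — so Theorem \ref{prS} applies: there is a fixed-point-free involution on the index set pairing each factor with a $\rev$-partner. The main step is then a bookkeeping argument: by Proposition \ref{needed reference} (uniqueness of prime decomposition for an unpointed non-degenerate category — note a category with property S is unpointed and simple, hence prime and unpointed, and $\D_+\boxtimes\D_-^{\rev}$ is unpointed since each factor is) the multiset of prime factors is determined, so the involution must match, up to braided equivalence, each $\C_i$ (with multiplicity) against a $\C_j^\rev$ (with multiplicity). Counting how many times a fixed braided-equivalence class $[\C]\in\cS$ occurs as a factor of $\D_+$ versus of $\D_-^\rev$, and recalling that elements of $\cS_\infty'$ satisfy $\C\not\simeq\C^\rev$ while distinct elements of $\cS$ are inequivalent, one deduces $a_i=0$ for all $i$ indexed by $\cS_\infty'$ and $a_i\equiv 0\pmod 2$, i.e.\ $a_i=0$ after our reduction, for $i$ indexed by $\cS_2$.

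The step I expect to be the main obstacle is the last one: carefully extracting from the fixed-point-free involution of Theorem \ref{prS} and the uniqueness in Proposition \ref{needed reference} the precise conclusion that the exponents vanish. One must be attentive to three possibilities for how a factor can be paired — a $\C_i$ in $\D_+$ with a $\C_j^\rev$ in $\D_+$ (forcing $\C_i\simeq\C_j^\rev$, impossible for $i,j$ ranging over $\cS_\infty'$ by the defining property of $\cS_\infty'$, and for $\cS_2$ forcing a relation we can absorb), a $\C_i$ in $\D_+$ with a factor of $\D_-^\rev$, and so on — and to the distinction between $\cS_2$ and $\cS_\infty'$ in each case. Once the combinatorics is set up correctly, matching multiplicities class by class in $\cS$ and using $[\C^\rev]=[\C]^{-1}$ closes the argument and shows the map is an isomorphism. \qed
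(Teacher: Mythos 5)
Your proposal is correct and follows essentially the same route the paper intends: the corollary is stated with no separate proof precisely because it is meant to follow from Corollary \ref{easywitt} plus Theorem \ref{prS} by exactly the bookkeeping you describe (reduce the $\cS_2$-exponents mod $2$, realize a relation $\prod_i[\C_i]^{a_i}=1$ as a box product of property-S categories equivalent to a Drinfeld center, and use the fixed-point-free involution together with the pairwise inequivalence of classes in $\cS$ and the fact that $\C\not\simeq\C^{\rev}$ for $\C\in\cS_\infty'$). The only loose phrase is ``a relation we can absorb'' in the $\cS_2$ case: the clean statement is that after the mod-$2$ reduction each $\cS_2$ class occurs at most once among the factors, and since $\C_i\simeq\C_i^{\rev}$ the fixed-point-free involution would have to pair that single copy with a distinct equivalent factor, which does not exist, forcing that exponent to be $0$.
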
 

\begin{remark} 
1. It is clear that the set $\cS_2$ is at most countable. However we don't know whether it is empty and 
we don't know whether it is finite.

2. The description of the group $\W_S$ above is non-canonical due to the choice of the set $\cS_\infty'$.
A better description is as follows: the set $\cS$ carries an involution $\sigma$ which sends $\C$ to
$\C^{\rev}$. We extend $\sigma$ to the involution of the free abelian group $\BZ[\cS]$ generated
by $\cS$ by linearity. Then $\W_S\simeq \BZ[\cS]/\text{Image}(1+\sigma)$.

3. An argument similar to the proof of Theorem \ref{prS} shows that $\W_S\cap \W_{pt}=\{ 1\}$. 
Thus the subgroup of $\W$ generated by $\W_S$ and $\W_{pt}$ is isomorphic to $\W_S\times \W_{pt}$.


4. Assume that $\C_i$ are braided fusion categories with property S and $\C_i^{\rev}\not \simeq \C_j$ 
for $j\ne i$. Corollary \ref{ws} implies that $[\boxtimes_{i\in I}\, \C_i]\ne 0$. A stronger statement is true:
the category $\D=\boxtimes_{i\in I}\, \C_i$ is completely anisotropic. Indeed, by Lemma \ref{eacfl}
it is sufficient to show that any surjective central functor $\D \to \A$ is an equivalence. This is proved
by an argument parallel to the proof of Theorem \ref{prS}; notice that the case (a) in the proof of 
the Claim never occurs since otherwise we would have a non-injective central functor
$\C_i\bt \C_j\to \A$; considering the image of this functor one shows that $\C_i^\rev \simeq \C_j$ 
as in the proof  of Theorem \ref{unica}.
\end{remark}

\begin{corollary} 
The $\BQ-$vector space $\W \ot_\BZ \BQ$ also has countable infinite dimension.
\end{corollary}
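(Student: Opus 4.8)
The plan is to pin down $\dim_\BQ(\W\ot_\BZ\BQ)$ by squeezing it between two countable bounds, both of which are essentially immediate from results already in hand. For the upper bound I would invoke Remark~\ref{Wcount}: the group $\W$ is at most countable, so it is a quotient of a countable free abelian group $\bigoplus_{n\in\BN}\BZ$; since $-\ot_\BZ\BQ$ is right exact, this yields a surjection $\bigoplus_{n\in\BN}\BQ\twoheadrightarrow \W\ot_\BZ\BQ$, whence $\dim_\BQ(\W\ot_\BZ\BQ)\le\aleph_0$.

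For the lower bound I would use Corollary~\ref{ws}, which identifies $\W_S$ with $\bigoplus_{\cS_2}\BZ/2\BZ\oplus\bigoplus_{\cS_\infty'}\BZ$. As recalled in the excerpt, the set $\cS_\infty$ is infinite, and since the involution $\sigma$ (sending $\C$ to $\C^{\rev}$) partitions $\cS_\infty$ into pairs, the chosen subset $\cS_\infty'$ is infinite as well. Hence $\W_S$ contains the free abelian group $\bigoplus_{\cS_\infty'}\BZ$ of countably infinite rank. Because $\W_S$ is by construction a subgroup of $\W$ and $\BQ$ is flat over $\BZ$, the inclusion induces an injection $\W_S\ot_\BZ\BQ\hookrightarrow\W\ot_\BZ\BQ$; and since the torsion summand $\bigoplus_{\cS_2}\BZ/2\BZ$ is annihilated by $\ot_\BZ\BQ$, we get $\W_S\ot_\BZ\BQ\cong\bigoplus_{\cS_\infty'}\BQ$, a $\BQ$-vector space of countably infinite dimension. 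Combining the two bounds gives $\dim_\BQ(\W\ot_\BZ\BQ)=\aleph_0$.

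I do not expect a genuine obstacle: once Corollary~\ref{ws} and the countability of $\W$ are granted, everything reduces to exactness/flatness of $-\ot_\BZ\BQ$ and the fact that tensoring with $\BQ$ kills torsion. The only point warranting a word of care is the infinitude of $\cS_\infty'$, which ultimately rests on the construction (in Section~\ref{sl2}, via the categories $\C(sl(2),k)$) of infinitely many pairwise non-equivalent braided fusion categories with property S whose reverses are pairwise inequivalent; taking that input for granted, the argument is a short formal computation.
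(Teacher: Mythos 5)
Your proposal is correct and follows essentially the same route as the paper: the lower bound comes from Corollary~\ref{ws} together with the infinitude of $\cS_\infty$ (hence of $\cS_\infty'$) and the exactness (flatness) of $?\ot_\BZ\BQ$ applied to the inclusion $\W_S\subset\W$, while the upper bound is the countability of $\W$ from Remark~\ref{Wcount}. You merely spell out the upper bound and the vanishing of the $\BZ/2\BZ$ summands more explicitly than the paper does.
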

\begin{proof}
Since $\cS_\infty$ is infinite, the $\BQ-$vector space $\W_S \ot_\BZ \BQ$ has countable infinite dimension. The result follows since the functor $?\ot_\BZ \BQ$ is exact.
\end{proof}

\subsection{Central charge} 
From now on we will assume that $\kk =\CC$. 
Recall that  any pseudo-unitary
non-degenerate braided fusion category has a natural structure of modular tensor category  (see, e.g.,
\cite[Section~ 2.8.2]{DGNO}). 

\begin{definition} 
\label{Wun}
Let $\W_{un}\subset \W$ be the subgroup consisting of Witt classes $[\C]$ of
pseudo-unitary non-degenerate braided fusion categories $\C$.
\end{definition}

\begin{remark}
Note that $\W_{un}$ is {\em not} invariant under the Galois action from Remark \ref{galois} (for
example class $[\C(sl(2),3)_+]\in \W_{un}$ from Section \ref{sl2} below  has a Galois conjugate not 
lying in $\W_{un}$). In particular, $\W_{un} \subsetneq \W$.
\end{remark}

Now recall that for a modular tensor category $\C$ one defines the {\em multiplicative central charge}
$\xi(\C)\in \CC$, see \cite[Section~ 6.2]{DGNO}. The following properties are well known, see, e.g, 
\cite[Section~ 3.1]{BaKi}.

\begin{lemma} \label{xi}
\begin{enumerate}
\item[(i)] $\xi(\C)$ is a root of unity;
\item[(ii)] $\xi(\C_1\bt \C_2)=\xi(\C_1)\xi(\C_2)$;
\item[(iii)] $\xi(\C^{\rev})=\xi(\C)^{-1}$. \qed
\end{enumerate}
\end{lemma}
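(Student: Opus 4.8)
The statement to prove is Lemma~\ref{xi}, which collects three well-known properties of the multiplicative central charge $\xi(\C)$ of a modular tensor category $\C$: that it is a root of unity, that it is multiplicative with respect to $\boxtimes$, and that it is inverted by passing to the reverse category. Since these are standard facts, my plan is to reduce everything to the definition of $\xi(\C)$ in terms of the Gauss sums and then invoke (or quickly re-derive) the Vafa-type rationality result.

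\medskip

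First I would recall the relevant definitions. For a modular category $\C$ one sets $\tau^{\pm}(\C) = \sum_{X \in \O(\C)} \theta_X^{\pm 1} \dim(X)^2$, the Gauss sums, where $\theta_X$ is the twist; then $\tau^+(\C)\tau^-(\C) = \dim(\C)$ and the multiplicative central charge is $\xi(\C) = \tau^+(\C)/\sqrt{\dim(\C)}$ (equivalently $\xi(\C)=\tau^+(\C)/|\tau^+(\C)|$ after fixing the positive square root, or as defined via the $S$ and $T$ matrices in \cite[Section~6.2]{DGNO}). For part (iii): passing from $\C$ to $\C^{\rev}$ replaces the braiding $c$ by $\tilde c_{X,Y} = c_{Y,X}^{-1}$, which replaces each twist $\theta_X$ by $\theta_X^{-1}$; hence $\tau^{\pm}(\C^{\rev}) = \tau^{\mp}(\C)$, while $\dim(\C^{\rev}) = \dim(\C)$. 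Therefore $\xi(\C^{\rev}) = \tau^-(\C)/\sqrt{\dim(\C)} = \overline{\tau^+(\C)}/\sqrt{\dim(\C)}$, and since $\tau^+(\C)\tau^-(\C)=\dim(\C)$ this equals $\sqrt{\dim(\C)}/\tau^+(\C) = \xi(\C)^{-1}$. For part (ii): the Gauss sum is multiplicative, $\tau^+(\C_1 \boxtimes \C_2) = \tau^+(\C_1)\tau^+(\C_2)$, because simple objects of $\C_1\boxtimes\C_2$ are $X_1 \boxtimes X_2$ with $\theta_{X_1\boxtimes X_2} = \theta_{X_1}\theta_{X_2}$ and $\dim(X_1\boxtimes X_2) = \dim(X_1)\dim(X_2)$, and likewise $\dim(\C_1\boxtimes\C_2) = \dim(\C_1)\dim(\C_2)$; dividing gives $\xi(\C_1\boxtimes\C_2) = \xi(\C_1)\xi(\C_2)$.

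\medskip

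The one part that is genuinely not formal is (i), that $\xi(\C)$ is a root of unity. The plan here is to cite the Anderson--Moore--Vafa theorem as proved categorically: in a modular (more generally pre-modular or even any ribbon fusion) category the twist $\theta_X$ is a root of unity for every object $X$ — this is \cite[Theorem~3.1.19]{BaKi} or the references given there — and then the Gauss sum identities together with the modular relation $(ST)^3 = \xi(\C)^{-1} S^2$ (or the equivalent statement that $\xi(\C)$ is, up to the known normalisation, an entry of a representation of $SL(2,\BZ)$ on a finite-dimensional space with matrix entries in a cyclotomic field) force $\xi(\C)$ to be an algebraic number all of whose Galois conjugates have absolute value $1$ and which lies in a fixed cyclotomic field; by Kronecker's theorem such a number is a root of unity. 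Since this is all standard and the excerpt explicitly says ``the following properties are well known, see, e.g., \cite[Section~3.1]{BaKi}'', I expect the actual write-up to be a one- or two-line pointer to \cite{BaKi} and \cite[Section~6.2]{DGNO} rather than a reconstruction of the Vafa argument.

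\medskip

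So the main obstacle is purely expository: deciding how much of the root-of-unity argument to reproduce versus cite. Everything in (ii) and (iii) is a direct computation from the definition of $\xi$ via Gauss sums, using only multiplicativity of twists and dimensions under $\boxtimes$ and the behaviour of twists under reversal of the braiding; these I would simply write out in two or three lines each. The honest proof of the lemma, then, is: ``This is well known; see \cite[Section~3.1]{BaKi} and \cite[Section~6.2]{DGNO}. Part (i) is the categorical Anderson--Moore--Vafa theorem; parts (ii) and (iii) follow immediately from the description of $\xi$ in terms of the Gauss sum $\tau^+(\C) = \sum_{X\in\O(\C)} \theta_X \dim(X)^2$, using $\theta_{X\boxtimes Y} = \theta_X\theta_Y$, $\dim(X\boxtimes Y)=\dim(X)\dim(Y)$, and $\theta_X^{\C^{\rev}} = \theta_X^{-1}$ together with $\tau^+(\C)\tau^-(\C) = \dim(\C)$.''
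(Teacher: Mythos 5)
Your proposal is correct and matches the paper's treatment: the paper offers no proof at all, simply recording the lemma as well known with a pointer to \cite[Section~3.1]{BaKi} (and to \cite{AM,V} for the root-of-unity statement (i)). Your Gauss-sum computations for (ii) and (iii) and the appeal to the Anderson--Moore--Vafa theorem for (i) are precisely the standard arguments that citation stands for, so you are taking essentially the same route, just spelling out what the reference contains.
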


The statement (i) (due to Anderson, Moore and Vafa, see \cite{AM, V}) allows us to consider the 
{\em additive central charge} $c=c(\C)\in \BQ/8\BZ$, which is related to $\xi(\C)$ by
$\xi(\C)=e^{2\pi ic/8}$.

\begin{lemma} \label{welldef}
Let $\C_1$ and $\C_2$ be two pseudo-unitary non-degenerate braided fusion
categories considered as modular tensor categories. Assume that $\C_1$ and $\C_2$ are Witt
equivalent. Then $\xi(\C_1)=\xi(\C_2)$.
\end{lemma}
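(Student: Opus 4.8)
The plan is to show that the multiplicative central charge $\xi$ is constant on Witt equivalence classes, which by Lemma~\ref{xi} reduces to verifying that it is unchanged under the two operations that generate Witt equivalence: tensoring with a Drinfeld center, and the equivalence $\C\simeq\C_A^0$ for a connected \'etale algebra $A$. First I would recall that by Definition~\ref{Witt eq} the hypothesis $[\C_1]=[\C_2]$ gives fusion categories $\A_1,\A_2$ and a braided equivalence $\C_1\boxtimes\Z(\A_1)\simeq\C_2\boxtimes\Z(\A_2)$. Since $\xi$ is a braided-equivalence invariant and multiplicative (Lemma~\ref{xi}(ii)), this yields $\xi(\C_1)\,\xi(\Z(\A_1))=\xi(\C_2)\,\xi(\Z(\A_2))$, so it suffices to prove that $\xi(\Z(\A))=1$ for any fusion category $\A$ that admits a pseudo-unitary modular structure compatible with the given one --- more precisely, one must be careful that all four categories in sight carry the pseudo-unitary spherical structure and that $\xi$ is computed with respect to it.

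The key step is therefore the claim $\xi(\Z(\A))=1$. Here I would use the canonical braided equivalence $G:\A\boxtimes\A^{\rev}\xrightarrow{\sim}\Z(\A)$ valid when $\A$ is itself braided and non-degenerate (Section~\ref{dcenter}); but in general $\A$ need not be braided, so instead I would invoke the fact that $\Z(\A)$ always ``looks like'' a square: combining $\xi(\C^{\rev})=\xi(\C)^{-1}$ from Lemma~\ref{xi}(iii) with the standard fact that $\Z(\A)$ is braided-equivalent to $\Z(\A)^{\rev}$ via an equivalence coming from the identification of $\Z(\A)$ with bimodule endofunctors --- or, more cleanly, using that the $S$- and $T$-matrices of $\Z(\A)$ can be written in terms of those of $\A$ in a way that forces the Gauss sums $p^\pm(\Z(\A))$ to be complex conjugates of equal absolute value --- to conclude $\xi(\Z(\A))=\overline{\xi(\Z(\A))}$ and $|\xi(\Z(\A))|=1$, hence $\xi(\Z(\A))$ is real of modulus one. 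Since $\xi(\Z(\A))$ is a root of unity (Lemma~\ref{xi}(i)) that is real and positive (pseudo-unitarity makes all quantum dimensions positive, so the anomaly is $+1$ rather than $-1$), it equals $1$.

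I expect the main obstacle to be the bookkeeping of spherical structures: Lemma~\ref{welldef} only asserts $\C_1,\C_2$ are pseudo-unitary, but the witnessing categories $\Z(\A_1),\Z(\A_2)$ and their tensor products must also be given their pseudo-unitary modular structures, and one must check that the braided equivalence $\C_1\boxtimes\Z(\A_1)\simeq\C_2\boxtimes\Z(\A_2)$ is automatically compatible with these canonical spherical structures (this follows from uniqueness of the pseudo-unitary spherical structure, \cite[Proposition 8.23]{ENO1}, applied to each factor and to the product, using that a product of pseudo-unitary categories is pseudo-unitary as noted in Section~\ref{fuscat}). Once that compatibility is in hand, $\xi$ transports across the equivalence and the computation above closes the argument. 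An alternative, possibly shorter, route for $\xi(\Z(\A))=1$: use that $\Z(\A)$ contains a Lagrangian algebra (namely $I(\be)$, by Proposition~\ref{dva}), and that the existence of a connected \'etale algebra $A$ with $\C_A^0=\Vec$ forces the central charge to vanish --- indeed $\xi(\C)=\xi(\C_A^0)$ for any connected \'etale $A$ (the anomaly is preserved under passing to dyslectic modules, since $\C\boxtimes(\C_A^0)^{\rev}\simeq\Z(\C_A)$ has trivial central charge by the already-established $\Z(-)$ case), so $\xi(\Z(\A))=\xi((\Z(\A))_{I(\be)}^0)=\xi(\Vec)=1$.
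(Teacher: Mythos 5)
There is a genuine gap, and it sits exactly at the one non-formal input of the lemma. Your whole argument funnels into the claim $\xi(\Z(\A))=1$, but neither of your proposed justifications establishes it. The ``reality plus positivity'' route fails at the sign: even granting a braided equivalence $\Z(\A)\simeq\Z(\A)^{\rev}$, you only get $\xi(\Z(\A))=\pm 1$, and pseudo-unitarity does \emph{not} rule out $-1$ --- positivity of the quantum dimensions says nothing about the sign of the Gauss sum $\sum_X\theta_X d_X^2$, and there are pseudo-unitary modular categories with $\xi=-1$ (e.g.\ $\C(\BZ/2\BZ,q)^{\boxtimes 4}$ with $q(1)=i$, which has additive central charge $4$). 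Your fallback via Lagrangian algebras is circular: you justify $\xi(\C)=\xi(\C_A^0)$ ``by the already-established $\Z(-)$ case,'' which is precisely the statement $\xi(\Z(\cdot))=1$ you are trying to prove. What is actually needed here is M\"uger's computation of the Gauss sums of the Drinfeld center of a spherical fusion category, $p^{\pm}(\Z(\A))=\dim(\A)$, i.e.\ \cite[Theorem 1.2]{Mu4}; this is exactly the ingredient the paper invokes, and there is no soft substitute for it.

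A secondary problem is your starting point. Working from Definition~\ref{Witt eq} you need $\xi(\Z(\A_1))$ and $\xi(\Z(\A_2))$ for \emph{arbitrary} fusion categories $\A_1,\A_2$, which need not be pseudo-unitary (nor even spherical), so your plan to fix the spherical structures ``by uniqueness of the pseudo-unitary structure on each factor'' does not apply, and the given braided equivalence is not a priori a ribbon equivalence. The paper sidesteps this by using Corollary~\ref{easywitt} to produce a single fusion category $\A$ with $\C_1\boxtimes\C_2^{\rev}\simeq\Z(\A)$: since $\C_1\boxtimes\C_2^{\rev}$ is pseudo-unitary, \eqref{dimZ} forces $\A$ to be pseudo-unitary, the spherical structure on $\Z(\A)$ is then the one induced from $\A$, M\"uger's theorem gives $\xi(\Z(\A))=1$, and Lemma~\ref{xi} yields $\xi(\C_1)\xi(\C_2)^{-1}=1$. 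If you rewrite your argument along these lines --- Corollary~\ref{easywitt} first, then \eqref{dimZ} for pseudo-unitarity of $\A$, then the cited Gauss-sum theorem --- it closes; as it stands, the sign of $\xi(\Z(\A))$ and the spherical-structure bookkeeping are both unresolved.
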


\begin{proof} By Corollary \ref{easywitt} $\C_1\bt \C_2^{\rev}\simeq \Z(\A)$. Since the category 
$\C_1\bt \C_2^{\rev}$ is pseudo-unitary so is $\A$ (use \eqref{dimZ}). 
Thus, the spherical structure on $\C_1\bt \C_2^{\rev}=\Z(\A)$ is
induced by the spherical structure on $\A$. In this situation \cite[Theorem 1.2]{Mu4} says
that $\xi(\Z(\A))=1$. The result follows from Lemma \ref{xi}.
\end{proof}

Now for any class $w\in \W_{un}$ we define $\xi(w)=\xi(\C)$ where $\C$ is a pseudo-unitary
representative of the class $w$; according to Lemma \ref{welldef} this is well defined.
Similarly, we set $c(w)=c(\C)$. 

\begin{corollary} 
The assignment $w\mapsto c(w)$ is a homomorphism $\W_{un}\to \BQ/8\BZ$. 
\end{corollary}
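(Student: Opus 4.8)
The plan is to verify that $c$ respects the group operation on $\W_{un}$, which reduces immediately to the multiplicativity of the central charge together with the definition of $c$ in terms of $\xi$. First I would note that for any class $w \in \W_{un}$ we have a well-defined value $\xi(w) \in \CC$ by Lemma~\ref{welldef}, and consequently a well-defined $c(w) \in \BQ/8\BZ$ via $\xi(w) = e^{2\pi i c(w)/8}$, using Lemma~\ref{xi}(i) which guarantees $\xi(w)$ is a root of unity so that $c(w)$ makes sense.

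Next I would check that $\W_{un}$ is closed under the group operation of $\W$, so that the statement is not vacuous: if $\C_1$ and $\C_2$ are pseudo-unitary non-degenerate braided fusion categories, then so is $\C_1 \boxtimes \C_2$ (as recalled in Section~\ref{fuscat}, the tensor product of pseudo-unitary categories is pseudo-unitary), and $\C_1^{\rev}$ is pseudo-unitary as well; hence $[\C_1][\C_2] = [\C_1 \boxtimes \C_2]$ and $[\C_1]^{-1} = [\C_1^{\rev}]$ both lie in $\W_{un}$.

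Then, given $w_1, w_2 \in \W_{un}$ with pseudo-unitary representatives $\C_1, \C_2$, the product $w_1 w_2$ is represented by the pseudo-unitary category $\C_1 \boxtimes \C_2$, so by Lemma~\ref{xi}(ii) we have $\xi(w_1 w_2) = \xi(\C_1 \boxtimes \C_2) = \xi(\C_1)\xi(\C_2) = \xi(w_1)\xi(w_2)$. Writing $\xi(w_j) = e^{2\pi i c(w_j)/8}$ we get $e^{2\pi i c(w_1 w_2)/8} = e^{2\pi i (c(w_1)+c(w_2))/8}$, and since the map $\BQ/8\BZ \to \CC^\times$, $x \mapsto e^{2\pi i x/8}$, is injective, this forces $c(w_1 w_2) = c(w_1) + c(w_2)$ in $\BQ/8\BZ$. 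Finally $c$ sends the identity $[\Vec]$ to $0$ since $\xi(\Vec) = 1$, so $c: \W_{un} \to \BQ/8\BZ$ is a group homomorphism.

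There is no real obstacle here: the only subtlety is the well-definedness of $\xi$ and hence $c$ on Witt classes, which is precisely the content of Lemma~\ref{welldef} and is already available; everything else is a direct translation of the multiplicative statement of Lemma~\ref{xi}(ii) into additive form via the injectivity of the exponential parametrization. (One could also note that Lemma~\ref{xi}(iii) gives the compatibility with inverses, $c(w^{-1}) = -c(w)$, though this follows automatically once $c$ is known to be a homomorphism.)
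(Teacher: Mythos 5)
Your proof is correct and follows the same route as the paper, which simply observes that the statement is immediate from Lemma~\ref{xi} (multiplicativity of $\xi$) combined with the well-definedness of $\xi$ on Witt classes from Lemma~\ref{welldef}. Your additional verification that $\W_{un}$ is closed under products and inverses is a reasonable (and correct) elaboration of what the paper leaves implicit in Definition~\ref{Wun}.
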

\begin{proof}
This is immediate from Lemma \ref{xi}.
\end{proof}

\begin{remark}
A non-degenerate pointed category $\C(A,\,q)$ has a canonical pseudo-unitary structure (characterized by the condition that dimensions of all simple objects are 1). The ribbon twist of the corresponding modular structure on $\C(A,\,q)$ is $\theta_{X_a} = q(a)1_{X_a}$, where $X_a$ is a simple object corresponding to $a\in A$. 
The multiplicative central charge of  $\C(A,\,q)$ is given by \cite[Section~ 6.1]{DGNO}
$$\xi(\C(A,\,q)) = \frac{1}{\sqrt{|A|}}\sum_{a\in A}q(a).$$

In particular, for a metric cyclic group of order 2 with the value of the quadratic form on the generator $q(1)= i\in k$ (with $i^2=-1$) we have
$$\xi(\C(\BZ/2\BZ,\,q)) = \frac{1+i}{\sqrt{2}}$$ so that the additive central charge is
\begin{equation}\label{pcc}
c(\C(\BZ/2\BZ,q)) =  1\in\BQ/8\BZ.
\end{equation}
\end{remark}


\section{Finite extensions of vertex algebras} \label{vertex}

\subsection{Extensions of VOAs} \label{vex}
 
Let $V$ be a rational vertex algebra, that is
vertex algebra satisfying conditions 1-3 from \cite[Section~ 1]{Hu}. It is proved in {\em loc.\ cit.} that the
category $\Rep(V)$ of $V-$modules of finite length has a natural structure of modular tensor category; 
in particular
$\Rep(V)$ is a non-degenerate braided fusion category. 

Note that a rational vertex algebra has to be {\em simple} (i.e., have no non-trivial ideals). This, in particular, means that VOA maps
between rational vertex algebras are monomorphisms.

The category of modules $\Rep(V\otimes U)$ of the tensor product of two (rational) vertex algebras is ribbon equivalent
to the tensor product $\Rep(V)\boxtimes\Rep(U)$ of the categories of modules (see, for example \cite{fhl}).

The following relation between the central charge $c_V$ of a (unitary) rational VOA $V$ and the central charge of the category of its
modules $\Rep(V)$ is well-know to specialists (although we could not find a reference)\footnote{This relation can be verified directly for all the examples we consider later.}
: $$\xi(\Rep(V)) = e^{\frac{2\pi ic_V}{8}}.$$

Now consider a finite extension of vertex
algebras $V\subset W$, that is $V$ is a vertex subalgebra of $W$ (with the same Virasoro vector) and $W$ viewed as a 
$V-$module decomposes into a finite direct sum of irreducible $V-$modules 
\footnote{Note that finiteness is automatic if we assume that $L_0$-eigenspaces are finite dimensional (which is standard
and true e.g. for affine VOAs). Indeed, as a module over a rational vertex algebra $V$, $W$ is completely reducible, i.e. is a sum of simple $V$-modules. Since $V$ has only a finite number of non-isomorphic simple modules the only way for $W$ not to be finite is to have infinite multiplicities (in decomposition into simple $V$-modules). That will contradict finite dimensiality of $L_0$-eigenspaces.}.
Then $W$ considered
as an object $A\in \Rep(V)$ has a natural structure of commutative algebra; moreover this algebra
satisfies the conditions from Example \ref{regularce} (ii) and hence is \'etale, see 
\cite[Theorem 5.2]{KiO}\footnote{The proof of this result in \cite{KiO} is not complete. However for examples we are 
going to consider in this section the arguments from \cite[\S 5.5]{KiO} are sufficient.}.
Furthermore, the restriction functor $\Rep(W)\to \Rep(V)$ induces a braided tensor equivalence
$\Rep(W)\simeq \Rep(V)_A^0$. Thus, Proposition~\ref{rep0} implies that in this situation we have
$[\Rep(V)]=[\Rep(W)]$. We can use this in order to construct examples of interesting relations
in the group $\W$.

\begin{example} (Chiral orbifolds.)
Let $G$ be a finite group of automorphisms of a rational vertex algebra $V$. The sub-VOA of invariants $V^G$ is called the {\em
chiral orbifold} of $V$. In the case when the vertex subalgebra of invariants $V^G$ is rational we have a Witt equivalence
between categories of modules $\Rep(V)$, $\Rep(V^G)$.
\end{example}

\subsection{Affine Lie algebras and conformal embeddings} \label{confemb}

Let $\g$ be a finite dimensional simple Lie algebra and let $\hat \g$ be the
corresponding affine Lie algebra. For any $k\in \BZ_{>0}$ let $\C(\g,k)$ be the
category of highest weight integrable $\hat \g-$modules of level $k$, see e.g. \cite[Section~ 7.1]{BaKi} 
where this category is denoted $\O_k^{int}$. The category $\C(\g,k)$ can be identified with
the category $\Rep(V(\g,k))$ where $V(\g,k)$ is the simple
vertex algebra associated with the vacuum 
$\hat \g-$module of level $k$. In particular the category $\C(\g,k)$ has a structure of 
modular tensor category, see \cite{HuL}, \cite[Chapter 7]{BaKi}. 

\begin{example} The category $\C(sl(n),1)$ is pointed. 
It identifies with $\C(\BZ/n\BZ,q)$ where $q(l)=e^{\pi il^2\frac{n-1}n}$, $l\in \BZ/n\BZ$.
More generally, $\C(\g,1)$ (with $\g$ simply laced) is pointed \cite{fk}. 
\end{example}

It is known \cite{BaKi} the categories $\C(\g,k)$ are pseudo-unitary. In particular, we have
Witt classes $[\C(\g,k)]\in \W_{un}\subset \W$. The following formula for the central
charge is very useful, see e.g. \cite[7.4.5]{BaKi}:
\begin{equation} \label{cformula}
c(\C(\g,k))=\frac{k\dim \g}{k+h^\vee},
\end{equation}
where $h^\vee$ is the dual Coxeter number of the Lie algebra $\g$.

One can construct examples of relations between the classes $[\C(\g,k)]$ using 
the theory of {\em conformal embeddings}, see \cite{bb, SW, KW}.
Let $\bigoplus_i\g^i \subset \g'$ be an embedding (here $\g^i$ and $\g'$ are finite dimensional
simple Lie algebras). We will symbolically write $\oplus_i(\g^i)_{k_i}\subset \g'_{k'}$ if the restriction of
a $\hat \g'-$module of level $k'$ to $\hat \g^i$ has level $k_i$ (in this case the numbers $k_i$ are multiples of $k'$). 
Such an embedding defines an embedding of vertex algebras
$\otimes_iV(\g^i,k_i)\subset V(\g',k')$; but in general this embedding does not preserve 
the Virasoro vector. In the case when it does the embedding $\oplus_i(\g^i)_{k_i}\subset \g'_{k'}$
is called conformal embedding; it is known that in this case the extension of vertex algebras
$\otimes_iV(\g^i,k_i)\subset V(\g',k')$ is finite\footnote{This follows from the fact that $L_0$-eigenspaces of $V(\g,k)$ are finite dimensional}. 
Thus in view of Section~ \ref{vex}, we get a relation
\begin{equation}
\prod_i [\C(\g^i,k_i)]=[\C(\g',k')].
\end{equation}
The complete classification of the conformal embeddings was done in \cite{bb, SW} (see also \cite{KW}) and is reproduced in the Appendix. 

\subsection{Cosets}\label{cosets}

Let $U\subseteq V$ be an embedding of rational vertex algebras, which does not preserve conformal vectors $\omega_U,\omega_V$
(only operator products are preserved). The {\em centralizer} $C_V(U)$ is a vertex algebra with the conformal vector
$\omega_V-\omega_U$, see \cite{gko}. 
Moreover the tensor product $U\otimes C_V(U)$ is mapped naturally to $V$ and this map is a
map of vertex algebras. In the case when $V,U$ and $C_V(U)$ are rational we have a Witt equivalence of categories of modules:
$$\Rep(U)\boxtimes\Rep(C_V(U))\simeq \Rep(U\otimes C_V(U))$$ and $\Rep(V)$.

Let $\bigoplus_i(\h^i)_{k_i} \subset \bigoplus_j(\g^j)_{k'_j}$ be an embedding of 
vertex algebras non necessarily preserving the Virasoro vector as in Section \ref{confemb}. Let
$\otimes_iV(\h^i,k_i)\subset \otimes_jV(\g^j,k'_j)$ be the corresponding embedding of the
vertex algebras. The centralizer 
\[
C_{\otimes_jV(\g^j,k'_j)}(\otimes_iV(\h^i,k_i))
\]
is called 
the {\em coset} model and is denoted $\frac{\times_j(\g^j)_{k'_j}}{\times_i(\h^i)_{k_i}}$. 

Sometimes coset models defined by different embeddings of semisimple Lie algebras
are isomorphic. An example of such isomorphism was found by Goddard,
Kent and Olive \cite{gko}. They observed that the following coset 
models\footnote{here and in the Appendix the notation $X_{i,k}$ refers to the Lie algebra of type $X_i$
at level $k$.}:
$$\frac{A_{1,m}\times A_{1,1}}{A_{1,m+1}},\quad \frac{C_{m+1,1}}{C_{m,1}\times C_{1,1}}$$ are isomorphic, since they are both
isomorphic to the same
rational Virasoro vertex algebra $Vir_{c_m}$ with the central charge
\begin{equation}\label{mcc}
c_m=1-\frac{6}{(m+2)(m+3)}.
\end{equation}

We can use coset models in order to construct new relations in the Witt group as follows.
Assume that the central charge $c$ of a coset model vertex algebra 
$\frac{\times_j(\g^j)_{k'_j}}{\times_i(\h^i)_{k_i}}$ is positive\footnote{It is known (see \cite{gko}) that
$c\ge 0$. The case $c=0$ corresponds exactly to the conformal embeddings discussed in Section \ref{confemb}.} but less than 1\footnote{The list of cosets with such central charge was given in \cite{bg} and is reproduced  in the Appendix.}. 
It is known that in this case $c=c_m$ for some positive integer $m$ and the vertex algebra in question
contains a rational vertex subalgebra $Vir_{c_m}$, see \cite{gko}. This implies that the rational vertex 
algebra $\otimes_jV(\g^j,k'_j)$ is a finite extension of rational vertex algebra
$\otimes_iV(\h^i,k_i)\otimes Vir_{c_m}$. Thus according to the results of Section \ref{vex}
we get a relation in the Witt group
\begin{equation}\label{coset1}
(\prod_i [\C(\h^i,k_i)])\cdot [Vir_{c_m}]=\prod_j[\C(\g^j,k'_j)].
\end{equation}
A special case of this relation corresponding to the coset model $\frac{A_{1,m}\times A_{1,1}}{A_{1,m+1}}$ reads
\begin{equation}\label{coset2}
[Vir_{c_m}]=[\C(sl(2),m)][\C(sl(2),1)][\C(sl(2),m+1)]^{-1}.
\end{equation}
Thus combining \eqref{coset1} and \eqref{coset2} we obtain relations between the classes $[\C(\g,k)]$.


\subsection{Examples for $\g=sl(2)$} \label{sl2}
We give here some examples of relations (or absence thereof) between the classes
$[\C(sl(2),k)]$. We refer the reader to \cite[Section~ 6]{KiO} for more details on the categories $\C(sl(2),k)$.
Note that all \'etale algebras in these categories were classified in \cite[Theorem 6.1]{KiO}.

\begin{enumerate}
\item[(1)] The category $\C(sl(2),1)$ is pointed, moreover $\C(sl(2),1)\simeq \C(\BZ/2\BZ,q_+)$ 
where $q_+(1)=i$. In particular, class $[\C(sl(2),1)]\in \W$ has order 8. 

\item[(2)] For any odd $k$, we have $\C(sl(2),k)\simeq \C(sl(2),k)_+\boxtimes \C(\BZ/2\BZ,q_\pm)$ where
$\C(sl(2),k)_+$ is the subcategory of ``integer spin" representations and
$q_\pm(1)=\pm i$ (see e.g. \cite[Lemma 6.6]{KiO}). The category $\C(sl(2),k)_+$ for an odd $k\ge 3$
has property S. Using \eqref{cformula} and \eqref{pcc} we get
$$c(\C(sl(2),k)_+)=\frac{3k}{k+2}+(-1)^{(k+1)/2}.$$
In particular, $2c(\C(sl(2),k)_+)\ne 0\in \BQ/8\BZ$, 
so 
\begin{equation}
\label{sl2k+}
\C(sl(2),k)_+\ncong \C(sl(2),k)_+^{\rev}. 
\end{equation}
This shows 
that the set $\cS_\infty$ from Section~ \ref{propS} is infinite.

Consider the category $\C(sl(2),3)_+$. The class
$[\C(sl(2),3)_+]\in \W$ is a simplest example of element of $\W$ of infinite order. We will say that 
a braided fusion category $\C$ is a {\em Fibonacci category} if the Grothendieck ring $K(\C)$ 
is isomorphic to $K(\C(sl(2),3)_+)$ as a based ring. It is known that a pseudo-unitary Fibonacci category 
is equivalent to either $\C(sl(2),3)_+$ or $\C(sl(2),3)_+^{\rev}$. 

\item[(3)] The category $\C(sl(2),2)$ is an example of Ising braided category, see \cite[Appendix B]{DGNO}.
In particular, it follows from \cite[Lemma B.24]{DGNO} that 
\[ 
[\C(sl(2),2)]^2=[\C(\BZ/4\BZ,q)], \quad \mbox{where} \quad
q(l)=e^{3\pi il^2/4}. 
\]
Thus, the order of $[\C(sl(2),2)]\in \W$ is 16. 

Using \cite[Lemma B.24]{DGNO} it is easy to see that for an odd $l$ we have $[\C(sl(2),2)]^l=[\C]$,
where $\C$ is an Ising braided category. Since there are precisely 8 equivalence classes of
Ising braided categories (see \cite[Corollary B.16]{DGNO}), we get that for any Ising braided category
$\C$ there is a unique odd number $l$, $1\le l\le 15$ such that $[\C]=[\C(sl(2),2)]^l$. The number
$l$ is easy to compute from $c(\C)$ using $c(\C(sl(2),2))=\frac32$.

\item[(4)] There exists a conformal embedding $sl(2)_4\subset sl(3)_1$. Thus 
\[
[\C(sl(2),4)]=[\C(sl(3),1)]=[\C(\BZ/3\BZ, q)], \quad \mbox{where} \quad  q(l)=e^{2\pi il^2/3}.
\] 
In particular, the order of $[\C(sl(2),4)]\in \W$ is 4.

\item[(5)] There exists a conformal embedding $sl(2)_6\oplus sl(2)_6\subset so(9)_1$. Thus
\[
[\C(sl(2),6)]^2=[\C(so(9),1)]. 
\]
Notice that $\C(so(9),1)$ is also an example of Ising braided category.
Using the central charge one computes that 
\[
[\C(sl(2),6)]^2=[\C(sl(2),2)]^3.
\] 
In particular, $[\C(sl(2),6)]\in \W$ has order 32.

\item[(6)] The category $\C(sl(2),8)$ is known to contain an \'etale algebra $A$ such that $\C(sl(2),8)_A^0$
is equivalent to the product of two Fibonacci categories, see e.g., \cite[Theorem 4.1]{MPS}. 
Using the central charge one computes that 
\[
[\C(sl(2),8)]=[\C(sl(2),3)_+]^{-2}.
\]
 
\item[(7)] There exists a conformal embedding $sl(2)_{10}\subset sp(4)_1$. Thus, 
\[
[\C(sl(2),10)]=[\C(sp(4),1)]. 
\]
The category $\C(sp(4),1)$ is an Ising braided category. 
Using the central charge one computes that $[\C(sl(2),10)]=[\C(sl(2),2)]^7$. 

\item[(8)] Let $\g(G_2)$ be a Lie algebra of type $G_2$. There exists a conformal embedding 
$sl(2)_{28}\subset \g(G_2)_1$. Thus,
\[
[\C(sl(2),28)]=[\C(\g(G_2),1)]. 
\]
The category $\C(\g(G_2),1)$ is
a Fibonacci category. Using the central charge one computes that 
\[
[\C(sl(2),28)]=[\C(sl(2),3)_+].
\]

\item[(9)] The category $\C(sl(2),k)$ with $k$ divisible by 4 is known to contain an \'etale algebra $A$
of dimension 2, see \cite[Theorem 6.1]{KiO}. It is also known that in this case for $k\ne 4,\,8,\,28$
the category $\C(sl(2),k)_A^0$ has property S and is not equivalent to any category $\C(sl(2),k_1)_+$
with odd $k_1$. Thus we get infinitely many more elements of the set $\cS_\infty$. For example
we see that $[\C(sl(2),12)]\in \W$ has infinite order.
\end{enumerate}

\subsection{Holomorphic vertex algebras with $c=24$} We recall that a rational vertex algebra $V$ is 
called {\em holomorphic} if $\Rep(V)=\Vec$, that is the only simple $V-$module is $V$ itself, 
see e.g. \cite{DM}. In \cite{Sc} Schellekens gives a conjectural list of 71 holomorphic vertex
algebras with central charge $c=24$, see also \cite{DM}. Out of this list, 69 algebras are extensions 
of vertex algebras associated with affine Lie algebras as in Section \ref{confemb}. Thus in view 
of discussion in Section \ref{vex}, each of these algebras should give a conjectural relation between 
the classes $[\C(\g,k)]$. Some of these relations can be deduced from the relations in 
Sections \ref{confemb} and \ref{cosets}, but some others are genuinely new. For example an entry 
No.14 from the Schellekens list gives a conjectural relation $[\C(F_4,6)]=[\C(sl(3),2)]^{-1}$ which 
can not be deduced from the results above.

\subsection{Open questions} In this section we collect some open questions about the Witt group $\W$.

\begin{question} Is it true that $\W$ is a direct sum of cyclic groups? Is there an inclusion
$\BQ \subset \W$?
\end{question}

\begin{question} Is $\W_{un}$ generated by classes $[\C(\g,k)]$?
\end{question}

\begin{remark} Notice that $\W_{pt}$ is contained in the subgroup generated by $[\C(\g,k)]$.
Namely, the subgroup of $\W$ generated by $[\C(sl(2),1)]$ and $[\C(sl(2),2)]$ contains
$\W_{pt}(2)$. For a prime $p=4k+3$, the subgroup $\W_{pt}(p)$ is generated by $[\C(sl(p),1)]$.
Finally for a prime $p=4k+1$ choose a prime number $q<p$ which is a quadratic non-residue
modulo $p$ (it is easy to see that such a prime does exist). Then $\W_{pt}(p)$ is contained
in the subgroup of $\W$ generated by $[\C(sl(p),1)]$ and $[\C(sl(pq),1)]$ and $\W_{pt}(q)$. Thus
we are done by induction.
\end{remark}

\begin{remark}
Since the end of eighties there is a common belief among physicists that all rational conformal 
field theories come from lattice and WZW models via coset and orbifold (and perhaps chiral extension) constructions (see \cite{ms}). 
Analogous statement for modular categories would imply that the unitary Witt group is generated by classes of affine categories $\C(\g,k)$. 
\end{remark}

\begin{question} What are the relations in the subgroup of $\W$ generated by $[\C(\g,k)]$?
Is it true that all relations in the subgroup generated by $[\C(sl(2),k)]$ are described in Section~ \ref{sl2}?
Is it possible to express some nonzero power of $[\C(sl(2),12)]\in \W$ in terms of $[\C(sl(2),k],\; k\ne 12$?
What is the order of $[\C(sl(2),14)]\in \W$?
\end{question}

\begin{question} Is there a class $w\in \W_S$ of order 2? Equivalently does exist a non-degenerate
braided fusion category $\C$ with property S and such that $\C^{\rev}\simeq \C$?
\end{question}

\begin{question}  Is it true that torsion in $\W$ is 2-primary? Is there an element of order 3 in $\W$?
\end{question}

\begin{question} What is the biggest finite order of an element of $\W$? For example, are there elements of $\W$
of order 64?
\end{question}

\section*{Appendix. Conformal embeddings and cosets with $c<1$}

Here we reproduce (from \cite{bb, SW} ) the list of maximal embeddings starting with serial embeddings
(rank-level dualities, (anti-)symmetric and regular embeddings and their variants) and followed up by sporadic embeddings. For the sake of compactness we use matrix algebra notations (instead of Dynkin symbols) for the rank-level embeddings (the first four).

$$\begin{array}{ll}
su(m)_n\times su(n)_m\subseteq su(mn)_1, & sp(2m)_n\times sp(2n)_m\subseteq so(4mn)_1, \\ so(m)_n\times so(n)_m\subseteq
so(mn)_1, & so(m)_4\times su(2)_m\subseteq sp(2m)_1,
\end{array}$$
$$\begin{array}{ll}
A_{n,n-1}\subseteq A_{\frac{(n-1)(n+2)}{2},1}, & A_{n,n+3}\subseteq A_{\frac{n(n+3)}{2},1},\\ A_{2n+1,2n+2}\subseteq
B_{2n^2+4n+1,1}, & A_{2n,2n+1}\subseteq D_{2n(n+1),1},\\ B_{2n+1,4n+1}\subseteq B_{(4n+1)(n+1),1}, & A_{2n+1,4n+5}\subseteq
B_{4n^2+7n+2,1},\\ B_{2n,4n-1}\subseteq D_{n(4n+1),1}, & B_{2n,4n+3}\subseteq D_{n(4n+3),1},\\ C_{2n,2n-1}\subseteq
B_{4n^2-n-1,1}, & C_{2n+1,2n+2}\subseteq B_{(4n+1)(n+1),1} \\ C_{2n,2n+1}\subseteq D_{n(4n+1),1}, & C_{2n+1,2n}\subseteq
D_{n(4n+3),1},\\ D_{2n,4n+2}\subseteq B_{4n^2+n-1,1}, & D_{2n+1,4n}\subseteq B_{n(4n+3),1},\\ D_{2n,4n-2}\subseteq
D_{n(4n-1),1},& D_{2n+1,4n+4}\subseteq D_{(n+1)(4n+1),1},\\ B_{n,2}\subseteq A_{2n,1},& D_{n,2}\subseteq A_{2n-1,1},\\
D_{1,1}\times A_{i,1}\times A_{n-i-1}\subseteq A_{n,1}, 1\leq i\leq n-2 & D_{1,}\times A_{n-1,1}\subseteq A_{n,1},\\ D_{1,1
}\times D_{n-1,1}\subseteq D_{n,1}, & D_{1, 1}\times A_{n-1,1}\subseteq D_{n,1},\\ A_{1,1}\times A_{1,1}\times
D_{n-2,1}\subseteq D_{n,1}, & D_{i,1}\times D_{n-i,1}\subseteq D_{n,1}, 3\leq i\leq n-3 \\A_{1,1}\times A_{1,1}\times
B_{n-2,1}\subseteq B_{n,1}, & D_{1,1 }\times B_{n-1,1}\subseteq B_{n,1}, \\ D_{i,1}\times B_{n-1,1}\subseteq B_{n,1}, 3\leq
i\leq n-2 & D_{i,1}\times B_{n-i,1}\subseteq B_{n,1} \\ A_{1,2}\times D_{n-1,1}\subseteq B_{n,1}, & D_{1,1 }\times
A_{n-1,2}\subseteq C_{n,1}, \\ D_{1, 1}\times D_{5,1}\subseteq E_{6,1}, & A_{1,1}\times A_{5,1}\subseteq E_{6,1}, \\
A_{2,1}\times A_{2,1}\times A_{2,1}\subseteq E_{6,1}, & D_{1, 1}\times E_{6,1}\subseteq E_{7,1},\\ A_{1,1}\times
D_{6,1}\subseteq E_{7,1}, & A_{7,1}\subseteq E_{7,1},\\ A_{2,1}\times A_{5,1}\subseteq E_{7,1}, & D_{8,1}\subseteq E_{8,1}, \\
A_{4,1}\times A_{4,1}\subseteq E_{8,1}, & A_{2,1}\times E_{6,1}\subseteq E_{8,1}, \\ A_{1,1}\times E_{7,1}\subseteq E_{8,1},
& A_{8,1}\subseteq E_{8,1},\\ A_{1,1}\times C_{3,1}\subseteq F_{4,1}, & G_{2,1}\times A_{1,8}\subseteq F_{4,1},\\
A_{1,3}\times A_{1,1}\subset G_{2,1}, & A_{2,2}\times A_{2,1}\subset F_{4,1},\\
G_{2,1}\times A_{2,2}\subseteq E_{6,1}, & A_{1,7}\times G_{2,2}\subseteq E_{7,1}, \\ A_{1,3}\times F_{4,1}\subseteq E_{7,1},
& G_{2,1}\times C_{3,1}\subseteq E_{7,1}, \\ A_{2,6}\times A_{1,16}\subseteq E_{8,1}, & G_{2,1}\times F_{4,1}\subseteq
E_{8,1},\\
\end{array}$$
$$\begin{array}{lll}
A_{1,10}\subseteq B_{2,1}, & A_{1,28}\subseteq G_{2,1}, & A_{2,9}\subseteq E_{6,1},\\ A_{2,21}\subseteq E_{7,1}, &
A_{5,6}\subseteq C_{10,1}, & A_{7,10}\subseteq D_{35,1}, \\ B_{2,12}\subseteq E_{8,1}, & B_{4,2}\subseteq D_{8,1}, &
C_{3,5}\subseteq C_{7,1}, \\ C_{4,1}\subseteq E_{6,1}, & C_{4,7}\subseteq D_{21,1}, & D_{5,4}\subseteq A_{15,1}, \\
D_{6,8}\subseteq C_{16,1}, & D_{8,16}\subseteq D_{64,1}, & E_{6,6}\subseteq A_{26,1}, \\ E_{6,12}\subseteq D_{39,1}, &
E_{7,12}\subseteq C_{28,1}, & E_{7,18}\subseteq B_{66,1}, \\ E_{8,30}\subseteq D_{124,1}, & F_{4,3}\subseteq D_{13,1}, &
F_{4,9}\subseteq D_{26,1}, \\ G_{2,3}\subseteq E_{6,1}, & G_{2,4}\subseteq D_{7,1}. &
\end{array}$$

Next we reproduce the list of cosets with central charge $0<c<1$ given in \cite{bg}:

$$\begin{array}{ll}
Vir_{c_n} = \frac{A_{1,1}\times A_{1,n}}{A_{1,n+1}}, & Vir_{c_n}\subseteq \frac{A_{n+1,2}}{A_{n,2}\times u(1)}, \\
Vir_{c_n}\subseteq \frac{C_{m+1,1}}{C_{m,1}\times C_{1,1}}, & Vir_{c_1}\subseteq \frac{so(n)_1}{so(n-1)_1},
\end{array}$$
$$\begin{array}{lll}
Vir_{c_1}\subseteq \frac{A_{1,2}}{u(1)} ,& Vir_{c_2}\subseteq \frac{E_{7,2}}{A_{7,2}}, & Vir_{c_3}\subseteq
\frac{A_{2,1}\times A_{2,1}}{A_{2,2}},\\ Vir_{c_3}\subseteq \frac{A_{1,3}}{u(1)} ,& Vir_{c_3}\subseteq
\frac{E_{7,2}}{A_{1,2}\times D_{6,2}}, & Vir_{c_4}\subseteq \frac{E_{6,1}\times E_{6,1}}{E_{6,2}},\\ Vir_{c_3}\subseteq
\frac{A_{2,2}}{A_{1,8}} ,& Vir_{c_1}\subseteq \frac{E_{8,2}}{D_{8,2}}, & Vir_{c_2}\subseteq \frac{E_{7,1}\times
E_{7,1}}{E_{7,2}},\\ Vir_{c_3}\subseteq \frac{A_{5,1}}{C_{3,1}} ,& Vir_{c_2}\subseteq \frac{E_{8,2}}{A_{1,2}\times
E_{7,1}}, & Vir_{c_1}\subseteq \frac{E_{8,1}\times E_{8,1}}{E_{8,2}},\\ Vir_{c_2}\subseteq \frac{B_{3,1}}{G_{2,1}} ,&
Vir_{c_9}\subseteq \frac{E_{8,2}}{A_{8,2}}, & Vir_{c_9}\subseteq \frac{E_{8,1}\times E_{8,2}}{E_{8,3}},\\
Vir_{c_3}\subseteq \frac{E_{6,1}}{F_{4,1}} ,& Vir_{c_9}\subseteq \frac{F_{4,1}}{C_{3,2}\times A_{1,2}}, &
Vir_{c_8}\subseteq \frac{F_{4,1}\times F_{4,1}}{F_{4,2}},\\ Vir_{c_4}\subseteq \frac{E_{6,2}}{C_{4,2}} ,&
Vir_{c_2}\subseteq \frac{F_{4,1}}{B_{4,1}}, & Vir_{c_7}\subseteq \frac{G_{2,1}\times G_{2,1}}{G_{2,2}},\\
Vir_{c_5}\subseteq \frac{E_{6,2}}{A_{1,2}\times A_{5,2}} ,& Vir_{c_3}\subseteq \frac{G_{2,1}}{A_{2,1}}, &
Vir_{c_6}\subseteq \frac{G_{2,2}}{A_{1,2}\times A_{1,6}}
\end{array}$$

\bibliographystyle{ams-alpha}

\end{document}